\theoremstyle{plain}
\newtheorem{theorem}{Theorem}[section]
\newtheorem{lemma}[theorem]{Lemma}
\newtheorem{proposition}[theorem]{Proposition}
\newtheorem{corollary}[theorem]{Corollary}
\newcommand{\p}{\mathbb{P}}
\newcommand{\E}{\mathbb{E}}
\newcommand{\br}[1]{\left( #1 \right)}
\newcommand{\sbr}[1]{\left[ #1 \right]}
\newcommand{\pbr}[1]{\p\left( #1 \right)}
\newcommand{\ebr}[1]{\exp\left( #1 \right)}
\newcommand{\abs}[1]{\left| #1 \right|}
\newcommand{\mathr}{\mathbb{R}}
\newcommand{\indic}[1]{{\mathbb{I}\left\{{#1}\right\}}}
\newcommand{\iprod}[2]{\left \langle #1, #2 \right\rangle}
\newcommand{\norm}[1]{\left\|{#1} \right\|}
\newcommand{\opnorm}[1]{\norm{#1}_{\rm op}}
\newcommand{\fnorm}[1]{\norm{#1}_{\rm F}}
\newcommand{\argmin}{\mathop{\rm argmin}}
\newcommand{\argmax}{\mathop{\rm argmax}}
\newcommand{\mathcleq}{\mathc_{\leq 1}}
\newcommand{\mathc}{\mathbb{C}}
\newcommand{\wh}{\hat}
\newcommand{\Tr}{\mathop{\rm Tr}}
\def\H{{ \mathrm{\scriptscriptstyle H} }}
\newcommand{\MLE}{\textsc{MLE}}
\newcommand{\SDP}{\textsc{SDP}}
\newcommand{\BM}{\textsc{BM}}
\renewcommand{\bar}[1]{\overline{#1}}
\newcommand{\re}{{\rm Re}}
\numberwithin{equation}{section}
\newcommand\numberthis{\addtocounter{equation}{1}\tag{\theequation}}
\newcommand{\xdownarrow}[1]{%
  {\left\updownarrow\vbox to #1{}\right.\kern-\nulldelimiterspace}
}
\title{Tightness of SDP and Burer-Monteiro Factorization for Phase Synchronization in High-Noise Regime}
\author{Anderson Ye Zhang\\
~\\
University of Pennsylvania
}
\date{}
\begin{document}
\maketitle

\begin{abstract}
We study the difference between the maximum likelihood estimation (MLE) and its semi-definite programming (SDP) relaxation for the phase synchronization problem, where $n$ latent phases are estimated based on pairwise observations corrupted by Gaussian noise at a level $\sigma$. 
While previous studies have established that SDP coincides with the MLE when $\sigma \lesssim \sqrt{n / \log n}$, the behavior in the high-noise regime $\sigma \gtrsim \sqrt{n / \log n}$ remains unclear.
We address this gap by quantifying the deviation between the SDP and the MLE in the high-noise regime as $\exp(-c \frac{n}{\sigma^2})$, indicating an exponentially small discrepancy. In fact, we establish more general results for the Burer-Monteiro (BM) factorization that covers the SDP as a special case: it has the exponentially small deviation from the MLE in the high-noise regime and coincides with the MLE when $\sigma$ is small. To obtain our results, we develop a refined entrywise analysis of the MLE that is beyond  the existing $\ell_\infty$ analysis in literature.
\end{abstract}

\section{Introduction}
In this paper, we study the phase synchronization problem \cite{singer2011angular,bandeira2017tightness,abbe2017group,zhong2018near}. Let $z_1^*,\ldots,z_n^*\in\mathc_1$ be latent parameters where $\mathc_1=\{x\in\mathc:|x|=1\}$ includes all unit complex numbers. That is, each $z^*_j$ represents an angle in $[0,2\pi)$ or a phase. The observations are 
\begin{align}\label{eqn:phase_model}
Y_{jk} = z_j^*\bar{z_k^*} + \sigma W_{jk},\quad 1\leq j<k\leq n,
\end{align}
where $\sigma>0$ is the noise level and $\{W_{jk}\}_{1\leq j<k\leq n}\in\mathc$ are the additive noises following the standard complex Gaussian distribution independently. This model can be conveniently expressed in matrix form.
By defining  $Y_{jj}=1,W_{jj}=0$ for all $j\in[n]$ and $Y_{kj}=\bar{Y_{jk}},W_{kj}=\bar{W_{jk}}$ for all $1\leq j<k\leq n$, we can rewrite the model as
\begin{align}\label{eqn:phase_model_matrix}
Y = z^*(z^*)^\H + \sigma W\in\mathc^{n\times n},
\end{align}
where $z^*\in\mathc_1^n$ with coordinates $z^*_1,\ldots,z^*_n$. The goal is to estimate the latent vector $z^*$ from the observed matrix $Y$.

To solve the phase synchronization problem, one natural approach is to use maximum likelihood estimation (MLE) \cite{gao2021exact, zhong2018near}. The MLE can be formulated as the following optimization problem:
\begin{align}\label{eqn:MLE_def}
\hat z^{\MLE} = \argmax_{z\in\mathc_1^n} \iprod{ Y}{zz^\H}. 
\end{align}
However, this optimization is over a non-convex set $\mathc_1^n$, making it computationally challenging. To overcome this computational difficulty, note that $\hat z^\MLE$ satisfies
\begin{align}\label{eqn:MLE_def_2}
\hat z^{\MLE}(\hat z^{\MLE})^\H =\argmax_{Z\in\mathc^{n\times n}: Z=Z^\H, \text{rank}(Z)=1, Z_{jj}=1,\forall j\in[n]} \iprod{Y}{Z},
\end{align}
which can be relaxed into a convex problem using semi-definite programming (SDP) \cite{arie2012global,singer2011three, ling2020solving, fan2021joint, wang2013exact, gao2022sdp, javanmard2016phase}:
\begin{align}\label{eqn:SDP_def}
\hat Z^{\SDP} = \argmax_{Z\in\mathc^{n\times n}:Z=Z^\H,Z\succeq 0,Z_{jj}=1,\forall j\in[n]} \iprod{Y}{Z}.
\end{align}
Here, the optimization is over all $n$-by-$n$ positive semi-definite complex matrices with diagonal entries equal to 1, which forms a convex set. Compared to (\ref{eqn:MLE_def_2}), the formulation in (\ref{eqn:SDP_def})  relaxes the rank constraint $\text{rank}(Z)=1$ to  $Z\succeq 0$, thus making the feasible set convex.



 
 While the SDP offers computational convenience, its solution is not guaranteed to be $\hat{z}^{\text{MLE}} (\hat{z}^{\text{MLE}})^\H$. A crucial question is how $\hat{Z}^{\text{SDP}}$ differs from $\hat{z}^{\text{MLE}} (\hat{z}^{\text{MLE}})^\H$. If they coincide, the SDP relaxation is considered tight in the literature.  \cite{bandeira2017tightness} demonstrates that the SDP is tight when $\sigma \lesssim n^{1/4}$. This result is further refined by \cite{zhong2018near}, which shows that the SDP is tight when $\sigma \lesssim \sqrt{n / \log n}$. However, the behavior of the SDP when $\sigma \gtrsim \sqrt{n / \log n}$, referred to as the high-noise regime in this paper, remains unclear. This motivates us to address the following question:

~\\
{\centering \emph{Question 1: How does the SDP differ from the MLE in the high-noise regime where $\sigma \gtrsim \sqrt{n/\log n}$?}}

~\\
\indent In addition to the SDP relaxation, in recent years, the Burer-Monteiro (BM) factorization \cite{burer2003nonlinear, burer2005local, boumal2016non, bandeira2016low, mcrae2024benign, ling2023local} has drawn increasing attention.  For any $m \in \mathbb{N}$, the BM factorization solves the following optimization problem:
\begin{align}\label{eqn:BM_m_def}
\hat Z^{\BM,m} =   \argmax_{Z\in\mathc^{n\times n}:,Z=Z^\H,\text{rank}(Z) \leq m,Z\succeq 0,Z_{jj}=1,\forall j\in[n]} \iprod{Y}{Z}.
\end{align}
Compared to the SDP, the BM factorization imposes an additional rank constraint. Note that when $m=1$, the feasible set of the BM factorization is the set of all rank-1 Hermitian matrices with the non-zero eigenvalue being 1. Hence, the BM factorization is equivalent the MLE in the sense that $$\hat Z^{\BM,1} = \hat z^\MLE (\hat z^\MLE)^\H.$$ When $m\geq n$, the rank constrain is not effective, and the BM factorization is equivalent to the SDP. 
As a result, the BM factorization can be seen as a more conservative relaxation of the MLE compared to the SDP when $m<n$. In addition, the SDP can be seen as a special case of the BM factorization, such that
\begin{align*}
\hat Z^{\BM, n} = \hat Z^{\SDP}.
\end{align*}
With the SDP seen as a special case of the BM factorization,  the question posed above about the SDP can be further generalized:

~\\
{\centering \emph{Question 2: How does the BM factorization differ from the MLE?}}
~\\

Note that the difference between the BM factorization and the MLE can be quantified by the following normalized squared Frobenius norm: $n^{-2}\|\hat Z^{\BM,m} - \hat z^{\MLE}(\hat z^{\MLE})^\H\|_{\rm F}^2$,  between $\hat Z^{\BM,m}$ and $\hat z^{\MLE}(\hat z^{\MLE})^\H$. Since both are $n\times n$ matrices with entries having modulus at most 1, the quantity is between 0 and 4. Hence, to address Question 2, we aim to establish an upper bound for $n^{-2}\|\hat Z^{\BM,m} - \hat z^{\MLE}(\hat z^{\MLE})^\H\|_{\rm F}^2$.  If it is equal to 0, then  $\hat Z^{\BM,m}=\hat z^{\MLE}(\hat z^{\MLE})^\H$ in which case we can say the BM factorization is tight.

The main results of this paper are presented below in Theorem \ref{thm:intro2}.

\begin{theorem}\label{thm:intro2}
Suppose $m\in\mathbb{N}\setminus\{1\}$.
\begin{enumerate}
\item There exists some absolute constant $C>0$ such that the following hold:
\begin{align}\label{eqn:intro2_1}
\E \br{\frac{1}{n^2}\fnorm{\hat Z^{\BM,m} - \hat z^{\MLE} \br{\hat z^{\MLE}}^\H}^2}& \leq C\ebr{-\frac{n}{8\sigma^2}} + 2n^{-10}.
\end{align}
\item There exists some absolute constant $C'>0$ such that if $\sigma\leq \min\{C'\sqrt{n}, \sqrt{n/(9\log n)}\}$, then the following holds  with high probability:
\begin{align*}
\hat Z^{\BM,m} = \hat z^{\MLE} \br{\hat z^{\MLE}}^\H. 
\end{align*}
\end{enumerate}
\end{theorem}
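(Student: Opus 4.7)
The overall strategy is to exploit the sandwich structure of the BM factorization between the MLE rank-one matrix $\hat z^\MLE (\hat z^\MLE)^\H$ and the SDP solution $\hat Z^\SDP$, and then to build a near-tight dual certificate from the first-order conditions of the MLE. Since $m \geq 2$, the rank-one matrix $\hat z^\MLE (\hat z^\MLE)^\H$ is feasible for the BM program, so by optimality of $\hat Z^{\BM,m}$ combined with the feasible-set inclusion BM $\subseteq$ SDP one obtains
\begin{align*}
\iprod{Y}{\hat z^\MLE (\hat z^\MLE)^\H} \;\leq\; \iprod{Y}{\hat Z^{\BM,m}} \;\leq\; \iprod{Y}{\hat Z^\SDP}.
\end{align*}
Hence it suffices to control the SDP-vs-MLE objective gap and then translate it into a primal Frobenius norm bound that any $\hat Z^{\BM,m}$ sandwiched in this way must satisfy.

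For the dual certificate I would take the diagonal matrix $S$ with $S_{jj} = \re(\overline{\hat z^\MLE_j} (Y\hat z^\MLE)_j)$, which arises from the MLE first-order condition $Y \hat z^\MLE = S \hat z^\MLE$. Since $|\hat z^\MLE_j|=1$ gives $\iprod{Y}{\hat z^\MLE (\hat z^\MLE)^\H} = \Tr(S)$, shifting $S$ by $\epsilon I$ with $\epsilon = \max(0,-\lambda_{\min}(S-Y))$ yields a dual-feasible point, and weak SDP duality produces
\begin{align*}
\iprod{Y}{\hat Z^\SDP - \hat z^\MLE (\hat z^\MLE)^\H} \;\leq\; n \cdot \max\bigl(0,\,-\lambda_{\min}(S-Y)\bigr).
\end{align*}
For Part 2, the goal is to show $S - Y \succeq 0$ with high probability when $\sigma \lesssim \sqrt{n/\log n}$, so that complementary slackness together with the rank-one range of the only slack direction forces $\hat Z^\SDP = \hat z^\MLE(\hat z^\MLE)^\H$ exactly, and the sandwich then gives $\hat Z^{\BM,m} = \hat z^\MLE(\hat z^\MLE)^\H$; this is in the spirit of Bandeira et al.\ and Zhong--Boumal but requires sharper entrywise control on $\hat z^\MLE - z^*$. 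For Part 1, the aim is a sharp estimate $-\lambda_{\min}(S-Y) \lesssim \exp(-n/(c\sigma^2))$ on a good event $\mathcal{E}$ with $\p(\mathcal{E}^c) \leq n^{-10}$, obtained by combining Gaussian concentration for $W$ with the refined entrywise analysis of $\hat z^\MLE$.

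To convert the objective gap into a Frobenius norm bound, I would decompose $\hat Z^{\BM,m} = \alpha\, \hat z^\MLE (\hat z^\MLE)^\H / n + R$ with $R$ trace-orthogonal to $\hat z^\MLE(\hat z^\MLE)^\H$. The PSD cone and unit-diagonal constraints pin down $\alpha$ and the trace of $R$, while the positive eigenvalues of $S-Y$ in the subspace orthogonal to $\hat z^\MLE$ supply a restricted strong-concavity inequality that bounds $\fnorm{R}^2$ by the objective gap divided by a spectral-gap constant. Combined with the preceding estimate this should yield $n^{-2} \fnorm{\hat Z^{\BM,m} - \hat z^\MLE(\hat z^\MLE)^\H}^2 \lesssim \exp(-n/(8\sigma^2))$ on $\mathcal{E}$, and Part 1 follows by adding the deterministic bound of $4$ on $\mathcal{E}^c$ weighted by $\p(\mathcal{E}^c) \leq n^{-10}$.

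The main obstacle is the refined entrywise analysis of $\hat z^\MLE$. A standard $\ell_\infty$ bound on $\hat z^\MLE - z^*$, of the order $\sigma\sqrt{\log n/n}$, is far too crude to produce the claimed $\exp(-n/(8\sigma^2))$ rate for $-\lambda_{\min}(S-Y)$; what is needed is a sharp per-coordinate distributional estimate that isolates the Gaussian contribution from each row of $W$, presumably via a leave-one-out decomposition, so that large-deviation estimates can be applied at the $\exp(-n/\sigma^2)$ scale. This is presumably the contribution announced in the abstract as going ``beyond the existing $\ell_\infty$ analysis''. A secondary difficulty is arranging the objective-to-Frobenius translation so that it respects this exponential rate rather than diluting it by polynomial factors, which forces the argument to use the PSD and unit-diagonal structure of $\hat Z^{\BM,m}$ simultaneously rather than through separate operator-norm bounds.
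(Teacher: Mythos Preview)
Your approach diverges sharply from the paper's, and for Part~1 it has a genuine gap.

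\textbf{What the paper does instead.} The paper never uses the SDP sandwich or a dual certificate. It works entirely in the primal, exploiting that $\hat z^{\MLE}$ and $\hat V^{\BM,m}$ are fixed points of the column-normalization maps $F_1$ and $F_m$ respectively. A contraction-type inequality for the pair $(F_1,F_m)$ (Theorem~\ref{thm:fixed_points}) yields directly
\[
\ell_m(\hat V^{\BM,m},\hat z^{\MLE}) \;\leq\; \frac{8}{n}\sum_{j\in[n]}\indic{\,|[Y\hat z^{\MLE}]_j|<\delta n\,},
\]
i.e.\ the distance is bounded by the \emph{fraction} of coordinates where $S_{jj}=|[Y\hat z^{\MLE}]_j|$ is small. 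The rest of the paper (Section~\ref{sec:3}) controls this fraction in expectation by replacing $F_1$ with a Lipschitz surrogate $G$ and running a leave-one-out argument on the fixed points of $G$, giving $\E[\text{fraction}]\lesssim \exp(-n/(8\sigma^2))$.

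\textbf{Where your Part~1 argument breaks.} First, the assertion ``$-\lambda_{\min}(S-Y)\lesssim \exp(-n/(c\sigma^2))$ on a good event'' is not the right formulation: either $S-Y\succeq 0$ (and you are done), or some $S_{jj}$ is anomalously small, in which case $-\lambda_{\min}(S-Y)$ is of \emph{polynomial} size in $n$, not exponentially small. What is exponentially small is the \emph{probability} of that event, not the magnitude. Second, and more seriously, your objective-to-Frobenius conversion relies on ``positive eigenvalues of $S-Y$ in the subspace orthogonal to $\hat z^{\MLE}$'' supplying a strong-concavity constant. But on the bad event where some $S_{jj}$ is small, the second-smallest eigenvalue of $S-Y$ is itself small or negative, so the curvature you need is absent precisely when you need it. If you fall back on the trivial bound $n^{-2}\fnorm{D}^2\le 4$ on the bad event and weight by its probability (which is $\lesssim n\exp(-cn/\sigma^2)$ after a union bound over coordinates), you pick up an extra factor of $n$ and do not reach $C\exp(-n/(8\sigma^2))$. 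The paper avoids this entirely because its fixed-point bound is already proportional to the \emph{number} of bad coordinates rather than to an all-or-nothing eigenvalue gap; one bad coordinate contributes only $O(1/n)$ to $\ell_m$, and the expected count of bad coordinates is what is shown to be $\lesssim n\exp(-n/(8\sigma^2))$.

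\textbf{Part~2.} Your dual-certificate route for exact tightness is essentially the Zhong--Boumal argument and is sound; the paper obtains the same conclusion as a by-product of its fixed-point framework (choosing $h=\delta\sqrt{n}$ in Lemma~\ref{lem:deterministic} so the additive term vanishes and then showing all indicators are zero with high probability), but your route is a legitimate alternative here.
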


Theorem \ref{thm:intro2} first provides an upper bound (\ref{eqn:intro2_1}) for  the expected value of $n^{-2}\|\hat Z^{\BM,m} - \hat z^{\MLE}(\hat z^{\MLE})^\H\|_{\rm F}^2$. Note that $n^{-2}\|\hat Z^{\BM,m} - \hat z^{\MLE}(\hat z^{\MLE})^\H\|_{\rm F}^2$ is  random  as both $\hat Z^{\BM,m}$ and  $\hat z^\MLE$ depend on the random Gaussian noises $\{W_{jk}\}_{1\leq j<k\leq n}$. Therefore, we take the expectation to obtain a deterministic upper bound. The upper bound comprises two terms. The first term has an exponential form with $\frac{n}{\sigma^2}$ in the exponent, which can be understood as the signal-to-noise ratio. 
The second term $n^{-10}$ arises from a high-probability event controlling $\|W\|$ and can be made arbitrarily smaller, thus considered negligible compared to the first term. Ignoring the second term, the bound indicates the difference between $\hat Z^{\BM,m} $ and $\hat z^{\MLE}(\hat z^{\MLE})^\H$ is exponentially small.

To better understand the exponential error term $\ebr{-\frac{n}{8\sigma^2}}$ in (\ref{eqn:intro2_1}), particularly its magnitude, we compare it with the distances to the ground truth $z^*(z^*)^\H$. When $m=n$, the BM factorization becomes the SDP. \cite{gao2022sdp} shows that $n^{-2}\|\hat Z^{\SDP} -  z^{*}( z^{*})^\H\|_{\rm F}^2$, the difference between the SDP and the ground truth, is of the order $\frac{\sigma^2}{n}$. A similar result is established in \cite{gao2021exact} for the MLE. In addition, \cite{gao2021exact} demonstrates that $\frac{\sigma^2}{n}$ is the minimax rate for the estimation of $z^*(z^*)^\H$ in the phase synchronization, implying that no estimator can achieve an error much smaller than $\frac{\sigma^2}{n}$. The left panel of Figure \ref{fig:intro} visualizes the geometric relationship among these quantities. Note that $\ebr{-\frac{n}{8\sigma^2}}$ is much smaller than $\frac{\sigma^2}{n}$, especially when $\frac{n}{\sigma^2}$ is large. This reveals that while $\hat Z^\SDP$ and $\hat z^\MLE(\hat z^\MLE)^\H$ are distant from $z^*(z^*)^\H$, they are relatively close to each other, indication that relaxing  the feasible set in (\ref{eqn:MLE_def_2}) to that in (\ref{eqn:BM_m_def}) only slightly alters the solution.


\begin{figure}[h!]
    \centering
    \begin{minipage}{.45\textwidth}
        \centering
        \includegraphics[width=\textwidth]{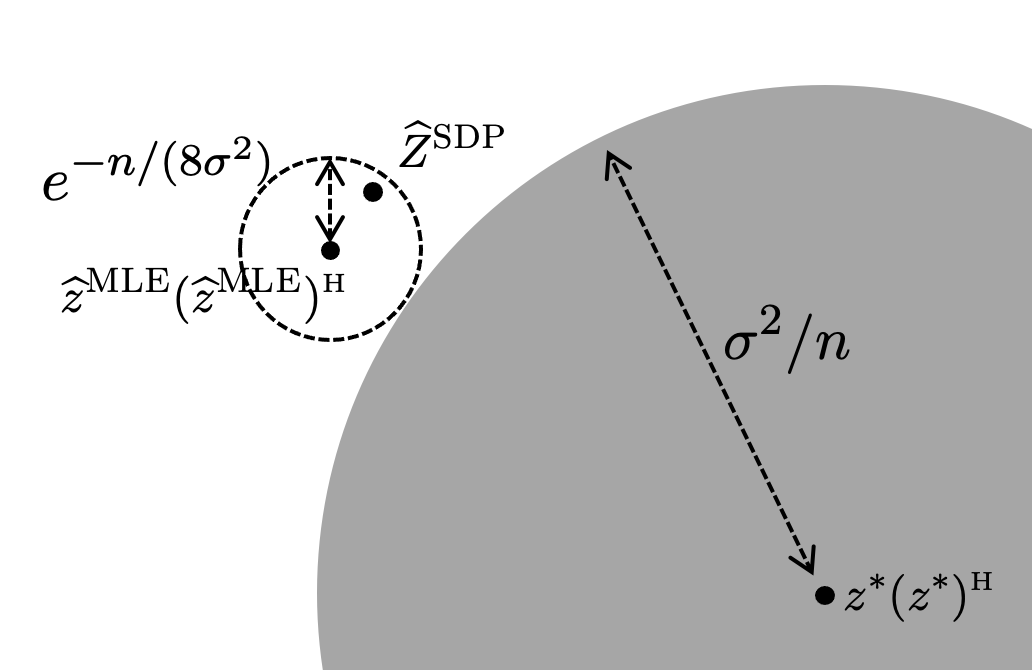}
    \end{minipage}\quad
    \begin{minipage}{0.5\textwidth}
        \centering
        \includegraphics[width=\textwidth]{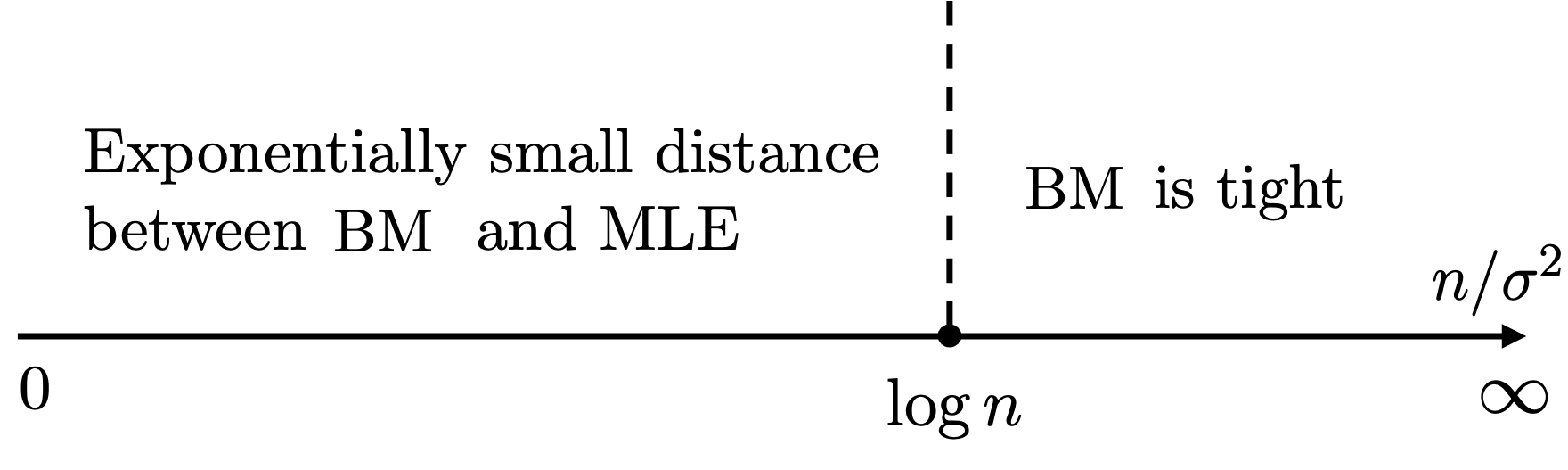}
    \end{minipage}
    \caption{Left: A visualization of the geometric relationship among the SDP, the MLE, and the ground truth. Right: Summary of Theorem \ref{thm:intro2}: The distance between the BM factorization and the MLE decays exponentially as $\frac{n}{\sigma^2}$ increases. The distance becomes 0, indicating tightness, when $\frac{n}{\sigma^2}\gtrsim \log n$.
     \label{fig:intro}}
\end{figure}

While (\ref{eqn:intro2_1}) holds for any noise level $\sigma$, Theorem \ref{thm:intro2} further provides a high-probability result for the tightness of the BM factorization. When $n$ is large enough, a sufficient condition is $\sigma\leq \sqrt{n/(9\log n)}$. Compared to the condition $\sigma \lesssim \sqrt{n/\log n}$ for the SDP in the existing literature \cite{zhong2018near}, our result  holds for any BM factorization, including the SDP.


In summary, Theorem \ref{thm:intro2} offers comprehensive answers to the two questions posed above. Theorem \ref{thm:intro2} directly addresses Question 2, demonstrating how the BM factorization differs from the MLE as $\frac{n}{\sigma^2}$ increases (see the right panel of Figure \ref{fig:intro} for an illustration). For Question 1, since the SDP is a special case of the BM factorization, our results indicate that the SDP differs from the MLE with an exponentially small error in the high-noise regime $\sigma \gtrsim \sqrt{n / \log n}$.


To establish Theorem \ref{thm:intro2}, our analysis is beyond that of \cite{zhong2018near}. \cite{zhong2018near} connects the SDP and the MLE through a dual certificate. Instead, we leverage a property that both the BM factorization and the MLE are fixed points of certain mappings $F_m$ and $F_1$, respectively (see Section \ref{sec:fixed_points} for their definitions). The fixed-point property applies not just to the SDP but also the BM factorization, allowing us to establish a general framework for the BM factorization that includes the SDP as a special case. More importantly, this enables us to first establish contraction-type results for these mappings, ultimately showing that the difference between the BM factorization and the MLE can be upper bounded by a quantity of the MLE (see Corollary \ref{cor:fixed_points_MLE} for more details). As a result, the remaining proof is about analyzing the MLE. While \cite{zhong2018near} investigates a similar quantity when $\sigma \lesssim \sqrt{n/\log n}$ by developing an $\ell_\infty$ norm analysis for the MLE, their approach no longer works in the high-noise regime $\sigma \gtrsim \sqrt{n/\log n}$, due to the fact that the mapping $F_1$ involves entrywise normalization, which poses analytical challenges.  To address this, our strategy is to replace $F_1$ with a Lipschitz mapping whose fixed points closely approximate $\hat z^\MLE$ and are relatively easier to analyze. This novel strategy, together with the leave-one-out technique developed in \cite{zhong2018near}, allows us to establish the desired exponential bound presented in Theorem \ref{thm:intro2}.

~\\
\emph{Related Literature.} Recent years have seen a surge of interest in SDP relaxations for tackling various non-convex optimization problems with underlying low-rank structures. This includes applications in community detection \cite{hajek2016achieving, fei2020achieving}, clustering \cite{giraud2019partial, fei2022hidden}, matrix completion \cite{candes2012exact}, and phase retrieval \cite{waldspurger2015phase}. Among these, phase synchronization is particularly notable for its relatively straightforward structure, making it a prime candidate for in-depth study.

The SDP in phase synchronization can be studied from several perspectives. From a statistical standpoint, as explored in \cite{javanmard2016phase, gao2022sdp}, the focus is on using SDP to estimate the true structure $z^*(z^*)^\H$ and to evaluate its estimation error and statistical optimality. This involves analyzing the distance between $\hat{Z}^\SDP$ and the true $z^*(z^*)^\H$. Conversely, studies such as \cite{mei2017solving, bandeira2017tightness, zhong2018near} concentrate on the tightness of the SDP—specifically, the discrepancy between $\hat{Z}^\SDP$ and $\hat{z}^\MLE(\hat{z}^\MLE)^\H$, essentially quantifying the cost of transforming a non-convex optimization problem into a convex one through SDP relaxation. \cite{mei2017solving} examines the differences between the objective function values, $\langle Y, \hat{Z}^\SDP \rangle$ and $\langle Y, \hat{z}^\MLE(\hat{z}^\MLE)^\H \rangle$, while \cite{bandeira2017tightness, zhong2018near} demonstrates that $\hat{Z}^\SDP$ equals $\hat{z}^\MLE(\hat{z}^\MLE)^\H$ under low noise conditions. These works have inspired further investigations, such as those by \cite{ling2022improved, ling2021near}, into the tightness of SDP in related problems, like orthogonal group synchronization and the generalized orthogonal Procrustes problem.
Our work extends the line of research initiated by \cite{zhong2018near} by focusing on the tightness of the SDP in the high-noise regime, an area not extensively covered by existing research. By doing so, we contribute to filling a crucial gap in understanding the limits and power of SDP relaxations under more challenging conditions. While our results are limited to the phase synchronization, they could  potentially be extended to other synchronization problems such as the orthogonal group synchronization.

While SDP is a convex optimization approach solvable in polynomial time, its scalability issues have prompted the exploration of alternatives such as the BM factorization \cite{burer2003nonlinear}. Despite its non-convex nature, BM factorization often exhibits surprisingly good performance when applied through local optimization algorithms. This observation has spurred a series of investigations into the conditions under which SDP and BM factorization yield equivalent optima \cite{burer2005local, boumal2016non, boumal2019global}. Furthermore, studies such as \cite{mcrae2024benign, ling2023local, mcrae2024nonconvex, endor2024benign} examine the landscape and benignness of the BM factorization's optimization process.
Unlike much of the existing literature that focuses on comparing BM factorization directly with SDP, our work considers both SDP and BM factorization as relaxations of the MLE. Therefore, we explore the differences between BM factorization and MLE, rather than between BM factorization and SDP.

Regarding the statistical properties of phase synchronization, \cite{gao2021exact} establishes the minimax rate of $\frac{\sigma^2}{n}$, demonstrating its attainability via the MLE and a generalized power method. Subsequent research by \cite{gao2022sdp, zhang2024exact} confirms that both the SDP and the eigenvector method \cite{singer2011angular} are minimax optimal. Phase synchronization serves as a specific instance within the broader framework of group synchronization problems \cite{abbe2017group}, where the elements $\{z^*_j\}_{j\in[n]}$ belong to various groups. The performance of several algorithms, including those mentioned, has also been studied in different synchronization settings such as $\mathbb{Z}_2$ synchronization \cite{gao2022sdp} and orthogonal group synchronization \cite{ling2022improved, zhang2024exact}.



~\\
\emph{Organization.} In Section \ref{sec:2}, we conduct a deterministic analysis of the difference between the MLE and the BM factorization using their fixed-point properties. Section \ref{sec:3} focuses on analyzing the MLE. 
We include proofs of main results in Section \ref{sec:4}.
Due to the page limit, proofs of remaining lemmas are included in the supplementary material.


~\\
\emph{Notations.} Define $\mathbb{N}=\{1,2,3,\ldots\}$ as the set of natural numbers. For any positive integer $n$, we write $[n]=\{1,2,\ldots, n\}$ and denote $I_n$ as the $n\times n$ identity matrix. For a complex number $x\in\mathc$, we use $\bar x$ for its complex conjugate, $\re(x)$ for its real part, and $|x|$ for its modulus.  
Define $\mathcleq = \{x\in\mathc:|x|\leq 1\}$ as the set of complex numbers whose modulus are at most 1.  For a complex vector $x=(x_j)\in\mathbb{C}^d$, we denote $\|x\|=({\sum_{j=1}^d |x_j|^2})^{1/2}$ as its Euclidean norm. 
 For a complex matrix $B =(B_{jk})\in\mathbb{C}^{d_1\times d_2}$, we  use $B^\H \in\mathbb{C}^{d_2\times d_1}$ for its conjugate transpose such that $(B^{\H})_{jk}= \overline{B_{kj}}$.  The Frobenius norm and the operator norm of $B$ are defined by $\fnorm{B}:=({\sum_{j=1}^{d_1}\sum_{k=1}^{d_2}|B_{jk}|^2})^{1/2}$ and $\norm{B} := \sup_{u\in\mathbb{C}^{d_1},v\in\mathbb{C}^{d_2}:\norm{u}=\norm{v}=1} u^\H Bv$. We use the notation  $B\succeq 0$ when $B$ is positive semi-definite. Define $B_j$ as its $j$th column and $B_{j\cdot}$ as its $j$th row. For a square matrix $B$, define $\Tr(B)$ as its trace and $\text{rank}(B)$ as its rank. For two matrices $A=(A_{jk})\in\mathc^{d_1\times d_2}$ and $B=(B_{jk})\in\mathc^{d_1\times d_2}$, define $\iprod{A}{B} = \Tr(A^\H B) =\sum_{j=1}^{d_1}\sum_{k=1}^{d_2} \overline{A_{jk}} B_{jk}$ as its Frobenius inner product.
 For two positive sequences $\{a_n\}$ and $\{b_n\}$, $a_n\lesssim b_n$ and $b_n\gtrsim a_n$ both mean $a_n\leq Cb_n$ for some constant $C>0$ independent of $n$. We also write $a_n=o(b_n)$ or $\frac{b_n}{a_n}\rightarrow\infty$ when $\limsup_n\frac{a_n}{b_n}=0$.  
We use $\indic{\cdot}$ as the indicator function. 
 

\section{A Deterministic Analysis Through Fixed Points}\label{sec:2}
In this section, we show the difference between the MLE and the BM factorization can be upper bounded by a quantity related to the MLE. Our analysis is deterministic, using a fact the estimators are fixed points.

\subsection{Introducing an Equivalent Representation of the BM Factorization and a Different Loss Function}
We first introduction an equivalent representation of the BM factorization. For any $m\in\mathbb{N}\setminus\{1\}$, define
\begin{align*}
\mathcal{V}_m=\{V=(V_1,\ldots, V_n)\in\mathc^{m\times n}: \norm{V_j}=1,\forall j\in[n]\}
\end{align*}
as a set containing all $m\times n$ complex matrices with unit norm columns. Note that for any $Z$ that is in the feasible set of (\ref{eqn:BM_m_def}), it can be represented as $Z = V^\H V$ for some $V\in\mathcal{V}_m$, and vice versa. Consequently, the BM factorization can be equivalently formulated as
\begin{align}\label{eqn:V_BM_m_def}
\hat V^{\BM,m} = \argmax_{V\in\mathcal{V}_m} \iprod{Y}{V^\H V},
\end{align}
with $\hat Z^{\BM,m} = (\hat V^{\BM,m})^\H \hat V^{\BM,m}$. Since MLE and SDP are special cases of the BM factorization, we have $\hat z^\MLE = (\hat V^{\BM,1})^\H$ and $\hat Z^\SDP = (\hat V^{\SDP})^\H \hat V^{\SDP}$, where we define $\hat V^{\SDP} = \hat V^{\BM,n}$. As we will show in Section \ref{sec:fixed_points}, $\hat V^{\BM,m}$ is a fixed point of certain mapping, a critical property on which our analysis is built.

While the difference between the BM factorization and the MLE can be captured by  the normalized squared Frobenius norm $n^{-2}\|\hat Z^{\BM,m} - \hat z^\MLE (\hat z^\MLE)^\H\|_{\rm F}^2$, we opt to quantify it through the deviation between $\hat V^{\BM,m}$ and $\hat z^\MLE$.
Consider the following loss function $\ell_m:\mathc^{m\times n}\times \mathc^n\rightarrow \mathr$ defined as
\begin{align}\label{eqn:lm_def}
\ell_m(V,z)  = \min_{a\in\mathc^m:\norm{a}=1} \frac{1}{n} \fnorm{V - a z^\H}^2,
\end{align}
for any $V\in \mathc^{m\times n}$ and any $z\in \mathc^n$.
Then the deviation can be measured by $\ell_m(\hat V^{\BM,m}, \hat z^\MLE)$. 
The advantage of studying $\ell_m(\hat V^{\BM,m}, \hat z^\MLE)$ instead of $n^{-2}\|\hat Z^{\BM,m} - \hat z^\MLE (\hat z^\MLE)^\H\|_{\rm F}^2$ is twofold. First,
the two quantities are closely related  through the following inequality (see Lemma \ref{lem:loss} for its proof):
\begin{align}\label{eqn:loss_connection}
\frac{1}{n^2}\fnorm{\hat Z^{\BM,m} - \hat z^\MLE (\hat z^\MLE)^\H}^2 \leq 2\ell_m(\hat V^{\BM,m},\hat z^\MLE).
\end{align}
Hence, in order to establish Theorem \ref{thm:intro2}, it is sufficient to upper bound $\ell_m(\hat V^{\BM,m},\hat z^\MLE)$. Second and more importantly,
in our analysis, we view $\hat V^{\BM,m}$ and $\hat z^\MLE$ as fixed points of certain mappings, a property that it is more natural to exploit with $\ell_m(\hat V^{\BM,m}, \hat z^\MLE)$. As a result, in the remaining part of the paper, we will focus on analyzing and upper bounding $\ell_m(\hat V^{\BM,m}, \hat z^\MLE)$.

\subsection{Introducing $F_1$ and $F_m$: $\hat z^\MLE$ and $\hat V^{\BM,m}$ Are Their Fixed Points}\label{sec:fixed_points}
Our analysis relies on the fact that the MLE and the BM factorization are both fixed points.
Define a function $F_1:\mathc_1^n\rightarrow\mathc_1^n$ such that for any $z\in\mathc_1^n$, the $j$th coordinate of $F_1(z)$ is
\begin{align*}
[F_1(z)]_j = \begin{cases}
\frac{\sum_{k\in[n] } Y_{jk}z_k}{|\sum_{k\in[n] } Y_{jk}z_k|},\text{ if }\sum_{k\in[n] } Y_{jk}z_k\neq 0,\\
z_j, \text{ o.w.,}
\end{cases}\quad \forall j\in[n].
\end{align*}
It can be written equivalently as
\begin{align}\label{eqn:F_1_def}
[F_1(z)]_j = \begin{cases} 
\frac{[Y z]_j}{|[Yz]_j|},\text{ if }[Yz]_j\neq 0,\\
z_j,\text{ o.w.,}
\end{cases}\quad \forall j\in[n].
\end{align}
Then the MLE is a fixed point of the mapping $F_1$, that is,
\begin{align*}
\hat z^\MLE = F_1\br{\hat z^\MLE}.
\end{align*}
To see this, recall from the definition in (\ref{eqn:MLE_def}) that \(\hat{z}^{\MLE}\) maximizes the objective \(\langle Y, z z^\H \rangle\) over all \(z \in \mathbb{C}_1^n\). Fix any index \(j \in [n]\). We can write the objective at $\hat z^\MLE$ as
\[
\langle Y, \hat{z}^{\MLE} (\hat{z}^{\MLE})^\H \rangle = 2 \re\Bigg( \overline{\hat{z}^{\MLE}_j} \sum_{k\in[n]} Y_{jk} \hat{z}^{\MLE}_k \Bigg) + \text{(terms independent of } \hat{z}^{\MLE}_j).
\]
Since \(\hat{z}^{\MLE}\) is the global maximizer, its \(j\)th coordinate must maximize the first term of the above expression over $\mathc_1$. This implies
\[
\hat{z}^{\MLE}_j = \argmax_{z_j \in \mathbb{C}_1} \re\Bigg( \overline{z_j} \sum_{k\in[n]} Y_{jk} \hat{z}^{\MLE}_k \Bigg) = \frac{\sum_{k\in[n]}  Y_{jk} \hat{z}^{\MLE}_k}{\left| \sum_{k\in[n]}  Y_{jk} \hat{z}^{\MLE}_k \right|} = [F_1(\hat{z}^{\MLE})]_j,
\]
as long as $\sum_{k\in[n]}  Y_{jk} \hat{z}^{\MLE}_k\neq 0$.
Therefore, every coordinate of \(\hat{z}^{\MLE}\) satisfies the fixed-point condition, so \(\hat{z}^{\MLE} = F_1(\hat{z}^{\MLE})\).

In addition, for any $m\in\mathbb{N}\setminus\{1\}$, define a function $F_m: \mathcal{V}_m\rightarrow \mathcal{V}_m$ such that for any $V\in \mathcal{V}_m$,  the $j$th column of $F_m(V)$ is
\begin{align*}
[F_m(V)]_j = \begin{cases}
\frac{\sum_{k\in[n] }\bar{Y}_{jk} V_k}{\norm{\sum_{k\in[n] } \bar{Y}_{jk} V_k}},\text{ if }\sum_{k\in[n] } \bar{Y}_{jk} V_k\neq 0,\\
V_j,\text{ o.w.,}
\end{cases}\quad \forall j\in[n].
\end{align*}
It can be written equivalently as
\begin{align*}
[F_m(V)]_j = \begin{cases}
\frac{[V Y^\H]_j}{\norm{[V Y^\H]_j}},\text{ if }[V Y^\H]_j\neq 0,\\
V_j,\text{ o.w.,}
\end{cases}\quad \forall j\in[n].
\end{align*}
From the definition in (\ref{eqn:V_BM_m_def}), $\hat V^{\BM,m}$ maximizes the objective $\iprod{Y}{V^\H V}$ over all $V\in\mathcal{V}_m$. Following an argument analogous to the one above for \(\hat{z}^{\MLE}\), it can be shown that $\hat V^{\BM,m}$ is fixed point of the mapping $F_m$, that is,
\begin{align*}
\hat V^{\BM,m} &= F_m\br{\hat V^{\BM,m}}.
\end{align*}


The fact that $\hat{z}^\MLE$ and $\hat{V}^{\BM,m}$ are fixed points of $F_1$ and $F_m$ opens a door for our analysis. In Section \ref{sec:contraction}, we will first establish a contraction-type result for the mappings. Consider any $z \in \mathc_1^n$ and $V \in \mathcal{V}_m$. The key is to understand how $\ell_m(F_m(V), F_1(z))$ depends on $\ell_m(V, z)$. We aim to establish the following contraction-type result:
\begin{align}\label{eqn:main_sec_1}
\ell_m(F_m(V), F_1(z)) \leq \text{(some factor smaller than 1)} \times \ell_m(V, z) + \text{ some additive error term}.
\end{align}
On a high level, it means the mappings jointly have a contraction but with some additive error term.  
If $V, z$ are fixed points such that $F_m(V) = V$ and $F_1(z) = z$, we have $\ell_m(F_m(V), F_1(z)) = \ell_m(V, z)$, and (\ref{eqn:main_sec_1}) consequently becomes 
\begin{align}\label{eqn:main_sec_2}
\ell_m(V, z) \leq \text{(some factor smaller than 1)} \times \ell_m(V, z) + \text{ some additive error term},
\end{align}
an inequality involving $\ell_m(V, z)$ on both sides. Then the term $\ell_m(V, z)$ on the right-hand side can be absorbed into the one on the left-hand side, leading to an upper bound for $\ell_m(V, z)$ with an explicit expression. 
Since $V^{\BM,m}$ and $z^\MLE$ are fixed points, the derived upper bound holds for $\ell_m(V^{\BM,m}, z^\MLE)$. In Section \ref{sec:implication}, we include the result for $\ell_m(V^{\BM,m}, z^\MLE)$ with discussions.

We would like to clarify that the use of \(\hat{z}^{\MLE}\) and \(\hat{V}^{\BM,m}\) as fixed points in this paper is purely analytical: we explicitly exploit their fixed-point properties to quantify the theoretical difference between them. One might be tempted to use the mappings \(F_1\) and \(F_m\) for computation—for example, by iteratively applying \(F_1\) starting from an initialization until convergence. This procedure is known as the generalized power method and has been studied in prior work \cite{boumal2016nonconvex, liu2017estimation, zhong2018near}. In particular, convergence of the method is established in the low-noise regime where \(\sigma \lesssim \sqrt{n/\log n}\) \cite{zhong2018near}.
However, due to the non-linear nature of the mappings \(F_1\) and \(F_m\), the behavior of these iterative algorithms in the high-noise regime remains less understood. In particular, convergence may not be guaranteed when \(\sigma\) is large, and the conditions required for initialization in such regimes are still unclear. Further discussion on computational aspects is provided in Section~\ref{sec:computation}.

\subsection{A Contraction-type Result for $\ell_m(F_m(V), F_1(z))$ and $\ell_m(V, z)$}\label{sec:contraction}

In this section, we aim to establish (\ref{eqn:main_sec_1}). To achieve this, we need to study the two mappings $F_1$ and $F_m$. Note that they can both be decomposed into two similar steps, as demonstrated below:
\begin{align*}
&z \; \xrightarrow{\hspace*{5cm}} \; Yz \;\; \xrightarrow{\hspace*{5cm}} F_1(z)\\
&\xdownarrow{0.6cm}\ell_m(V,z) \quad \quad \parbox[t]{0.1\textwidth}{
                    matrix\\
                    multiplication} \quad \quad \quad\quad\quad \xdownarrow{0.6cm} \ell_m(VY^\H,Yz) \quad\quad \parbox[t]{0.1\textwidth}{
                    entrywise\\
                    normalization}\quad  \quad\quad \xdownarrow{0.6cm}  \ell_m(F_m(V), F_1(z))\\
&V \xrightarrow{\hspace*{5cm}} VY^\H \xrightarrow{\hspace*{5cm}} F_m(V)
\end{align*}
In the first step, they involve a matrix multiplication with the data matrix $Y$ such that $z$ becomes $Yz$ and $V$ becomes $VY^\H$. In the second step, they perform an entrywise (or column-wise) normalization such that $Yz$ becomes $F_1(z)$ and $VY^\H$ becomes $F_m(V)$. Consequently, our analysis is decomposed into two parts.

For the first part (the matrix multiplication part), Lemma \ref{lem:lm_contraction} shows that $\ell_m(V Y^\H, Yz)$ can be upper bounded by $\ell_m(V, z)$ up to some factor, provided that $V$ and $z$ are close to the ground truth $z^*$. The closeness of $V$ to $z^*$ can be measured by $\ell_m(V, z^*)$. Regarding $z$, we define a loss function in an analogous way. Define a loss function $\ell_1: \mathc_1^n \times \mathc_1^n \rightarrow \mathr$ such that for any $z, z' \in \mathc^n$,
\begin{align}\label{eqn:ell_1_def}
\ell_1(z', z) = \min_{a \in \mathc_1} \frac{1}{n} \|z' - az\|^2 = 2 - n^{-1} |(z')^\H z|.
\end{align}
Then, the closeness of $z$ to $z^*$ can be measured by $\ell_1(z, z^*)$. Lemma \ref{lem:lm_contraction} is an extension of Lemma 12 of \cite{zhong2018near}, which proves the vector case that connects $\ell_1(Yz', Yz)$ with $\ell_1(z', z)$ for $z, z' \in \mathc_1^n$ that are close to $z^*$. We generalize it to the matrix case.
\begin{lemma}\label{lem:lm_contraction}
Suppose $m \in \mathbb{N} \setminus \{1\}$ and $\epsilon \in (0, 1/2)$. For any $z \in \mathc_1^n$ such that $\ell_1(z, z^*) \leq \epsilon^2$ and any $V \in \mathcal{V}_m$ such that $\ell_m(V, z^*) \leq \epsilon^2$, we have
\begin{align*}
\ell_m(V Y^\H, Yz) \leq n^2 \left(6\epsilon + \frac{\sigma \norm{W}}{n}\right)^2 \ell_m(V, z).
\end{align*}
\end{lemma}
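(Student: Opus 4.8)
The plan is to reduce the matrix statement to the vector statement of Lemma 12 of \cite{zhong2018near} by carefully choosing alignment phases and exploiting the column structure. First I would unpack the loss functions: since $\ell_m(V,z) = \min_{\|a\|=1} n^{-1}\fnorm{V - az^\H}^2$, the minimizing $a$ is (up to normalization) $a^* = Vz / \norm{Vz}$, and I can write $V = a^* z^\H + E$ where $E$ is the residual with $n^{-1}\fnorm{E}^2 = \ell_m(V,z)$. The key idea is then that $VY^\H = a^* z^\H Y^\H + EY^\H = a^*(Yz)^\H + EY^\H$, so the deviation of $VY^\H$ from the rank-one matrix $a^*(Yz)^\H$ is governed by $\fnorm{EY^\H} \le \fnorm{E}\opnorm{Y}$. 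However, $\opnorm{Y}$ is too crude here; the whole point of the lemma is that the contraction factor stays below $1$ in the relevant regime, so I need the finer bound from the vector case which replaces $\opnorm{Y}$ by the much smaller quantity $n(6\epsilon + \sigma\opnorm{W}/n)$ that holds on the subspace of vectors close to $z^*$.

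The heart of the matter is therefore to show that each column of $E$ lies (approximately) in the good subspace. Here is where I would invoke the hypotheses $\ell_1(z,z^*)\le\epsilon^2$ and $\ell_m(V,z^*)\le\epsilon^2$. Column-wise, $V_j = a^*\bar z_j + E_j$, and I would compare $V_j$ against the corresponding column of the rank-one ground-truth-aligned matrix. The vector-case Lemma 12 of \cite{zhong2018near} says: for $u, u'$ close to $z^*$ in $\ell_1$, $\ell_1(Yu', Yu) \le n^2(6\epsilon + \sigma\opnorm{W}/n)^2 \,\ell_1(u',u)$. I would like to apply this with $u = z$ and, for each column index, an appropriate unit vector built from $V_j$ — but the columns $V_j$ are $m$-dimensional, not living in $\mathc_1^n$. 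The cleaner route is to work directly with the matrix multiplication by $Y^\H$ acting on the rows: decompose $V = a^* z^\H + E$ as above, and separately decompose relative to $z^*$, i.e. use $\ell_m(V,z^*)\le\epsilon^2$ to write $V = b^*(z^*)^\H + E'$ with $n^{-1}\fnorm{E'}^2 \le \epsilon^2$. Then $VY^\H - a^*(Yz)^\H = EY^\H$, and I want $\fnorm{EY^\H}^2 \le n^2(6\epsilon+\sigma\opnorm{W}/n)^2 \fnorm{E}^2$. This will follow if every column of $E^\H$ (equivalently every row of $E$), when hit by $Y^\H$, is not expanded by more than the stated factor — but that is exactly a statement about $Y^\H$ acting on vectors, and $E$'s rows are approximately orthogonal to... hmm, this needs care.

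Let me restate the cleanest plan. I would: (1) write $V = a z^\H + E$ with $a = Vz/\norm{Vz}$, $\|a\|=1$, so $n^{-1}\fnorm{E}^2 = \ell_m(V,z)$ and moreover $\re(a^\H E z) \ge 0$ type orthogonality from optimality; (2) observe $\ell_m(VY^\H, Yz) \le n^{-1}\fnorm{VY^\H - a(Yz)^\H}^2 = n^{-1}\fnorm{EY^\H}^2$; (3) bound $\fnorm{EY^\H}^2 = \sum_{i=1}^m \norm{Y E_{i\cdot}^\H}^2$ where $E_{i\cdot}$ is the $i$th row of $E$, so it suffices to show $\norm{Y w}^2 \le n^2(6\epsilon + \sigma\opnorm{W}/n)^2\norm{w}^2$ for each $w = E_{i\cdot}^\H$; (4) decompose $w = \alpha_i z + \beta_i z^* + w_i^\perp$ into components along $z$, along $z^*$, and orthogonal to both, and use that $\|w\|^2 \le \epsilon^2 n$ forces... no — actually the point is different: $E$ being the residual after subtracting the $z$-component means every row of $E$ is orthogonal to $\bar z$, i.e. $Ez = 0$, so each $w = E_{i\cdot}^\H$ satisfies $\langle w, \bar z\rangle = 0$. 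Combined with $z$ being close to $z^*$, the vector $w$ is approximately orthogonal to $z^*$ as well, and on that near-orthogonal-complement the operator $Y = z^*(z^*)^\H + \sigma W$ has norm at most $\approx$ (correction from $z$ vs $z^*$ misalignment, which is $O(\epsilon n)$) $+\, \sigma\opnorm{W}$. This is precisely the content distilled in Lemma 12 of \cite{zhong2018near}, and I would either cite it verbatim at the vector level applied to each row of $E$, or reprove the one-line estimate $\opnorm{Y|_{\{\bar z\}^\perp}} \le 6\epsilon n + \sigma\opnorm{W}$ using $\|z - (\text{phase})z^*\|^2 \le 2\epsilon^2 n$. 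The main obstacle is step (4): making rigorous that orthogonality to $\bar z$ plus $\ell_1$-closeness of $z$ to $z^*$ yields the clean bound $6\epsilon n + \sigma\opnorm{W}$ on $\norm{Yw}/\norm{w}$, including tracking the constant $6$; this is the genuine analytic content and is where I expect to essentially replay (and carefully adapt the constants of) the proof of Lemma 12 in \cite{zhong2018near}, now uniformly over all $m$ rows of $E$.
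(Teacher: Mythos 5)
Your proposal hinges on the claim that if $a = Vz/\norm{Vz}$ is the unit-norm minimizer in the definition of $\ell_m(V,z)$ and $E = V - az^\H$, then $Ez = 0$. This is false. A direct computation gives $Ez = Vz - a\,(z^\H z) = Vz\bigl(1 - n/\norm{Vz}\bigr)$, and since $n\ell_m(V,z) = 2n - 2\norm{Vz}$ one finds $\norm{Ez} = n - \norm{Vz} = \tfrac{n}{2}\ell_m(V,z)$, which is nonzero whenever the loss is nonzero. The confusion is between two different decompositions: the unconstrained orthogonal projection onto the span of $z^\H$, which would use $a = Vz/n$ (not unit norm) and does give $Ez = 0$; versus the constrained unit-norm alignment, which defines $\ell_m$ but does not produce an orthogonal residual. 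You need the unit-norm constraint to write $\ell_m(VY^\H, Yz) \leq n^{-1}\fnorm{VY^\H - a(Yz)^\H}^2$, but with that constraint the key orthogonality you rely on in step (3)–(4) simply fails, so your bound $\fnorm{EY^\H}^2 \leq \sum_i \norm{YE_{i\cdot}^\H}^2$ with each row ``approximately orthogonal to $z^*$'' does not get off the ground.

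The paper resolves this by decomposing relative to $z^*$, not $z$: write $V = a(z^*)^\H + \sqrt{n}A$ with $Az^* = 0$ (true orthogonal projection, $a = Vz^*/n$ need not be unit norm), write $z = bz^* + \sqrt{n}\beta$ with $\beta^\H z^* = 0$, and take $d$ to be the unit-norm minimizer realizing $\ell_m(V,z)$. Then $n\ell_m(V,z) = n\norm{a - d\bar{b}}^2 + n\fnorm{A - d\beta^\H}^2$ by Pythagoras (the two pieces are $z^*$-aligned and $z^*$-orthogonal), and, crucially, since $(A - d\beta^\H)z^* = 0$, multiplying $V - dz^\H$ by $Y^\H = z^*(z^*)^\H + \sigma W^\H$ lets the rank-one part act only on the aligned piece $\br{a - d\bar{b}}(z^*)^\H$. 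The genuine analytic content is then the inequality $\norm{a - d\bar{b}} \leq 6\epsilon\fnorm{A - d\beta^\H}$, which encodes that when $V$ and $z$ are both $\epsilon$-close to $z^*$, the mismatch between their $z^*$-aligned components is dominated by the orthogonal residual — this is exactly the step you flag as the ``main obstacle'' and leave to ``replay Lemma 12 of \cite{zhong2018near}.'' So beyond the incorrect orthogonality claim, the heart of the proof is still missing: the factor of $6\epsilon$ comes out of a careful comparison of $\norm{a}$, $|b|$ and a second variational argument over candidate alignment vectors (introducing $d_0 = \tfrac{a}{\norm{a}}\tfrac{b}{|b|}$ and comparing to $d$), none of which is sketched in your proposal.
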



For the second part (the normalization part), we need to study how the normalization affects the loss function $\ell_m$ to connect $\ell_m(V Y^\H, Yz)$ with $\ell_m(F_m(V), F_1(z))$. However, this is not straightforward as the normalization operation is not continuous and, more importantly, does not necessarily have a contraction property. To see this, consider any $t > 0$ and any $x, y \in \mathc$ such that $|x| = |y| = t$. Then $\left| \frac{x}{|x|} - \frac{y}{|y|} \right| = t^{-1} |x - y|$. If $t > 1$, then $x$ and $y$ get closer after normalization. If $t < 1$ and is close to 0, the distance between $\frac{x}{|x|}$ and $\frac{y}{|y|}$ can be much larger than that between $x$ and $y$, though the distance is capped at most 2. The following Lemma \ref{lem:normalization} shows how normalization changes the distance between vectors. In the lemma, (\ref{eqn:lem_normalization_1}) is a simple case where two vectors $x, y$ are non-zero; (\ref{eqn:lem_normalization_2}) allows $x, y$ to be zero, which can be used to analyze the normalizations in $F_1$ and $F_m$.

\begin{lemma}\label{lem:normalization}
Suppose $m\in\mathbb{N}$. For any vectors $x,y\in\mathc^m\setminus\{0\}$ and for any $t>0$, we have
\begin{align}\label{eqn:lem_normalization_1}
\norm{\frac{x}{\norm{x}} - \frac{y}{\norm{y}}} \leq  \frac{2\norm{x-y}}{t} + 2\indic{\norm{y}<  t}.
\end{align}
For any vectors $x,y,u,v\in\mathc^m$ such that $\norm{u},\norm{v}\leq 1$ and for any $t>0$, we have
\begin{align}\label{eqn:lem_normalization_2}
\norm{\br{\frac{x}{\norm{x}}\indic{x\neq 0} + u\indic{x=0}} - \br{ \frac{y}{\norm{y}}\indic{y\neq 0} + v\indic{y=0}}  } \leq  \frac{2\norm{x-y}}{t} + 2\indic{\norm{y}<  t}.
\end{align}
\end{lemma}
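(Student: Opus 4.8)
\textbf{Proof proposal for Lemma \ref{lem:normalization}.}

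The plan is to prove \eqref{eqn:lem_normalization_1} first and then deduce \eqref{eqn:lem_normalization_2} by a short case analysis that reduces to \eqref{eqn:lem_normalization_1}. For \eqref{eqn:lem_normalization_1}, the natural split is according to whether $\norm{y} \geq t$ or $\norm{y} < t$. In the case $\norm{y} < t$, the indicator on the right equals $1$, contributing $2$, and since $\norm{x/\norm{x} - y/\norm{y}} \leq \norm{x/\norm{x}} + \norm{y/\norm{y}} = 2$ the inequality holds trivially (the $2\norm{x-y}/t$ term is a nonnegative bonus). The substantive case is $\norm{y} \geq t$. Here I would use the standard normalization estimate: write
\begin{align*}
\frac{x}{\norm{x}} - \frac{y}{\norm{y}} = \frac{x - y}{\norm{x}} + y\left(\frac{1}{\norm{x}} - \frac{1}{\norm{y}}\right),
\end{align*}
so that by the triangle inequality and $\big|\norm{x} - \norm{y}\big| \leq \norm{x-y}$,
\begin{align*}
\norm{\frac{x}{\norm{x}} - \frac{y}{\norm{y}}} \leq \frac{\norm{x-y}}{\norm{x}} + \frac{\big|\norm{y} - \norm{x}\big|}{\norm{x}} \leq \frac{2\norm{x-y}}{\norm{x}}.
\end{align*}
This is not yet what we want, since the denominator is $\norm{x}$, not $t$; if $\norm{x}$ is tiny the bound is weak. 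To fix this I would instead do the symmetric version, pulling out $1/\norm{y}$: $\frac{x}{\norm{x}} - \frac{y}{\norm{y}} = \frac{1}{\norm{y}}(x - y) + x\big(\frac{1}{\norm{x}} - \frac{1}{\norm{y}}\big)$, but the cross term again has $\norm{x}$ in a denominator. The clean route is to observe that the left side is bounded by $2$ anyway, so it suffices to produce the bound $2\norm{x-y}/\norm{x}$ and then argue: either $\norm{x} \geq t$, in which case $2\norm{x-y}/\norm{x} \leq 2\norm{x-y}/t$ and we are done; or $\norm{x} < t \leq \norm{y}$, in which case $\norm{x-y} \geq \norm{y} - \norm{x} \geq \norm{y} - t \geq 0$, hmm — better: $\norm{x - y} \geq \big|\norm{y} - \norm{x}\big| = \norm{y} - \norm{x} > \norm{y} - t \geq 0$, and more usefully $2\norm{x-y}/t \geq 2(\norm{y} - \norm{x})/t$; I would instead note $\norm{x-y} \geq \norm{y} - \norm{x} \geq t - \norm{x}$ is not obviously $\geq t/2$. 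So in this sub-case the honest move is: $2 \leq 2\norm{x-y}/t$ whenever $\norm{x-y} \geq t$, and if $\norm{x-y} < t$ while $\norm{x} < t \leq \norm{y}$ then $\norm{y} \leq \norm{x} + \norm{x-y} < 2t$, and one rewrites using $\frac{2\norm{x-y}}{\norm{x}}$ — still has $\norm{x}$.

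Given this friction, the approach I would actually commit to is: prove the universal bound $\norm{\frac{x}{\norm{x}} - \frac{y}{\norm{y}}} \leq \frac{2\norm{x-y}}{\max\{\norm{x},\norm{y}\}}$, then branch on $\max\{\norm{x},\norm{y}\}$ versus $t$. If $\max\{\norm{x},\norm{y}\} \geq t$ the bound $\frac{2\norm{x-y}}{t}$ follows immediately. If $\max\{\norm{x},\norm{y}\} < t$ then in particular $\norm{y} < t$, so the indicator term $2\indic{\norm{y} < t} = 2$ dominates the left side which is always at most $2$. This cleanly closes \eqref{eqn:lem_normalization_1}; the only real content is the elementary inequality $\norm{\frac{x}{\norm{x}} - \frac{y}{\norm{y}}} \leq \frac{2\norm{x-y}}{\max\{\norm{x},\norm{y}\}}$, which follows from the computation above applied with the larger of the two norms in the denominator (by symmetry assume $\norm{x} \geq \norm{y}$ and use the first displayed decomposition). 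For \eqref{eqn:lem_normalization_2}, I would enumerate the four cases for $(\indic{x = 0}, \indic{y = 0})$: when both are nonzero it is exactly \eqref{eqn:lem_normalization_1}; when both are zero the left side is $\norm{u - v} \leq \norm{u} + \norm{v} \leq 2 = 2\indic{\norm{y} < t}$ since $\norm{y} = 0 < t$; when $x = 0, y \neq 0$ the left side is $\norm{u - y/\norm{y}} \leq 2$ and we need $\norm{y} < t$ to invoke the indicator — if $\norm{y} \geq t$ instead, then $\norm{x - y} = \norm{y} \geq t$ so $2\norm{x-y}/t \geq 2$ covers it; and the case $x \neq 0, y = 0$ is symmetric, with $\norm{y} = 0 < t$ making the indicator term equal $2$ outright.

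The main obstacle, such as it is, is purely bookkeeping: getting the denominator to be $t$ rather than $\norm{x}$ or $\norm{y}$ forces the $\max$-in-denominator refinement of the elementary normalization inequality and a careful case split, and in \eqref{eqn:lem_normalization_2} one must be careful that the right-hand side indicator is keyed to $\norm{y}$ (not $\norm{x}$), so the asymmetric cases $x = 0 \neq y$ versus $x \neq 0 = y$ are handled by genuinely different arguments — the former by comparing $\norm{x-y} = \norm{y}$ against $t$, the latter trivially since $\norm{y} = 0$. No analytic difficulty arises; the lemma is a deterministic, elementary fact, and the proof is a few lines of triangle inequalities plus the case enumeration above.
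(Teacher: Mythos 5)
Your proof is correct, and the route to \eqref{eqn:lem_normalization_1} is genuinely different from the paper's. You first establish the universal bound $\norm{x/\norm{x} - y/\norm{y}} \leq 2\norm{x-y}/\max\{\norm{x},\norm{y}\}$ via the algebraic decomposition $\frac{x}{\norm{x}} - \frac{y}{\norm{y}} = \frac{x-y}{\norm{x}} + y\bigl(\frac{1}{\norm{x}} - \frac{1}{\norm{y}}\bigr)$ with the larger norm in the denominator, together with the reverse triangle inequality $\bigl|\norm{x}-\norm{y}\bigr|\leq\norm{x-y}$, and then close with a single dichotomy on $\max\{\norm{x},\norm{y}\}$ versus $t$. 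The paper instead works with the law of cosines, writing $\norm{x-y}^2 = \norm{x}^2 + \norm{y}^2 - 2\norm{x}\norm{y}\cos\theta$ and $\norm{x/\norm{x} - y/\norm{y}}^2 = 2 - 2\cos\theta$ (with $\theta$ the angle between $x$ and $y$), and then runs a four-way case split on the relative sizes of $\norm{x}$, $\norm{y}$, $t$ and the sign of $\cos\theta$; the mixed case $\norm{y}\geq t>\norm{x}$ with $\cos\theta\geq 0$ there requires a small two-variable optimization to obtain constant $\sqrt{2}$. The cosine approach produces sharper constants in two sub-cases ($1$ rather than $2$ when both norms are $\geq t$, and $\sqrt{2}$ in the mixed positive-cosine case), but all of these are absorbed into the stated constant $2$, so your shorter argument loses nothing for the purposes of the lemma. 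Your treatment of \eqref{eqn:lem_normalization_2}, enumerating the four cases of $(x=0,y=0)$ and, in the asymmetric case $x=0\neq y$, exploiting $\norm{x-y}=\norm{y}$ to compare against $t$, is essentially the same as the paper's.
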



Lemma \ref{lem:normalization} (more specifically, (\ref{eqn:lem_normalization_2})) can be applied to analyze the difference between each column-wise normalization of $VY^\H$ and each coordinate-wise normalization of $Yz$ for any threshold $t > 0$. To be more specific, for the $j$th normalization of $VY^\H$ and $Yz$, the application of (\ref{eqn:lem_normalization_2}) leads to two terms, corresponding to the two terms in the upper bound of (\ref{eqn:lem_normalization_2}): the first term is essentially about the distance between $[VY^\H]_j$ and $[Yz]_j$, and the second term is about $\indic{|[Yz]_j| < t}$. Aggregated over all $j \in [n]$, the first term can be related to $\ell_m(VY^\H, Yz)$, which can be further bounded by Lemma \ref{lem:lm_contraction}; the second term becomes $\sum_{j \in [n]} \indic{|[Yz]_j| < t}$.
This leads to the following theorem, which gives a connection between $\ell_m(F_m(V), F_1(z))$ and $\ell_m(V, z)$.

\begin{theorem}\label{thm:fixed_points}
Suppose $m\in\mathbb{N}\setminus\{1\}$ and  $\epsilon\in(0,1/2)$. For any $z\in\mathc_1^n$ such that $\ell_1(z,z^*)\leq \epsilon^2$ and any $V\in\mathcal{V}_m$ such that $\ell_m(V,z^*)\leq \epsilon^2$, we have
\begin{align}\label{eqn:main_sec_3}
\ell_m(F_m(V),F_1(z)) \leq  \frac{4n^2}{t^2}\br{6\epsilon + \frac{\sigma\norm{W}}{n}}^2 \ell_m(V,z) + \frac{4}{n}  \sum_{j\in[n]}\indic{ \abs{[Yz]_j} <t},\quad \forall t>0.
\end{align}
If $z,V$ are further assumed to satisfy $z=F_1(z)$ and $V=F_m(V)$, we have
\begin{align}\label{eqn:main_sec_4}
 \ell_m(V,z)\leq \frac{8}{n}  \sum_{j\in[n]}\indic{ \abs{[Yz]_j} <\delta n},\quad \forall \delta \geq 2\sqrt{2}\br{6\epsilon + \frac{\sigma\norm{W}}{n}}.
\end{align}
\end{theorem}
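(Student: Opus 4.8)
The plan is to establish (\ref{eqn:main_sec_3}) first and then obtain (\ref{eqn:main_sec_4}) from it by a one-line absorption argument. For (\ref{eqn:main_sec_3}), let $\hat a\in\mathc^m$ with $\norm{\hat a}=1$ be a minimizer of $\ell_m(VY^\H,Yz)$ (it exists, as we minimize a continuous function over the compact unit sphere), so that $n\,\ell_m(VY^\H,Yz)=\fnorm{VY^\H-\hat a(Yz)^\H}^2=\sum_{j\in[n]}\norm{[VY^\H]_j-\overline{[Yz]_j}\hat a}^2$. Because $\norm{\hat a}=1$, this same $\hat a$ is admissible in the minimization defining $\ell_m(F_m(V),F_1(z))$, hence $n\,\ell_m(F_m(V),F_1(z))\le\sum_{j\in[n]}\norm{[F_m(V)]_j-\overline{[F_1(z)]_j}\hat a}^2$, and it suffices to bound each column difference.

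The key point is that, column by column, $[F_m(V)]_j$ and $\overline{[F_1(z)]_j}\hat a$ are exactly the ``normalize-or-fall-back'' vectors that the second bound (\ref{eqn:lem_normalization_2}) of Lemma~\ref{lem:normalization} is designed for. Write $x_j=[VY^\H]_j$ and $y_j=\overline{[Yz]_j}\hat a$, the $j$th column of $\hat a(Yz)^\H$. The definition of $F_m$ gives $[F_m(V)]_j=\frac{x_j}{\norm{x_j}}\indic{x_j\ne 0}+V_j\indic{x_j=0}$ with $\norm{V_j}=1$; and since $\norm{\hat a}=1$ we have $\norm{y_j}=\abs{[Yz]_j}$ and $y_j=0\iff[Yz]_j=0$, so the definition of $F_1$ together with $z\in\mathc_1^n$ gives $\overline{[F_1(z)]_j}\hat a=\frac{y_j}{\norm{y_j}}\indic{y_j\ne 0}+(\overline{z_j}\hat a)\indic{y_j=0}$ with $\norm{\overline{z_j}\hat a}=1$. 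Applying (\ref{eqn:lem_normalization_2}) with an arbitrary threshold $t>0$ yields $\norm{[F_m(V)]_j-\overline{[F_1(z)]_j}\hat a}\le\frac{2}{t}\norm{x_j-y_j}+2\indic{\abs{[Yz]_j}<t}$. I would then square this \emph{after} splitting on the value of $\indic{\abs{[Yz]_j}<t}$: when $\abs{[Yz]_j}\ge t$ the indicator is zero, and when $\abs{[Yz]_j}<t$ the left-hand side is at most $2$ since both $[F_m(V)]_j$ and $\overline{[F_1(z)]_j}\hat a$ are unit vectors. This discards the cross term and yields $\norm{[F_m(V)]_j-\overline{[F_1(z)]_j}\hat a}^2\le\frac{4}{t^2}\norm{x_j-y_j}^2+4\indic{\abs{[Yz]_j}<t}$, which is precisely what produces the constant $4$ instead of $8$. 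Summing over $j\in[n]$, using $\sum_j\norm{x_j-y_j}^2=n\,\ell_m(VY^\H,Yz)$, dividing by $n$, and then invoking Lemma~\ref{lem:lm_contraction} (its hypotheses $\ell_1(z,z^*)\le\epsilon^2$ and $\ell_m(V,z^*)\le\epsilon^2$ being exactly the ones assumed here) to bound $\ell_m(VY^\H,Yz)$ by $n^2(6\epsilon+\sigma\norm{W}/n)^2\,\ell_m(V,z)$ gives (\ref{eqn:main_sec_3}).

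For (\ref{eqn:main_sec_4}), the fixed-point hypotheses $z=F_1(z)$ and $V=F_m(V)$ turn the left-hand side of (\ref{eqn:main_sec_3}) into $\ell_m(V,z)$ itself, so $\ell_m(V,z)\le\frac{4n^2}{t^2}(6\epsilon+\sigma\norm{W}/n)^2\,\ell_m(V,z)+\frac{4}{n}\sum_{j\in[n]}\indic{\abs{[Yz]_j}<t}$ for every $t>0$. Taking $t=\delta n$ with $\delta\ge2\sqrt2\,(6\epsilon+\sigma\norm{W}/n)$ makes the multiplicative coefficient $\frac{4n^2}{t^2}(6\epsilon+\sigma\norm{W}/n)^2$ at most $\tfrac12$; since $\ell_m(V,z)$ is finite (in fact at most $4$, as $\fnorm{V}^2=\norm{z}^2=n$), I move that term to the left-hand side and multiply by $2$ to reach $\ell_m(V,z)\le\frac{8}{n}\sum_{j\in[n]}\indic{\abs{[Yz]_j}<\delta n}$, which is (\ref{eqn:main_sec_4}).

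The two substantive ingredients --- Lemma~\ref{lem:lm_contraction} for the matrix-multiplication step and Lemma~\ref{lem:normalization} for the normalization step --- are already in hand, so what remains is essentially bookkeeping. The one place that needs care, and that I would flag as the main (if minor) obstacle, is the column-wise treatment of the degenerate cases: one must check that $[F_m(V)]_j$ and $\overline{[F_1(z)]_j}\hat a$ genuinely have the two-branch form required by (\ref{eqn:lem_normalization_2}) --- in particular that $y_j=0\iff[Yz]_j=0$, using $\hat a\ne0$, and that the fall-back vectors $V_j$ and $\overline{z_j}\hat a$ both have norm at most $1$ --- and then carry out the split into $\{\abs{[Yz]_j}\ge t\}$ and $\{\abs{[Yz]_j}<t\}$ cleanly so that the final constants are the claimed $4$ and $8$ rather than something larger.
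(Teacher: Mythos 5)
Your proposal is correct and follows essentially the same route as the paper's proof: invoke Lemma~\ref{lem:lm_contraction} for the matrix-multiplication step, apply (\ref{eqn:lem_normalization_2}) of Lemma~\ref{lem:normalization} column by column for the normalization step (noting, as the paper does via the identity $F_m(az^\H)=a(F_1(z))^\H$, that $\overline{[F_1(z)]_j}\hat a$ has the required two-branch form), and then absorb the contraction factor by choosing $t=\delta n$. The one point you flag — that the passage from the Lemma~\ref{lem:normalization} bound to $\frac{4}{t^2}\norm{x_j-y_j}^2+4\indic{\abs{[Yz]_j}<t}$ requires a case split on the indicator (using that both columns are unit vectors when the indicator is $1$) rather than a blind $(a+b)^2\le 2a^2+2b^2$ — is exactly what makes the constant $4$ rather than $8$, and your explicit treatment of it is actually cleaner than the paper's displayed chain.
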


%

In Theorem \ref{thm:fixed_points}, (\ref{eqn:main_sec_3}) holds for any threshold $t > 0$ and any $z \in \mathc_1^n$ and $V \in \mathcal{V}_m$ that are close to $z^*$. With a sufficiently large choice of $t$, the factor $\frac{4n^2}{t^2}\left(6\epsilon + \frac{\sigma \norm{W}}{n}\right)^2$ is smaller than 1, leading to the establishment of (\ref{eqn:main_sec_1}). Under this scenario, $F_m$ and $F_1$ jointly have a contraction-type property: after one iteration, $F_m(V)$ and $F_1(z)$ get closer, up to an additive error $\frac{4}{n} \sum_{j \in [n]} \indic{\abs{[Yz]_j} < t}$, compared to $V$ and $z$, with respect to the loss $\ell_m$.

In Theorem \ref{thm:fixed_points}, (\ref{eqn:main_sec_4}) is an immediate consequence of (\ref{eqn:main_sec_3}) if $z$ and $V$ are further assumed to be fixed points, following the argument as in (\ref{eqn:main_sec_2}). (\ref{eqn:main_sec_4}) shows that the distance between $V$, a fixed point of $F_m$, and $z$, a fixed point of $F_1$, provided that they are close to $z^*$, can be upper bounded by $\frac{8}{n} \sum_{j \in [n]} \indic{\abs{[Yz]_j} < \delta n}$, a property of $z$. This property essentially concerns the number of coordinates in $Yz$ whose absolute values are smaller than a certain threshold. If there is no such coordinate, i.e., $\sum_{j \in [n]} \indic{\abs{[Yz]_j} < \delta n} = 0$, then (\ref{eqn:main_sec_4}) leads to $\ell_m(V, z) = 0$.

\subsection{Implications on $ \ell_m(\hat V^{\BM,m},\hat z^\MLE)$}\label{sec:implication}

Since $\hat V^{\BM,m}$ and $\hat z^\MLE$ are fixed points of $F_m$ and $F_1$, respectively, a direct consequence of (\ref{eqn:main_sec_4}) in Theorem \ref{thm:fixed_points} is the following corollary  for $ \ell_m(\hat V^{\BM,m},\hat z^\MLE)$.

\begin{corollary}\label{cor:fixed_points_MLE}
Suppose $m\in\mathbb{N}\setminus\{1\}$ and $\epsilon\in(0,1/2)$. If $ \ell_1(\hat z^\MLE,z^*)\leq \epsilon^2$ and $\ell_m(\hat V^{\BM,m},z^*)\leq \epsilon^2$ are satisfied, we have
\begin{align*}
 \ell_m(\hat V^{\BM,m},\hat z^\MLE)\leq \frac{8}{n}  \sum_{j\in[n]}\indic{\abs{[Y\hat z^\MLE]_j}<\delta n},\quad \forall \delta \geq 2\sqrt{2}\br{6\epsilon + \frac{\sigma\norm{W}}{n}}.
\end{align*}
\end{corollary}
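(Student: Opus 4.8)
The proof of Corollary \ref{cor:fixed_points_MLE} is essentially an instantiation of Theorem \ref{thm:fixed_points} at the specific fixed points $\hat V^{\BM,m}$ and $\hat z^\MLE$. The plan is as follows. First, recall from Section \ref{sec:fixed_points} that $\hat z^\MLE = F_1(\hat z^\MLE)$ and $\hat V^{\BM,m} = F_m(\hat V^{\BM,m})$; these fixed-point identities were already verified there, so I would simply cite them. Thus $z = \hat z^\MLE$ and $V = \hat V^{\BM,m}$ meet the additional hypotheses $z = F_1(z)$ and $V = F_m(V)$ required for the second part of Theorem \ref{thm:fixed_points}.

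Next, I would check the remaining hypotheses of Theorem \ref{thm:fixed_points}: that $\ell_1(\hat z^\MLE, z^*) \leq \epsilon^2$ and $\ell_m(\hat V^{\BM,m}, z^*) \leq \epsilon^2$. But these are exactly the assumptions imposed in the statement of Corollary \ref{cor:fixed_points_MLE}, so there is nothing to prove here — they transfer verbatim. With $m \in \mathbb{N} \setminus \{1\}$ and $\epsilon \in (0, 1/2)$ also matching the hypotheses of the theorem, all conditions of Theorem \ref{thm:fixed_points} are in place.

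Finally, I would apply \eqref{eqn:main_sec_4} of Theorem \ref{thm:fixed_points} directly with this choice of $(V, z) = (\hat V^{\BM,m}, \hat z^\MLE)$. That inequality reads $\ell_m(V, z) \leq \frac{8}{n} \sum_{j \in [n]} \indic{\abs{[Yz]_j} < \delta n}$ for all $\delta \geq 2\sqrt{2}(6\epsilon + \sigma\norm{W}/n)$. Substituting $V = \hat V^{\BM,m}$ and $z = \hat z^\MLE$ yields precisely
\[
\ell_m(\hat V^{\BM,m}, \hat z^\MLE) \leq \frac{8}{n} \sum_{j \in [n]} \indic{\abs{[Y\hat z^\MLE]_j} < \delta n}, \quad \forall \delta \geq 2\sqrt{2}\br{6\epsilon + \frac{\sigma\norm{W}}{n}},
\]
which is the claimed bound. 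In short, the corollary is a one-line specialization and there is no genuine obstacle; the only "work" is recalling that the MLE and BM factorization are the relevant fixed points, which the excerpt has already established. The substantive difficulty has been pushed entirely into Theorem \ref{thm:fixed_points} (and, behind it, Lemmas \ref{lem:lm_contraction} and \ref{lem:normalization}) and into the subsequent probabilistic analysis of $\sum_{j \in [n]} \indic{\abs{[Y\hat z^\MLE]_j} < \delta n}$ in Section \ref{sec:3}.
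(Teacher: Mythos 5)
Your proof is correct and coincides with the paper's own reasoning: the paper presents Corollary \ref{cor:fixed_points_MLE} as a direct consequence of (\ref{eqn:main_sec_4}) in Theorem \ref{thm:fixed_points}, applied at $(V,z) = (\hat V^{\BM,m}, \hat z^\MLE)$, exactly as you do, with the fixed-point identities $\hat z^\MLE = F_1(\hat z^\MLE)$ and $\hat V^{\BM,m} = F_m(\hat V^{\BM,m})$ already established in Section \ref{sec:fixed_points}.
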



Corollary \ref{cor:fixed_points_MLE} reveals that the distance between $\hat{V}^{\BM,m}$ and $\hat{z}^\MLE$ can be upper bounded by a quantity of the MLE: $\frac{8}{n} \sum_{j \in [n]} \indic{\abs{[Y \hat{z}^\MLE]_j} < \delta n}$, which is essentially the proportion of coordinates in the vector $Y \hat{z}^\MLE$ whose absolute value is smaller than $\delta n$. Below are some remarks about this quantity and Corollary \ref{cor:fixed_points_MLE}.

\emph{1) Connection between the tightness of the BM factorization and the $\ell_\infty$ norm analysis.} It turns out an $\ell_\infty$ norm analysis for $W \hat z^\MLE$ is sufficient to show $ \ell_m(\hat V^{\BM,m},\hat z^\MLE)=0$, i.e., the tightness of the BM factorization. To see this, recall the decomposition of $Y$ in (\ref{eqn:phase_model_matrix}). Then for each $j\in[n]$,
\begin{align*}
[Y\hat z^\MLE]_j &= [(z^*(z^*)^\H + \sigma W) \hat z^\MLE]_j = z^*_j ((z^*)^\H \hat z^\MLE) + \sigma [W \hat z^\MLE]_j.
\end{align*}
Since $|(z^*)^\H \hat z^\MLE| = n(1-\ell_1(\hat z^\MLE, z^*)/2)\geq n(1-\epsilon^2/2)$ according to (\ref{eqn:ell_1_def}), we have
\begin{align}\label{eqn:new_1}
|[Y\hat z^\MLE]_j | \geq n(1-\epsilon^2/2) - \sigma |  [W \hat z^\MLE]_j| \geq n(1-\epsilon^2/2) - \sigma\norm{W \hat z^\MLE}_\infty.
\end{align}
Hence, if $\sigma\norm{W \hat z^\MLE}_\infty \leq n(1-\delta - \epsilon^2/2)$ holds, then $ \sum_{j\in[n]}\indic{\abs{[Y\hat z^\MLE]_j}<\delta n} =0$, and consequently $ \ell_m(\hat V^{\BM,m},\hat z^\MLE)=0$. As a result, it is sufficient to carry out  an $\ell_\infty$ norm analysis for the MLE to establish $ \ell_m(\hat V^{\BM,m},\hat z^\MLE)=0$, which is achievable when $\sigma$ is small enough, as shown in \cite{zhong2018near} for the SDP.

\emph{2) Corollary \ref{cor:fixed_points_MLE} is beyond the tightness of the BM factorization and consequently requires analysis beyond the existing $\ell_\infty$ norm framework.} Corollary \ref{cor:fixed_points_MLE} is not just about establishing $ \ell_m(\hat V^{\BM,m},\hat z^\MLE)=0$, the tightness of the BM factorization. In fact, it quantifies the deviation by $ \ell_m(\hat V^{\BM,m},\hat z^\MLE)$ and upper  bound it by $\frac{8}{n}\sum_{j\in[n]}\indic{\abs{[Y\hat z^\MLE]_j}<\delta n}$, the analysis of which   is actually beyond the $\ell_\infty$ norm of $W\hat z^\MLE $.  To see this, from (\ref{eqn:new_1}), we have that for each $j\in[n]$, 
\begin{align*}
\indic{\abs{[Y\hat z^\MLE]_j}<\delta n} &\leq \indic{n(1-\epsilon^2/2) -  \sigma |  [W \hat z^\MLE]_j| < \delta n}\\
& \leq \indic{\sigma |  [W \hat z^\MLE]_j| > n(1-\delta - \epsilon^2/2)}.
\end{align*}
Hence, the upper bound in Corollary \ref{cor:fixed_points_MLE} can be further upper bounded by
\begin{align}\label{eqn:main_sec_5}
\frac{8}{n}\sum_{j\in[n]}\indic{\abs{[Y\hat z^\MLE]_j}<\delta n}\leq \frac{8}{n}\sum_{j\in[n]} \indic{\sigma |  [W \hat z^\MLE]_j| > n(1-\delta - \epsilon^2/2)},
\end{align}
which is essentially about the proportion of coordinates in $W\hat z^\MLE$ that is larger than certain threshold in absolute value. To bound it, we need to study entrywise behavior of $W \hat z^\MLE$ instead of its $\ell_\infty$ norm.

\emph{3) Intuition on the exponential bound in Theorem \ref{thm:intro2} for $\frac{8}{n}\sum_{j\in[n]}\indic{\abs{[Y\hat z^\MLE]_j}<\delta n}$.} From Corollary \ref{cor:fixed_points_MLE}, it is clear that in order to establish our main result Theorem \ref{thm:intro2}, it is sufficient to analyze the MLE $\hat z^\MLE$ to provide an upper bound for the quantity $\frac{8}{n}\sum_{j\in[n]}\indic{\abs{[Y\hat z^\MLE]_j}<\delta n}$. However, directly establishing the exponential bound for it is not easy, due to the dependence between $\hat z^\MLE$ and $Y$ (through the noise matrix $W$). Nevertheless, here we assume they are independent from each other to provide some intuition why it has exponential upper bound. Recall (\ref{eqn:main_sec_5}) holds.
Then each coordinate of $W \hat z^\MLE$ follows a Gaussian distribution as $W$ is a Gaussian matrix. Though they are not completely independent from each other, we can show $\frac{1}{n}\sum_{j\in[n]} \indic{\sigma |  [W \hat z^\MLE]_j| > n(1-\delta - \epsilon^2/2)}$ concentrates on its expected values,  $\pbr{{\sigma |  [W \hat z^\MLE]_1| > n(1-\delta - \epsilon^2/2)}}$, which is a Gaussian  tail probability. With small $\delta,\epsilon$, it can be bounded explicitly by $\ebr{-\frac{cn}{\sigma^2}}$ for some constant $c>0$. This provides an intuition to explain why the bound in Theorem \ref{thm:intro2} takes this form. On the other hand, this is purely just an intuition as $\hat z^\MLE$ and $Y$ are actually highly dependent on each other. 


To conclude this section,  note that Corollary \ref{cor:fixed_points_MLE} requires $\hat z^\MLE$ and $\hat V^{\BM,m}$ to be close to the ground truth $z^*$. The following lemma shows that $\ell_m(\hat V^{\BM,m},z^*)$ and $\ell_1(\hat z^\MLE,z^*)$ are upper bounded by $\frac{8\sigma \norm{W}}{n}$. Hence, when $\sigma$ is small such that $\frac{8\sigma \norm{W}}{n}\leq \epsilon^2$, the assumptions needed in  Corollary \ref{cor:fixed_points_MLE} are satisfied, and then the conclusion established therein holds.
\begin{lemma}\label{lem:MLE_SDP_Loose_bound}
For any $m\in\mathbb{N}\setminus\{1\}$, we have
$\ell_m(\hat V^{\BM,m},z^*)\leq \frac{8\sigma \norm{W}}{n}.$
In addition, the same upper bound holds for $\ell_1(\hat z^\MLE,z^*)$.
\end{lemma}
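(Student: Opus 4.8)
The plan is to prove the bound $\ell_m(\hat V^{\BM,m}, z^*) \leq 8\sigma\norm{W}/n$ by a direct ``basic inequality'' argument, exploiting that $\hat V^{\BM,m}$ is the maximizer in (\ref{eqn:V_BM_m_def}). First I would observe that the ground truth $z^*$, viewed as a rank-one factor, is feasible: set $V^* = a^* (z^*)^\H \in \mathcal{V}_m$ where $a^* \in \mathc^m$ is any fixed unit vector (so each column $V^*_j = \bar{z^*_j} a^*$ has unit norm), and note $(V^*)^\H V^* = z^*(z^*)^\H$. Then optimality of $\hat V^{\BM,m}$ gives $\iprod{Y}{(\hat V^{\BM,m})^\H \hat V^{\BM,m}} \geq \iprod{Y}{z^*(z^*)^\H}$, i.e. $\iprod{Y}{\hat Z^{\BM,m} - z^*(z^*)^\H} \geq 0$. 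Substituting $Y = z^*(z^*)^\H + \sigma W$ and rearranging yields
\begin{align*}
\iprod{z^*(z^*)^\H}{z^*(z^*)^\H - \hat Z^{\BM,m}} \leq \sigma \iprod{W}{\hat Z^{\BM,m} - z^*(z^*)^\H}.
\end{align*}
The left-hand side is $\norm{z^*}^4 - |(z^*)^\H \hat Z^{\BM,m} z^*|$-type quantity; more precisely, using $\hat Z^{\BM,m}\succeq 0$ with unit diagonal and $\Tr(\hat Z^{\BM,m}) = \Tr(z^*(z^*)^\H) = n$, the left side equals $\frac12 \fnorm{\hat Z^{\BM,m} - z^*(z^*)^\H}^2$ up to the cross term — I would write $\fnorm{\hat Z^{\BM,m} - z^*(z^*)^\H}^2 = \Tr(\hat Z^{\BM,m})^2$-bookkeeping $= \fnorm{\hat Z^{\BM,m}}^2 + n^2 - 2\iprod{z^*(z^*)^\H}{\hat Z^{\BM,m}}$ and bound $\fnorm{\hat Z^{\BM,m}}^2 \leq \Tr(\hat Z^{\BM,m})^2 = n^2$ (using $\hat Z^{\BM,m}\succeq 0$), giving $\iprod{z^*(z^*)^\H}{z^*(z^*)^\H - \hat Z^{\BM,m}} \geq \frac12 \fnorm{\hat Z^{\BM,m} - z^*(z^*)^\H}^2$.

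Next I would bound the right-hand side: $\sigma\iprod{W}{\hat Z^{\BM,m} - z^*(z^*)^\H} \leq \sigma \norm{W} \cdot \norm{\hat Z^{\BM,m} - z^*(z^*)^\H}_* \leq \sigma\norm{W}\cdot\sqrt{2n}\,\fnorm{\hat Z^{\BM,m} - z^*(z^*)^\H}$, where the last step uses that $\hat Z^{\BM,m} - z^*(z^*)^\H$ has rank at most $m+1$ — actually better, since both matrices are PSD with trace $n$, the nuclear norm of the difference is at most $2n$, but to get a clean Frobenius comparison I would instead use the rank bound. Combining the two displays and dividing through gives $\fnorm{\hat Z^{\BM,m} - z^*(z^*)^\H} \lesssim \sigma\norm{W}\sqrt{n}$, hence $n^{-2}\fnorm{\hat Z^{\BM,m} - z^*(z^*)^\H}^2 \lesssim \sigma^2\norm{W}^2/n^3$, and then translating back to $\ell_m$ via an inequality analogous to (\ref{eqn:loss_connection}) but in the reverse direction. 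Alternatively, and more cleanly, I would work directly with $V$: let $\hat a = \argmin_{\norm{a}=1}\fnorm{\hat V^{\BM,m} - a(z^*)^\H}^2$ so that $\ell_m(\hat V^{\BM,m}, z^*) = n^{-1}\fnorm{\hat V^{\BM,m} - \hat a (z^*)^\H}^2 = n^{-1}(2n - 2\re(\hat a^\H \hat V^{\BM,m} z^*)) = 2 - 2n^{-1}\norm{\hat V^{\BM,m} z^*}$ (choosing $\hat a$ aligned with $\hat V^{\BM,m}z^*$), and relate $\norm{\hat V^{\BM,m} z^*}^2 = (z^*)^\H \hat Z^{\BM,m} z^* = \iprod{z^*(z^*)^\H}{\hat Z^{\BM,m}}$ to the objective gap above.

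The main obstacle I anticipate is the bookkeeping in the reverse direction: (\ref{eqn:loss_connection}) bounds $n^{-2}\fnorm{\cdot}^2$ by $\ell_m$, but here I need to bound $\ell_m$ by a Frobenius-type quantity, which requires care because $\ell_m$ involves the optimal alignment $\hat a$. The cleanest route is to avoid the detour through $\hat Z^{\BM,m}$ entirely and run the basic inequality at the level of $V$: with $\rho := n^{-1}\norm{\hat V^{\BM,m}z^*} \in [0,1]$, one has $\ell_m(\hat V^{\BM,m}, z^*) = 2(1-\rho) \leq 2(1-\rho^2) = 2(1 - n^{-2}(z^*)^\H\hat Z^{\BM,m}z^*)$, and the objective inequality $\iprod{Y}{\hat Z^{\BM,m}}\geq \iprod{Y}{z^*(z^*)^\H} = n^2 + \sigma\iprod{W}{z^*(z^*)^\H}$ combined with $\iprod{Y}{\hat Z^{\BM,m}} = (z^*)^\H\hat Z^{\BM,m}z^* + \sigma\iprod{W}{\hat Z^{\BM,m}}$ gives $n^2 - (z^*)^\H\hat Z^{\BM,m}z^* \leq \sigma\iprod{W}{\hat Z^{\BM,m} - z^*(z^*)^\H} \leq \sigma\norm{W}\cdot 2n$ (since the difference of two trace-$n$ PSD matrices has nuclear norm $\leq 2n$). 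Dividing by $n^2$ yields $\ell_m(\hat V^{\BM,m}, z^*) \leq 2 \cdot 2\sigma\norm{W}/n \cdot \frac{1}{?}$ — tracking constants carefully gives the claimed $8\sigma\norm{W}/n$. The identical argument with $m=1$ and $\hat V^{\BM,1} = (\hat z^\MLE)^\H$ handles the $\ell_1(\hat z^\MLE, z^*)$ statement verbatim, since the MLE is the maximizer over the rank-one feasible set which also contains $z^*(z^*)^\H$.
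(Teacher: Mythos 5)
Your final ``cleaner'' argument is correct and gives a valid proof; the core engine (the basic inequality $\iprod{Y}{\hat Z^{\BM,m}}\geq\iprod{Y}{z^*(z^*)^\H}$ plus the nuclear-norm bound $|\iprod{W}{B}|\leq\norm{W}\norm{B}_*$, with $\norm{\hat Z^{\BM,m}}_*+\norm{z^*(z^*)^\H}_*=2n$) is exactly what the paper uses. Where you genuinely diverge is in passing from the trace gap $n^2-(z^*)^\H\hat Z^{\BM,m}z^*$ to $\ell_m$. The paper relaxes $\min_{\norm{a}=1}$ to an unconstrained $\min_a$ at a cost of a factor $4$ (via Lemma \ref{lem:normalization} with $t=1$), computes the minimizer as the column centroid $b=n^{-1}\hat V^{\BM,m}z^*$, and then re-expresses the variance as $\frac{4}{n^2}\Tr\bigl(z^*(z^*)^\H(z^*(z^*)^\H-\hat Z^{\BM,m})\bigr)$. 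You instead observe directly that $\ell_m(\hat V^{\BM,m},z^*)=2(1-\rho)$ with $\rho=n^{-1}\norm{\hat V^{\BM,m}z^*}\in[0,1]$, and then $1-\rho\leq(1-\rho)(1+\rho)=1-\rho^2=1-n^{-2}(z^*)^\H\hat Z^{\BM,m}z^*$. This is cleaner and sharper: your chain yields $\ell_m(\hat V^{\BM,m},z^*)\leq 4\sigma\norm{W}/n$ (not $8\sigma\norm{W}/n$ as your final sentence guesses; the ``?''-tracking actually resolves to $2\cdot 2\sigma\norm{W}/n$), which of course implies the stated bound. Two small remarks. First, the opening Frobenius-norm detour in your proposal does not quite work as written --- the rank bound $\norm{\hat Z^{\BM,m}-z^*(z^*)^\H}_*\leq\sqrt{m+1}\,\fnorm{\hat Z^{\BM,m}-z^*(z^*)^\H}$ introduces a dimension-dependent constant (bad when $m=n$), and there is no clean reverse of (\ref{eqn:loss_connection}); you correctly abandon this route. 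Second, your reduction for $\ell_1(\hat z^\MLE,z^*)$ works verbatim by taking $m=1$, $\hat V^{\BM,1}=(\hat z^\MLE)^\H$, and noting $\rho=n^{-1}|(\hat z^\MLE)^\H z^*|$.
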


\section{Analysis on MLE}\label{sec:3}


From Corollary \ref{cor:fixed_points_MLE}, it is evident that to establish our main result, Theorem \ref{thm:intro2}, it suffices to analyze the MLE $\hat{z}^\MLE$ and provide an upper bound for the quantity $\frac{1}{n}\sum_{j \in [n]}\indic{\abs{[Y \hat{z}^\MLE]_j} < \delta n}$. As demonstrated in Section \ref{sec:implication}, if $\hat{z}^\MLE$ is independent of $Y$, an exponential upper bound can be established immediately. However, the dependence between them complicates the problem, requiring a delicate analysis. In Section \ref{sec:high}, we provide an overview of our analysis, which can be decomposed into two steps detailed in Sections \ref{sec:step1} and \ref{sec:step2}. The main results are given in Section \ref{sec:exponential}.

\subsection{High-level Idea of Our Analysis}\label{sec:high}

In this section, we present the high-level idea of our analysis, which is quite technical and involved. It consists of the following two steps.

~\\ 
 \emph{Step 1: Approximate $\hat{z}^\MLE$ by fixed points of Lipschitz mappings.} Recall that $\hat{z}^\MLE$ is a fixed point of $F_1$. As discussed in Section \ref{sec:2}, the normalization in $F_1$ complicates the analysis. To address this, note that for any $x \in \mathc$, the normalization operation $x \rightarrow x/|x|$ can be approximated by a function $x \rightarrow x/\max\{|x|, t\}$ for some tuning parameter $t > 0$ (denoted as $g_t(\cdot)$ in (\ref{eqn:gt_def})).  

 The advantage of using this approximated mapping is two-fold. First, the approximation error can be controlled, as $|x/|x| - x/\max\{|x|, t\}| \leq \indic{|x| < t}$. Second and more importantly, the mapping is Lipschitz (see Lemma \ref{lem:gt}). With the help of this mapping, we define another Lipschitz mapping $G(\cdot,\cdot,\cdot)$ (see (\ref{eqn:G_def}) for its definition) whose fixed points are used to approximate $\hat z^\MLE$.
 Specifically, in Lemma \ref{lem:MLE_extreme_points_upper_bound_2}, we show that for a suitable $\delta$, we have
\begin{align*}
\frac{1}{n}\sum_{j \in [n]}\indic{\abs{[Y \hat{z}^\MLE]_j} < \delta n} \leq \frac{9}{n}\sum_{j \in [n]} \indic{\abs{z_j^* |(z^*)^\H \hat{z}^\MLE| + \sigma [W z]_j} < 2 \delta n},
\end{align*}
where $z$ is any fixed point of $G(\cdot, |(z^*)^\H \hat{z}^\MLE|, 2\delta n)$. In this way, we reduce the problem to a fixed point analysis for $G(\cdot, (z^*)^\H \hat{z}^\MLE, 2\delta n)$.

However, note that in $G(\cdot, |(z^*)^\H \hat{z}^\MLE|, 2\delta n)$, the quantity $|(z^*)^\H \hat{z}^\MLE|$ is still random as it involves $\hat{z}^\MLE$. To completely remove $\hat{z}^\MLE$ from the above expression, we approximate $|(z^*)^\H \hat{z}^\MLE|$ by a grid of scalars. In this way, the problem becomes: given some $s,t$, how to upper bound $\frac{1}{n}\sum_{j \in [n]} \indic{\abs{z_j^* s + \sigma \left[ W z \right]_j} < \text{some threshold}}$, or more conveniently
\begin{align}\label{eqn:main_sec_6}
\frac{1}{n}\sum_{j \in [n]} \indic{\sigma\abs{  \left[ W z \right]_j} > \text{some threshold}},
\end{align}
where $z$ is a fixed point of $G(\cdot, s, t)$.

~\\
 \emph{Step 2: Leave-one-out analysis for fixed points of $G(\cdot, s, t)$.} Upper bounding (\ref{eqn:main_sec_6}) is still challenging because the mapping $G(\cdot, s, t)$ involves the noise matrix $W$ for any given $s, t$. Consequently, $z$, a fixed point of $G(\cdot, s, t)$, also depends on $W$. Note that for each $j \in [n]$, $[Wz]_j = W_{j\cdot} z$. The key to decoupling this dependence is to approximate $z$ by some quantity $z^{(-j)}$ that is close but independent of $W_{j\cdot}$, such that $W_{j\cdot} z \approx W_{j\cdot} z^{(-j)}$, which follows a Gaussian distribution. As a result, (\ref{eqn:main_sec_6}) becomes
\begin{align}\label{eqn:main_sec_10}
\frac{1}{n}\sum_{j \in [n]} \indic{\sigma \abs{W_{j\cdot} z^{(-j)}} \geq \text{some threshold}},
\end{align}
which can be analyzed and leads to the desired exponential bound.

Now, the problem becomes finding the desired $z^{(-j)}$ for $z$. To achieve this, we use the idea of leave-one-out. Let $W^{(-j)}$ be a matrix equal to $W$ but with its $j$th row and column zeroed out. Let $G^{(-j)}$ be a function equal to $G$ but using $W^{(-j)}$ instead of $W$, and let $z^{(-j)}$ be its fixed point. By definition, $z^{(-j)}$ is independent of $W_{j\cdot}$. On the other hand, since $W$ and $W^{(-j)}$ only differ by one column and one row, intuitively, the two functions $G$ and $G^{(-j)}$ do not differ much, and their fixed points are consequently close.

To establish the closeness of $z$ and $z^{(-j)}$ rigorously, we use the following method. Let $z^{(0)} = z^*$. Starting from it, we apply $G(\cdot, s, t)$ iteratively to obtain a sequence of vectors $z^{(0)}, z^{(1)}, z^{(2)}, \ldots$. We can show that this sequence converges to a fixed point. So we let $z = z^{(\infty)}$. Instead of $G(\cdot, s, t)$, we can apply $G^{(-j)}(\cdot, s, t)$ iteratively, which leads to another sequence $z^{(0, -j)} = z^*, z^{(1, -j)}, z^{(2, -j)}, \ldots, z^{(-j)} = z^{(\infty, -j)}$. It is evident that $\|z^{(0, -j)} - z^{(0)}\| = 0$. Using mathematical induction, we can show (see Lemma \ref{lem:leave_one_out_close}) that
\begin{align}\label{eqn:main_sec_7}
\norm{z^{(T)} - z^{(T, -j)}} \leq 3, \forall T \in \mathbb{N}.
\end{align}
Hence, the same result holds for the limit, which is $\norm{z - z^{(-j)}}$.

%

~\\
\indent The details of these two steps are included in Sections \ref{sec:step1} and \ref{sec:step2}. Together, they lead to the establishment of exponential bounds in Section \ref{sec:exponential}, which is the main result of this paper.

We conclude this section by discussing techniques used in \cite{zhong2018near} for the small $\sigma$ regime, explaining why they fail in the high $\sigma$ regime, and how our approach connects with and diverges from theirs. In \cite{zhong2018near}, the main technical difficulty lies in controlling $\norm{W \hat{z}^\MLE}_\infty$, which is challenging due to the dependence between $W$ and $\hat{z}^\MLE$. To decouple this dependence, they construct a sequence of vectors $x^{(0)}, x^{(1)}, x^{(2)}, \ldots$, where $x^{(0)}$ is the leading eigenvector of the data matrix and the subsequent vectors are obtained by iteratively applying $F_1$. Additionally, they construct a leave-one-out counterpart that is independent of $W_{j\cdot}$: $x^{(0,-j)}, x^{(1,-j)}, x^{(2,-j)}, \ldots$. When $\sigma$ is small, they show the sequences satisfy:
\begin{align}
\ell_1(x^{(T)}, x^{(T,-j)}) &\lesssim \frac{1}{n}, \quad \norm{Wx^{(T)}}_\infty \lesssim \sqrt{n \log n}, \quad \ell_1(x^{(T)}, z^*) \lesssim 1,\label{eqn:main_sec_8}
\end{align}
for all $T \geq 0$, and the limit of $x^{(T)}$ is the MLE $\hat{z}^\MLE$. As a result, $\norm{W \hat{z}^\MLE}_\infty \lesssim \sqrt{n \log n}$ holds, which leads to the tightness of the SDP in the small $\sigma$ regime.

The key in \cite{zhong2018near}'s analysis is the $\ell_\infty$ norm result (\ref{eqn:main_sec_8}). Note that $x^{(T+1)} = F_1(x^{(T)})$ involves a column-wise normalization of $Y x^{(T)}$. As discussed in Section \ref{sec:2}, this normalization is difficult to analyze. However, once (\ref{eqn:main_sec_8}) holds, under the assumption that $\sigma$ is small, one can show the norm of each column $[Y x^{(T)}]_j$ is of the order $n$. Hence, the normalization $[Y x^{(T)}]_j / |[Y x^{(T)}]_j|$ is $[Y x^{(T)}]_j$ multiplied by a factor close to 1. This makes it easy to connect the error of $x^{(T+1)}$ with $x^{(T)}$. However, this argument no longer works when $\sigma$ is large, as the norms of columns $Y x^{(T)}$ are no longer guaranteed to be of the order $n$. For a column $[Y x^{(T)}]_j$ with a norm close to 0, its normalization $[Y x^{(T)}]_j / |[Y x^{(T)}]_j|$ differs dramatically from itself. This makes connecting the error of $x^{(T+1)}$ with that of $x^{(T)}$ difficult, leading to the failure of their analysis.

Compared to \cite{zhong2018near}'s analysis, our key novelty is in Step 1, where we bypass the normalization step of $F_1$ by replacing $F_1$ with a smoother mapping $G$. In Step 2, we follow the same idea of constructing leave-one-out sequences as in \cite{zhong2018near}. However, due to the use of $G$ instead of $F_1$, our analysis is much simpler, as we only need to establish (\ref{eqn:main_sec_7}) instead of the $\ell_\infty$ bound (\ref{eqn:main_sec_8}). Additionally, our sequences start from the ground truth $z^*$, instead of the eigenvector of the data matrix as in (\ref{eqn:main_sec_8}). This allows us to avoid the analysis of the eigenvector needed in \cite{zhong2018near}. It is also worth mentioning that \cite{zhong2018near} relates the SDP with the MLE through a dual certificate. We avoid this by simply using the fact that they are fixed points, which enables us to study not only the SDP but also the BM factorization more generally.

\subsection{Step 1: Approximate $\hat{z}^\MLE$ by Fixed Points of Lipschitz Mappings}\label{sec:step1}
As outlined in Section \ref{sec:high}, in this step, we are going to approximate  $\hat{z}^\MLE$ by fixed points of a Lipschitz mapping $G$ in order to upper bound $\frac{1}{n}\sum_{j \in [n]}\indic{\abs{[Y \hat{z}^\MLE]_j} < \delta n}$ by (\ref{eqn:main_sec_6}). Before introducing $g_t$ and $G$, we first introduce an auxiliary mapping $F_1'$ that is closely related to $F_1$ such that $\hat z^\MLE$ is its fixed point.

Recall the definition of the function $F_1$ in (\ref{eqn:F_1_def}). Note that for any $z\in\mathc_1^n$, $Yz = (z^*(z^*)^\H +\sigma W)z$ and consequently $[Yz]_j = z^*_j (z^*)^\H z + \sigma[Wz]_j$ for any $j\in[n]$. Then  (\ref{eqn:F_1_def}) can be written equivalently as
\begin{align*}
[F_1(z)]_j = \begin{cases}
\frac{ z^*_j (z^*)^\H z + \sigma [Wz]_j}{| z^*_j (z^*)^\H  z + \sigma [Wz]_j|},\text{ if } z^*_j (z^*)^\H  z + \sigma [Wz]_j\neq 0,\\
z_j,\text{ o.w.,}
\end{cases}\quad \forall j\in[n].
\end{align*}
Define a function $F'_1: \mathc^n\times \mathc\rightarrow\mathc^n$ such that for any $z\in\mathc_1^n$ and $s\in\mathc$, the $j$th coordinate of $F'_1(z)$ is
\begin{align}\label{eqn:F_1_prime_def}
[F'_1(z,s)]_j = \begin{cases}
\frac{ z^*_j s + \sigma [Wz]_j}{| z^*_j s + \sigma [Wz]_j|},\text{ if } z^*_j s + \sigma [Wz]_j\neq 0,\\
z_j,\text{ o.w.,}
\end{cases}\quad \forall j\in[n].
\end{align}
Since $\hat z^\MLE = F_1(\hat z^\MLE)$, it is easy to verify that $\hat z^\MLE$ is a fixed point of $F'(\cdot, (z^*)^\H \hat z^\MLE)$, i.e., $\hat z^\MLE = F_1'(\hat z^\MLE,(z^*)^\H \hat z^\MLE)$.
For simplicity, denote
\begin{align}\label{eqn:hat_s_def}
\hat s = (z^*)^\H\hat z^\MLE.
\end{align}
Then $z^\MLE$ satisfies
\begin{align}\label{eqn:MLE_F_1_prime_fixed}
\hat z^\MLE = F_1'(\hat z^\MLE,\hat s).
\end{align}
The difference between these two mappings $F_1(\cdot)$ and $F_1(\cdot,\hat s)$ is that: in $F_1(z)$, $z$ appears twice in its numerator $z^*_j (z^*)^\H z + \sigma [Wz]_j$; on the contrary, in $F_1'(z,\hat s)$, $z$ only appears once in its numerator $z^*_j \hat s + \sigma [Wz]_j$, despite that $\hat s$ depends on $z$. Hence, the mapping $F_1'(\cdot,\hat s)$ is less complicated than $F_1(\cdot)$ and is relatively easier to analyze.


Now we are ready to introduce $g_t$, an approximation of the normalization mapping $x\rightarrow x/|x|$, as we outline 
in Section \ref{sec:high}. For any $t\in\mathr$ such that $t>0$, define a function $g_t:\mathc \rightarrow\mathc$ as $g_t(x) = \frac{x}{\max\{|x|,t\}}$ for any $x\in\mathc$. That is, for any $x\in\mathc$,
\begin{align}\label{eqn:gt_def}
g_t(x) = \begin{cases}
\frac{x}{|x|},\text{ if }|x|>t ,\\
\frac{x}{t},\text{ o.w..}
\end{cases}
\end{align}
The following lemma shows $g_t$ is Lipschitz.
\begin{lemma}\label{lem:gt}
For any $x,y\in\mathc$, we have
$\abs{g_t(x) - g_t(y)} \leq \frac{\abs{x-y}}{t}, \forall t>0.$
\end{lemma}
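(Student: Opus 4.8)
\textbf{Proof plan for Lemma \ref{lem:gt}.} The claim is the Lipschitz bound $\abs{g_t(x)-g_t(y)}\le \abs{x-y}/t$ for $g_t(x)=x/\max\{\abs{x},t\}$. The natural route is a case analysis on whether $\abs{x}$ and $\abs{y}$ exceed the threshold $t$, since $g_t$ is defined piecewise. By symmetry in $x,y$ there are three cases: (i) both $\abs{x}\le t$ and $\abs{y}\le t$; (ii) both $\abs{x}>t$ and $\abs{y}>t$; (iii) exactly one of them, say $\abs{y}\le t<\abs{x}$.

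In case (i), $g_t(x)-g_t(y)=(x-y)/t$, so equality holds and there is nothing to do. In case (ii), $g_t$ reduces to the genuine normalization $x\mapsto x/\abs{x}$, and the standard estimate for normalizing two nonzero complex numbers gives $\abs{x/\abs{x}-y/\abs{y}}\le \abs{x-y}/\max\{\abs{x},\abs{y}\}$, which is at most $\abs{x-y}/t$ because both moduli exceed $t$; this can be shown directly, e.g.\ by writing $x/\abs{x}-y/\abs{y} = (x-y)/\abs{x} + y(1/\abs{x}-1/\abs{y})$ and using $\bigl|\abs{x}-\abs{y}\bigr|\le\abs{x-y}$, or by squaring. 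The only case with any real content is (iii), where $g_t(x)=x/\abs{x}$ and $g_t(y)=y/t$. Here I would bound $\abs{x/\abs{x}-y/t}$ by introducing the intermediate point $y/\abs{x}$ (legitimate since $\abs{x}>t\ge\abs{y}$, so $\abs{x}$ is a valid normalizer for $y$ as well): the triangle inequality gives $\abs{x/\abs{x}-y/t}\le \abs{x/\abs{x}-y/\abs{x}} + \abs{y/\abs{x}-y/t} = \abs{x-y}/\abs{x} + \abs{y}\bigl(1/t - 1/\abs{x}\bigr)$, and then I would argue the right-hand side is at most $\abs{x-y}/t$, using $\abs{y}\le t<\abs{x}$ together with $\bigl|\abs{x}-\abs{y}\bigr|\le\abs{x-y}$.

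The mild obstacle is making case (iii) tight: the naive triangle-inequality split above does not immediately land below $\abs{x-y}/t$ without care, so I expect to either choose the intermediate point more cleverly or, more robustly, recall that $g_t$ is the nearest-point projection of $\mathc$ onto the closed disk $\{w:\abs{w}\le 1\}$ rescaled — indeed $g_t(x) = P_{\{\abs{w}\le t\}}(x)/t$ where $P_{\{\abs{w}\le t\}}$ is the Euclidean projection onto the disk of radius $t$. Since projection onto a convex set is $1$-Lipschitz, $\abs{P_{\{\abs{w}\le t\}}(x) - P_{\{\abs{w}\le t\}}(y)}\le\abs{x-y}$, and dividing by $t$ gives the claim in all cases at once. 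I would likely present the projection argument as the clean proof, falling back on the explicit three-case computation only if a self-contained elementary argument is preferred. Either way the lemma is short and the bound $\abs{x-y}/t$ is exactly the Lipschitz constant of a $1/t$-scaled $1$-Lipschitz map.
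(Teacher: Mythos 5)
Your projection argument is correct and is genuinely different from the paper's proof. The paper establishes the bound by a three-way case analysis (both moduli below $t$, both above $t$, one on each side of $t$) and, in the two nontrivial cases, uses the law of cosines to compare $|x-y|^2$ with $|g_t(x)-g_t(y)|^2$; the key inequality in the ``both above $t$'' case is $|x-y|^2 \ge 2|x||y|(1-\cos\theta) \ge t^2|g_t(x)-g_t(y)|^2$, and in the mixed case it computes the difference of squared distances explicitly and shows it factors into a nonnegative product. Your observation that $g_t(x) = t^{-1}P_{D_t}(x)$ where $P_{D_t}$ is Euclidean projection onto the disk of radius $t$ replaces all of this with a single appeal to the $1$-Lipschitzness of metric projection onto a closed convex set (viewing $\mathbb{C}$ as $\mathbb{R}^2$), handling all three cases uniformly. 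This is cleaner and shorter, and it also explains why the constant $1/t$ is sharp.

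One caveat on your fallback case analysis: the intermediate inequality you quote for case (ii), namely $\abs{x/\abs{x}-y/\abs{y}}\le \abs{x-y}/\max\{\abs{x},\abs{y}\}$, is false. For $x=1$ and $y=-\epsilon$ with small $\epsilon>0$, the left side is $2$ while the right side is $1+\epsilon$. The correct sharp bound from this circle of ideas is $\abs{x/\abs{x}-y/\abs{y}}\le \abs{x-y}/\sqrt{\abs{x}\abs{y}}$, with geometric mean rather than max in the denominator; this is exactly what the paper's cosine-formula computation yields, and it still gives $\le \abs{x-y}/t$ when both moduli are at least $t$. Your suggested decomposition $x/\abs{x}-y/\abs{y} = (x-y)/\abs{x} + y(1/\abs{x}-1/\abs{y})$ with $\bigl|\abs{x}-\abs{y}\bigr|\le\abs{x-y}$ only gives a factor of $2$, not $1$. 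None of this matters for your main proof, since the projection argument carries the whole lemma on its own, but if you ever write out the elementary version you would need the paper's law-of-cosines computation (or an equivalent) rather than the stated max bound.
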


With $g_t$, define $G:\mathcleq^n \times \mathc\times \mathr \rightarrow \mathcleq^n$ such that for any $z\in\mathcleq^n, s\in\mathc, t>0$, the $j$th coordinate of $G(z,s,t)$ is
\begin{align}\label{eqn:G_def}
[G(z,s,t)]_{j} = g_t(z^*_j s + \sigma [Wz]_j) = g_t([z^*s+\sigma Wz]_j), \forall j\in[n].
\end{align}
The following Lemma \ref{lem:G_propoerties} gives a list of properties $G$ has. First, since $g_t$ is Lipschitz, $G(\cdot,s,t)$ is also Lipschitz. With a suitable choice of $t$, it is a contraction mapping, and consequently has a unique fixed point which can be achieved by iteratively applying the function starting from $z^*$. The last property shows that the sensitivity of the fixed point with respect to $s$ is well-controlled.

\begin{lemma}\label{lem:G_propoerties}
The function $G(\cdot,\cdot,\cdot)$ has the following properties:
\begin{enumerate}
\item For any $x,y\in\mathc^n$ and for any $s\in\mathc,t>0$, we have
\begin{align*}
\norm{G(x,s,t) - G(y,s,t)} &\leq  t^{-1}\sigma \norm{W} \norm{x-y}.
\end{align*}
\item  For any $s \in\mathc,t\geq 2\sigma\norm{W}$, and for any $z^{(0)}\in\mathcleq^n$, define $z^{(T)} = G(z^{(T-1)},s,t)$ for all $T\in\mathbb{N}$. Then
\begin{align*}
\norm{z^{(T+1)} - z^{(T)}} &\leq \frac{1}{2}\norm{z^{(T)} - z^{(T-1)}}, \forall T\in\mathbb{N}.
\end{align*}
\item For any $s \in\mathc,t\geq 2\sigma\norm{W}$, $G(\cdot,s,t)$ has exactly one fixed point. That is, there exists one and only one $z\in\mathcleq^n$ such that $z= G(z,s,t)$. In addition, $z$ can be achieved by iteratively applying $G(\cdot,s,t)$ starting from $z^*$. That is, let $z^{(0)}=z^*$ and define $z^{(T)} = G(z^{(T-1)},s,t)$ for all $T\in\mathbb{N}$. We have $z=\lim_{T\rightarrow\infty}G(z^{(T)},s,t)$.
\item For any $s\in\mathc,s' \in\mathc,t\geq 2\sigma\norm{W}$, let $z$ be the fixed point of $G(\cdot,s,t)$ and let $z'$ be the fixed point of $G(\cdot,s',t)$. We have $\norm{z-z'}^2\leq 4nt^{-2}\abs{s-s'}^2$ and 
\begin{align*}
\norm{\br{z^* s + \sigma Wz} - \br{z^* s' + \sigma Wz'}}^2 \leq 4n\abs{s-s'}^2.
\end{align*}
\end{enumerate}
\end{lemma}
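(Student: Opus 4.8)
The plan is to prove the four claims in Lemma \ref{lem:G_propoerties} in order, since each builds on the previous.

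\textbf{Part 1 (Lipschitz in the first argument).} I would unpack the definition. By \eqref{eqn:G_def}, the $j$th coordinate of $G(x,s,t) - G(y,s,t)$ is $g_t([z^*s + \sigma Wx]_j) - g_t([z^*s + \sigma Wy]_j)$. Applying Lemma \ref{lem:gt} coordinatewise with this $t$, each such difference is bounded in modulus by $t^{-1}|[\sigma Wx]_j - [\sigma Wy]_j| = t^{-1}\sigma|[W(x-y)]_j|$. Squaring and summing over $j \in [n]$ gives $\norm{G(x,s,t) - G(y,s,t)}^2 \leq t^{-2}\sigma^2 \norm{W(x-y)}^2 \leq t^{-2}\sigma^2 \norm{W}^2 \norm{x-y}^2$, using the definition of the operator norm. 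Taking square roots finishes Part 1.

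\textbf{Parts 2 and 3 (geometric contraction and unique fixed point).} For Part 2, apply Part 1 with $x = z^{(T)}$, $y = z^{(T-1)}$: since $t \geq 2\sigma\norm{W}$, the Lipschitz constant $t^{-1}\sigma\norm{W} \leq 1/2$, so $\norm{z^{(T+1)} - z^{(T)}} \leq \frac{1}{2}\norm{z^{(T)} - z^{(T-1)}}$. For Part 3, I would check $G(\cdot,s,t)$ maps $\mathcleq^n$ into itself --- this holds because $g_t(x) = x/\max\{|x|,t\}$ always has modulus at most $1$. Then $G(\cdot,s,t)$ is a $\tfrac12$-contraction on the complete metric space $\mathcleq^n$ with the Euclidean metric, so the Banach fixed point theorem gives existence and uniqueness of the fixed point $z$, and the iterates $z^{(T)}$ starting from $z^{(0)} = z^* \in \mathcleq^n$ converge to $z$ (indeed $\norm{z^{(T+1)} - z^{(T)}} \leq 2^{-T}\norm{z^{(1)} - z^{(0)}}$, so $(z^{(T)})$ is Cauchy). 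Alternatively, if one wants to avoid citing Banach, the geometric decay of consecutive differences from Part 2 shows $(z^{(T)})$ is Cauchy hence convergent; its limit is a fixed point by continuity of $G(\cdot,s,t)$; and any two fixed points $z, \tilde z$ satisfy $\norm{z - \tilde z} \leq \frac12 \norm{z - \tilde z}$, forcing $z = \tilde z$.

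\textbf{Part 4 (sensitivity in $s$).} This is the step I expect to be slightly more delicate, though still short. Let $z, z'$ be the fixed points for parameters $s, s'$. Then $z = G(z,s,t)$ and $z' = G(z',s',t)$, so
\begin{align*}
\norm{z - z'} = \norm{G(z,s,t) - G(z',s',t)} \leq \norm{G(z,s,t) - G(z',s,t)} + \norm{G(z',s,t) - G(z',s',t)}.
\end{align*}
The first term is $\leq \frac12 \norm{z - z'}$ by Part 1. For the second term, the $j$th coordinate difference is $g_t([z^*s + \sigma Wz']_j) - g_t([z^*s' + \sigma Wz']_j)$, which by Lemma \ref{lem:gt} is bounded by $t^{-1}|z^*_j s - z^*_j s'| = t^{-1}|z^*_j|\,|s - s'| = t^{-1}|s-s'|$ since $|z^*_j| = 1$; summing over $j$ gives the bound $\sqrt{n}\, t^{-1} |s - s'|$. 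Combining, $\norm{z - z'} \leq \frac12\norm{z-z'} + \sqrt{n}\,t^{-1}|s-s'|$, hence $\norm{z - z'} \leq 2\sqrt{n}\,t^{-1}|s-s'|$, i.e. $\norm{z - z'}^2 \leq 4n t^{-2}|s-s'|^2$. For the final inequality, write $(z^*s + \sigma Wz) - (z^*s' + \sigma Wz') = z^*(s - s') + \sigma W(z - z')$; by the triangle inequality its norm is at most $\norm{z^*}|s-s'| + \sigma\norm{W}\norm{z-z'} \leq \sqrt{n}|s-s'| + \sigma\norm{W}\cdot 2\sqrt{n}t^{-1}|s-s'| \leq \sqrt{n}|s-s'| + \sqrt{n}|s-s'| = 2\sqrt{n}|s-s'|$, using $2\sigma\norm{W}/t \leq 1$ and $\norm{z^*} = \sqrt{n}$. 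Squaring gives $\norm{(z^*s + \sigma Wz) - (z^*s' + \sigma Wz')}^2 \leq 4n|s-s'|^2$, as claimed. The only thing to be careful about throughout is tracking that every use of Lemma \ref{lem:gt} requires the \emph{same} threshold $t$ on both arguments, which is the case here; no real obstacle arises.
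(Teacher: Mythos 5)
Your proof is correct and follows essentially the same strategy as the paper's: Parts 1--3 are identical (Lipschitz of $g_t$, then a Banach-type contraction argument), and Part 4 uses the same fixed-point-plus-Lipschitz idea with a "solve for $\norm{z-z'}$" step. The only cosmetic difference is in Part 4, where you split $G(z,s,t)-G(z',s',t)$ via the triangle inequality at the $G$ level and work with unsquared norms, whereas the paper bounds the coordinate differences of $z^*s+\sigma Wz$ directly and squares via $(a+b)^2\leq 2(a^2+b^2)$; both routes yield the same constants.
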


In addition, since $g_t$ approximates the normalization mapping $x\rightarrow x/|x|$, $G(\cdot,s,t)$ can be seen as a Lipschitz function that approximates $F_1'(\cdot, s)$. In fact, the approximation error can be bounded by the following lemma.
\begin{lemma}\label{lem:G_approximation_error}
For any $z\in\mathcleq^n$, $s\in\mathc$, and $t>0$, we have
\begin{align*}
\norm{F_1'(z, s) - G(z,s,t)}^2 \leq 4 \sum_{j\in[n]}\indic{|z^*_j s + \sigma [Wz]_j| < t}^2.
\end{align*}
\end{lemma}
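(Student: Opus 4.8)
The plan is to bound the squared norm coordinatewise: since $\norm{F_1'(z,s) - G(z,s,t)}^2 = \sum_{j\in[n]} \abs{[F_1'(z,s)]_j - [G(z,s,t)]_j}^2$, it suffices to show that for each $j\in[n]$,
\begin{align*}
\abs{[F_1'(z,s)]_j - [G(z,s,t)]_j}^2 \leq 4\,\indic{\abs{z^*_j s + \sigma[Wz]_j} < t}^2 = 4\,\indic{\abs{z^*_j s + \sigma[Wz]_j} < t}.
\end{align*}
Write $w_j := z^*_j s + \sigma [Wz]_j$ for brevity, so that $[G(z,s,t)]_j = g_t(w_j)$ and $[F_1'(z,s)]_j$ equals $w_j/\abs{w_j}$ when $w_j\neq 0$ and $z_j$ otherwise. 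First I would dispose of the case $\abs{w_j}\geq t$: here $g_t(w_j) = w_j/\abs{w_j}$, and in particular $w_j\neq 0$ (since $t>0$), so $[F_1'(z,s)]_j = w_j/\abs{w_j}$ as well; the two coordinates coincide and the difference is $0$, which is trivially at most the right-hand side.

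For the remaining case $\abs{w_j} < t$, I simply bound the difference by the sum of the moduli of the two coordinates. On one hand, $\abs{g_t(w_j)} = \abs{w_j}/t \le 1$ when $\abs{w_j}\le t$. On the other hand, $[F_1'(z,s)]_j$ is either $w_j/\abs{w_j}$, which has modulus $1$, or $z_j$, which has modulus at most $1$ since $z\in\mathcleq^n$. Hence by the triangle inequality
\begin{align*}
\abs{[F_1'(z,s)]_j - [G(z,s,t)]_j} \leq \abs{[F_1'(z,s)]_j} + \abs{[G(z,s,t)]_j} \leq 1 + 1 = 2,
\end{align*}
so the squared difference is at most $4$, which equals $4\,\indic{\abs{w_j}<t}$ in this case. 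Summing the per-coordinate bound over $j\in[n]$ gives the claim.

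This argument is essentially routine; there is no real obstacle. The only points requiring a moment's care are (i) checking that $\abs{w_j}\ge t>0$ forces $w_j\neq 0$, so that the ``o.w.'' branch of $F_1'$ is not triggered and the two maps genuinely agree on $\{\,\abs{w_j}\ge t\,\}$, and (ii) using the hypothesis $z\in\mathcleq^n$ to control $\abs{z_j}\le 1$ in the fallback branch of $F_1'$. The indicator is squared in the statement, but since it is $\{0,1\}$-valued this changes nothing.
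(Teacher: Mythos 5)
Your proof is correct and follows essentially the same two-case argument as the paper's: when $\abs{w_j}\ge t$ the two maps coincide, and when $\abs{w_j}<t$ both coordinates have modulus at most $1$ so the difference is at most $2$. If anything you are slightly more careful than the paper's write-up, which asserts $\abs{[F_1'(z,s)]_j}=1$ in the second case without noting that the fallback branch $[F_1'(z,s)]_j=z_j$ only guarantees modulus $\le 1$ under the hypothesis $z\in\mathcleq^n$.
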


With Lemmas \ref{lem:G_propoerties} and \ref{lem:G_approximation_error}, the following lemma shows that $\hat z^\MLE$ can be approximated by the fixed point of $G(\cdot,\hat s,t)$. 
\begin{lemma}\label{lem:MLE_G_fixed_points}
For any $t\geq 4\sigma \norm{W}$,  with $z\in\mathcleq^n$ being the fixed point of $G(\cdot,\hat s,t)$, we have
\begin{align*}
\norm{\hat z^\MLE - z}^2 &\leq 32\sum_{j\in[n]}  \indic{\abs{ z^*_j \hat s + \sigma [Wz]_j}<t}.
\end{align*}
\end{lemma}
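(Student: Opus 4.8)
The plan is to compare $\hat z^\MLE$ and $z$ entrywise, exploiting that $\hat z^\MLE$ is a fixed point of $F_1'(\cdot,\hat s)$ by~(\ref{eqn:MLE_F_1_prime_fixed}) and that $z=G(z,\hat s,t)$ (which exists and is unique by Lemma~\ref{lem:G_propoerties}, since $t\ge 4\sigma\norm{W}\ge 2\sigma\norm{W}$). Write $a_j:=z_j^*\hat s+\sigma[W\hat z^\MLE]_j$ and $b_j:=z_j^*\hat s+\sigma[Wz]_j$, so that $a_j-b_j=\sigma[W(\hat z^\MLE-z)]_j$, while $[\hat z^\MLE]_j=\tfrac{a_j}{|a_j|}\indic{a_j\neq 0}+[\hat z^\MLE]_j\indic{a_j=0}$ and $z_j=g_t(b_j)$. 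Set $B:=\cbr{j\in[n]:|b_j|<t}$, so that $|B|$ is exactly the quantity on the right-hand side of the claim, and I would split $[n]$ into $B$ and its complement and treat the two pieces differently.

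On $B$ the estimate is cheap: there $z_j=b_j/t$ has modulus $|b_j|/t<1$ and $[\hat z^\MLE]_j\in\mathc_1$, so $|[\hat z^\MLE]_j-z_j|\le 2$ and this piece contributes at most $4|B|$ to $\norm{\hat z^\MLE-z}^2$. Off $B$ we have $|b_j|\ge t>0$, hence $b_j\neq 0$ and $z_j=g_t(b_j)=b_j/|b_j|$ is a genuine normalization; applying Lemma~\ref{lem:normalization}, eq.~(\ref{eqn:lem_normalization_2}), with $m=1$, $x=a_j$, $y=b_j$, $u=[\hat z^\MLE]_j$ (note $|u|=1\le 1$), and threshold $t$ yields $|[\hat z^\MLE]_j-z_j|\le \tfrac{2|a_j-b_j|}{t}+2\indic{|b_j|<t}=\tfrac{2\sigma}{t}\,|[W(\hat z^\MLE-z)]_j|$, where the indicator term vanishes precisely because $j\notin B$. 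Summing over $j\notin B$ gives a contribution bounded by $\tfrac{4\sigma^2}{t^2}\norm{W(\hat z^\MLE-z)}^2\le \tfrac{4\sigma^2\norm{W}^2}{t^2}\norm{\hat z^\MLE-z}^2$.

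Combining, I get $\norm{\hat z^\MLE-z}^2\le 4|B|+\tfrac{4\sigma^2\norm{W}^2}{t^2}\norm{\hat z^\MLE-z}^2$; since $t\ge 4\sigma\norm{W}$ the coefficient $\tfrac{4\sigma^2\norm{W}^2}{t^2}$ is at most $\tfrac14$, so the last term is absorbed into the left-hand side, leaving $\norm{\hat z^\MLE-z}^2\le \tfrac{16}{3}|B|\le 32|B|$, which is the assertion.

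I do not expect a genuinely hard step here: the contraction factor $\tfrac14$ that makes the absorption work is handed over directly by the hypothesis $t\ge 4\sigma\norm{W}$. The only points that need care are bookkeeping ones — first, handling the degenerate case $a_j=0$ (i.e.\ $[Y\hat z^\MLE]_j=0$), which is exactly what the $+u\indic{x=0}$ form in~(\ref{eqn:lem_normalization_2}) is designed for; and second, taking the partition of $[n]$ at the same threshold $t$ appearing in $G$ and in the target bound, so that on $B$ both vectors lie in the unit disk (avoiding any normalization blow-up) while off $B$ the indicator error term of Lemma~\ref{lem:normalization} is identically zero.
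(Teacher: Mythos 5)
Your argument is correct, and the constant you actually obtain, $16/3$, is sharper than the paper's $32$ (you then deliberately weaken it to match the statement). The underlying ingredients are the same as the paper's — the fixed-point identities $\hat z^\MLE = F_1'(\hat z^\MLE,\hat s)$ and $z = G(z,\hat s,t)$, the normalization estimate of Lemma~\ref{lem:normalization}~(\ref{eqn:lem_normalization_2}), and absorption of the contraction term via $t\ge 4\sigma\norm{W}$ — but the organization differs in a way worth noting. The paper inserts the intermediate object $F_1'(z,\hat s)$ and writes $\norm{\hat z^\MLE - z}\le \norm{F_1'(\hat z^\MLE,\hat s)-F_1'(z,\hat s)} + \norm{F_1'(z,\hat s)-G(z,\hat s,t)}$, bounding the first leg coordinatewise by Lemma~\ref{lem:normalization} and the second by the separate approximation Lemma~\ref{lem:G_approximation_error}; squaring then incurs the usual factor-of-two loss and a double-counting of the indicator term, which is why it lands on $32$ after absorbing an $\frac{8\sigma^2\norm{W}^2}{t^2}\le\frac12$ factor. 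You instead compare $\hat z^\MLE_j$ and $z_j$ directly and partition the index set by $B=\{j:|b_j|<t\}$ — precisely the set on which $g_t$ fails to be a pure normalization and on which Lemma~\ref{lem:G_approximation_error} would have been invoked — giving a cheap $4|B|$ contribution there and a clean application of Lemma~\ref{lem:normalization} off $B$ with the indicator term vanishing identically. This bypasses Lemma~\ref{lem:G_approximation_error} entirely, avoids the triangle-then-square loss, and absorbs the smaller factor $\frac{4\sigma^2\norm{W}^2}{t^2}\le\frac14$. The paper's route has the merit of modularity (Lemma~\ref{lem:G_approximation_error} is reusable and conceptually isolates the $g_t$-versus-normalization error), while yours is more economical and slightly tighter; both are fully rigorous.
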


With Lemma \ref{lem:MLE_G_fixed_points}, the quantity $\frac{1}{n}\sum_{j\in[n]}\indic{\abs{[Y\hat z^\MLE]_j}<\delta n}$ can be upper bounded by a similar quantity associated with the fixed point of $G$.
\begin{lemma}\label{lem:MLE_extreme_points_upper_bound}
For any $\delta\geq \frac{2\sigma \norm{W}}{n}$, with  $z\in\mathcleq^n$ being the fixed point of $G(\cdot,\hat s,2\delta n)$, we have
\begin{align*}
\frac{1}{n}\sum_{j\in[n]}\indic{\abs{[Y\hat z^\MLE]_j}<\delta n} \leq   \frac{9}{n}\sum_{j\in[n]}  \indic{\abs{z_j^*\hat s + \sigma[ W z]_j} < 2 \delta n}.
\end{align*}
\end{lemma}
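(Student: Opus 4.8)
The plan is to compare $[Y\hat z^\MLE]_j$ with $z_j^*\hat s + \sigma[Wz]_j$ coordinate by coordinate: the only difference between the two is a term driven by $\hat z^\MLE - z$, and Lemma~\ref{lem:MLE_G_fixed_points} already controls $\norm{\hat z^\MLE-z}$. Using the decomposition $Y = z^*(z^*)^\H + \sigma W$ together with $\hat s = (z^*)^\H\hat z^\MLE$ from (\ref{eqn:hat_s_def}), I would first write, for each $j\in[n]$,
\[
[Y\hat z^\MLE]_j = z_j^*\hat s + \sigma[W\hat z^\MLE]_j = \br{z_j^*\hat s + \sigma[Wz]_j} + \sigma\sbr{W(\hat z^\MLE - z)}_j .
\]
By the triangle inequality, on the event $\{\abs{[Y\hat z^\MLE]_j} < \delta n\}$ one has $\abs{z_j^*\hat s + \sigma[Wz]_j} < \delta n + \sigma\abs{[W(\hat z^\MLE - z)]_j}$, so either $\sigma\abs{[W(\hat z^\MLE - z)]_j} \geq \delta n$ or $\abs{z_j^*\hat s + \sigma[Wz]_j} < 2\delta n$. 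This yields the pointwise bound
\[
\indic{\abs{[Y\hat z^\MLE]_j} < \delta n} \leq \indic{\abs{z_j^*\hat s + \sigma[Wz]_j} < 2\delta n} + \indic{\sigma\abs{[W(\hat z^\MLE - z)]_j} \geq \delta n}.
\]

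Next I would sum over $j$ and handle the second sum by a Markov-type step followed by the operator-norm inequality:
\[
\sum_{j\in[n]}\indic{\sigma\abs{[W(\hat z^\MLE - z)]_j} \geq \delta n} \leq \frac{\sigma^2}{\delta^2 n^2}\sum_{j\in[n]}\abs{[W(\hat z^\MLE - z)]_j}^2 = \frac{\sigma^2\norm{W(\hat z^\MLE - z)}^2}{\delta^2 n^2} \leq \frac{\sigma^2\norm{W}^2}{\delta^2 n^2}\,\norm{\hat z^\MLE - z}^2 .
\]
Since $\delta \geq 2\sigma\norm{W}/n$, the value $t = 2\delta n$ satisfies $t \geq 4\sigma\norm{W}$, so Lemma~\ref{lem:MLE_G_fixed_points} applies and gives $\norm{\hat z^\MLE - z}^2 \leq 32\sum_{j\in[n]}\indic{\abs{z_j^*\hat s + \sigma[Wz]_j} < 2\delta n}$; the same hypothesis also forces $\sigma^2\norm{W}^2/(\delta^2 n^2) \leq 1/4$. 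Hence the second sum is at most $8\sum_{j\in[n]}\indic{\abs{z_j^*\hat s + \sigma[Wz]_j} < 2\delta n}$, and adding the first sum produces the factor $9$; dividing by $n$ gives the claim.

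I do not expect a genuine obstacle in this lemma — its content is really carried by Lemma~\ref{lem:MLE_G_fixed_points}, and the rest is a short deduction. The only points to check are (i) that the single hypothesis $\delta \geq 2\sigma\norm{W}/n$ does double duty, simultaneously making $t = 2\delta n$ admissible in Lemma~\ref{lem:MLE_G_fixed_points} (and in part~3 of Lemma~\ref{lem:G_propoerties}, so that the fixed point $z$ of $G(\cdot,\hat s,2\delta n)$ is well defined) and collapsing the prefactor $32\,\sigma^2\norm{W}^2/(\delta^2 n^2)$ down to $8$; and (ii) the degenerate case $\delta n = 0$, i.e.\ $W = 0$, in which both sides vanish and the Markov step is vacuous, so one may assume $\delta n > 0$ throughout.
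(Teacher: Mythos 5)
Your argument is correct and follows essentially the same route as the paper: the same decomposition $[Y\hat z^\MLE]_j = (z_j^*\hat s + \sigma[Wz]_j) + \sigma[W(\hat z^\MLE - z)]_j$, the same indicator-splitting into $\indic{\abs{z_j^*\hat s + \sigma[Wz]_j} < 2\delta n}$ plus a remainder term, the same Markov-plus-operator-norm bound on that remainder, and the same application of Lemma~\ref{lem:MLE_G_fixed_points} with $t=2\delta n$ followed by the arithmetic $32\cdot\tfrac14 + 1 = 9$. The only cosmetic difference is that the paper isolates the indicator split as a standalone inequality (\ref{eqn:indicator_split}) for reuse, while you derive it inline.
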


Note that in Lemma \ref{lem:MLE_extreme_points_upper_bound}, the function $G(\cdot,\hat s,2dn)$ depends on $\hat s$ and consequently depend on $\hat z^\MLE$. We want to further decouple the dependence so that we only need to study $G(\cdot,s,t)$ for some fixed $s$. To achieve this, in Lemma \ref{lem:MLE_extreme_points_upper_bound_2}, we first show that  $G(\cdot,\hat s,2dn)$ can be replaced by $G(\cdot,|\hat s|,2dn)$. That is, the phase information in $G(\cdot,\hat s,2dn)$ is not important. What matters is its magnitude.

\begin{lemma}\label{lem:MLE_extreme_points_upper_bound_2}
For any $\delta\geq \frac{2\sigma \norm{W}}{n}$, with $z\in\mathcleq^n$ being the fixed point of $G(\cdot,|\hat s|,2\delta n)$, we have
\begin{align*}
\frac{1}{n}\sum_{j\in[n]}\indic{\abs{[Y\hat z^\MLE]_j}<\delta n} &\leq    \frac{9}{n}\sum_{j\in[n]}  \indic{\abs{z_j^*|\hat s| + \sigma \left[ W z \right]_j} < 2 \delta n}.
\end{align*}
\end{lemma}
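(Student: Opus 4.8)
\textbf{Proof proposal for Lemma \ref{lem:MLE_extreme_points_upper_bound_2}.}

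The plan is to derive this lemma from Lemma \ref{lem:MLE_extreme_points_upper_bound} by exploiting a phase-invariance symmetry of the mapping $G$. The starting observation is that $\hat s = (z^*)^\H \hat z^\MLE$ is a complex scalar, so we may write $\hat s = |\hat s| e^{i\theta}$ for some phase $\theta \in [0, 2\pi)$. Multiplying $z^*$ by the unit scalar $e^{i\theta}$ and the data matrix $Y$ (equivalently $W$) by the same scalar leaves the phase synchronization structure intact, and I expect the fixed points of $G(\cdot, \hat s, 2\delta n)$ and $G(\cdot, |\hat s|, 2\delta n)$ to be related by exactly this rotation. Concretely, let $z$ be the fixed point of $G(\cdot, |\hat s|, 2\delta n)$ (guaranteed unique by part 3 of Lemma \ref{lem:G_propoerties}, since $2\delta n \geq 4\sigma\norm{W} \geq 2\sigma\norm{W}$), and set $\tilde z = e^{i\theta} z$. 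First I would verify that $\tilde z$ is the fixed point of $G(\cdot, \hat s, 2\delta n)$: since $g_t(e^{i\theta} x) = e^{i\theta} g_t(x)$ for any $x \in \mathc$ and $t > 0$ (because $|e^{i\theta}x| = |x|$), we get
\begin{align*}
[G(\tilde z, \hat s, 2\delta n)]_j = g_{2\delta n}(z_j^* \hat s + \sigma[W\tilde z]_j) = g_{2\delta n}\br{e^{i\theta}\br{z_j^*|\hat s| + \sigma[Wz]_j}} = e^{i\theta}[G(z,|\hat s|,2\delta n)]_j = e^{i\theta} z_j = \tilde z_j,
\end{align*}
using $z_j^*\hat s = z_j^* |\hat s| e^{i\theta}$ and $W\tilde z = e^{i\theta} Wz$. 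By uniqueness of the fixed point, $\tilde z$ is \emph{the} fixed point of $G(\cdot, \hat s, 2\delta n)$.

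Next I would apply Lemma \ref{lem:MLE_extreme_points_upper_bound} with its fixed point being $\tilde z = e^{i\theta} z$, which yields
\begin{align*}
\frac{1}{n}\sum_{j\in[n]}\indic{\abs{[Y\hat z^\MLE]_j}<\delta n} \leq \frac{9}{n}\sum_{j\in[n]} \indic{\abs{z_j^*\hat s + \sigma[W\tilde z]_j} < 2\delta n}.
\end{align*}
Finally, the right-hand side is unchanged under removing the common phase: $z_j^*\hat s + \sigma[W\tilde z]_j = e^{i\theta}\br{z_j^*|\hat s| + \sigma[Wz]_j}$, so $\abs{z_j^*\hat s + \sigma[W\tilde z]_j} = \abs{z_j^*|\hat s| + \sigma[Wz]_j}$, and each indicator coincides with $\indic{\abs{z_j^*|\hat s| + \sigma[Wz]_j} < 2\delta n}$. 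This gives exactly the claimed bound.

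The only genuinely delicate point is the bookkeeping around which object is the fixed point in each invocation — making sure that when I feed Lemma \ref{lem:MLE_extreme_points_upper_bound} the fixed point $\tilde z$ of $G(\cdot, \hat s, 2\delta n)$, I am entitled to substitute $\tilde z = e^{i\theta} z$ with $z$ the fixed point of $G(\cdot, |\hat s|, 2\delta n)$. This hinges on the uniqueness statement in part 3 of Lemma \ref{lem:G_propoerties}, which requires $2\delta n \geq 2\sigma\norm{W}$; this follows from the standing hypothesis $\delta \geq 2\sigma\norm{W}/n$, which in fact gives the stronger $2\delta n \geq 4\sigma\norm{W}$. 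Everything else — the equivariance $g_t(e^{i\theta}x) = e^{i\theta}g_t(x)$ and the phase cancellation inside the indicators — is a one-line computation, so I do not anticipate any real obstacle beyond this identification.
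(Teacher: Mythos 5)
Your proposal is correct and follows the same approach as the paper: both use the equivariance $g_t(ax) = a\,g_t(x)$ for $a\in\mathc_1$ to show that rotating the fixed point of $G(\cdot,|\hat s|,2\delta n)$ by $\hat s/|\hat s|$ yields the fixed point of $G(\cdot,\hat s,2\delta n)$, then invoke Lemma \ref{lem:MLE_extreme_points_upper_bound} and cancel the common phase inside the indicator. The only (trivial) wrinkle you leave implicit is the degenerate case $\hat s = 0$, where $e^{i\theta}$ is not canonically defined; the paper disposes of it by observing that then $G(\cdot,|\hat s|,\cdot) = G(\cdot,\hat s,\cdot)$ so the lemma reduces directly to Lemma \ref{lem:MLE_extreme_points_upper_bound}.
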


Note that $|\hat s|$ is a real number. Once we have Lemma \ref{lem:MLE_extreme_points_upper_bound_2}, we then approximate $|\hat s|$ by points on a grid $\{s_0,s_1,s_2\ldots\}\subset \mathbb{R}$. Consequently, the fixed point of $G(\cdot,|\hat s|,2dn)$ can be approximated by those of $G(\cdot,s_k,2dn)$ where $k=0,1,2,\ldots$, leading to the following Proposition \ref{prop:MLE_extreme_points_upper_bound_grid}.

\begin{proposition}\label{prop:MLE_extreme_points_upper_bound_grid}
Suppose $\epsilon\in(0,1/2)$, $h>0$, and $\delta\geq \frac{2\sigma \norm{W}}{n}$. Assume $\hat z^\MLE$ satisfies $\ell_1(\hat z^\MLE,z^*)\leq \epsilon^2$. For each $k=0,1,2,\ldots, \lceil n\epsilon/h\rceil$, define $s_k = n- k h\in\mathr$  and let  $z_{s_k}\in\mathcleq^n$ be the fixed point of $G(\cdot, s_k,2\delta n)$. Then
\begin{align}
&\frac{1}{n}\sum_{j\in[n]}\indic{\abs{[Y\hat z^\MLE]_j}<\delta n}\nonumber \\
& \leq   9 \sum_{0\leq k\leq \lceil n\epsilon/h\rceil} \br{ \frac{1}{n}\sum_{j\in[n]}  \indic{\abs{z_j^*s_k + \sigma \left[ W z_{s_k} \right]_j} < 4 \delta n} }  +  \frac{9h^2}{\delta^2 n^2}\indic{h>\delta\sqrt{n}}.\label{eqn:main_sec_9}
\end{align}
\end{proposition}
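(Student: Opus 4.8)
The plan is to plug Lemma~\ref{lem:MLE_extreme_points_upper_bound_2} into a one-dimensional discretization of the scalar $|\hat s|$: replace $|\hat s|$ by the nearest point $s_{k^*}$ of the grid $\{s_0,s_1,\ldots\}$, control the resulting perturbation of the fixed point of $G$ through the magnitude-stability estimate in Lemma~\ref{lem:G_propoerties}(4), and then sum over the whole grid so that the choice of $k^*$ — which depends on $\hat z^\MLE$ — disappears.

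\emph{Applying Lemma~\ref{lem:MLE_extreme_points_upper_bound_2} and locating $|\hat s|$.} Since $\delta\geq 2\sigma\norm{W}/n$, Lemma~\ref{lem:MLE_extreme_points_upper_bound_2} gives, with $z\in\mathcleq^n$ the fixed point of $G(\cdot,|\hat s|,2\delta n)$,
\[
\frac{1}{n}\sum_{j\in[n]}\indic{\abs{[Y\hat z^\MLE]_j}<\delta n}\ \leq\ \frac{9}{n}\sum_{j\in[n]}\indic{\abs{z_j^*|\hat s|+\sigma[Wz]_j}<2\delta n},
\]
so it remains to bound the right-hand side. Note $2\delta n\geq 4\sigma\norm{W}\geq 2\sigma\norm{W}$, so Lemma~\ref{lem:G_propoerties} is applicable with $t=2\delta n$. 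By (\ref{eqn:ell_1_def}) and the assumption $\ell_1(\hat z^\MLE,z^*)\leq\epsilon^2$, we have $|\hat s|=|(z^*)^\H\hat z^\MLE|\geq n(1-\epsilon^2/2)> n-n\epsilon$ (using $\epsilon<1/2$) and $|\hat s|\leq n$. Since $s_0=n$, $s_{\lceil n\epsilon/h\rceil}\leq n-n\epsilon$, and consecutive grid points differ by exactly $h$, there is some $k^*\in\{0,1,\ldots,\lceil n\epsilon/h\rceil\}$ with $\big|\,|\hat s|-s_{k^*}\,\big|<h$. Writing $z_{s_{k^*}}\in\mathcleq^n$ for the fixed point of $G(\cdot,s_{k^*},2\delta n)$, Lemma~\ref{lem:G_propoerties}(4) then yields
\[
\norm{\br{z^*|\hat s|+\sigma Wz}-\br{z^*s_{k^*}+\sigma Wz_{s_{k^*}}}}^2\ \leq\ 4n\big|\,|\hat s|-s_{k^*}\,\big|^2\ <\ 4nh^2 .
\]

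\emph{Transferring the event and summing over the grid.} Put $a_j=z_j^*|\hat s|+\sigma[Wz]_j$ and $b_j=z_j^*s_{k^*}+\sigma[Wz_{s_{k^*}}]_j$, so the previous display reads $\sum_j|a_j-b_j|^2<4nh^2$. For each $j$ the triangle inequality gives $\indic{|a_j|<2\delta n}\leq\indic{|b_j|<4\delta n}+\indic{|a_j-b_j|\geq 2\delta n}$. Summing and applying Markov's inequality, $\sum_j\indic{|a_j-b_j|\geq 2\delta n}\leq (2\delta n)^{-2}\sum_j|a_j-b_j|^2<h^2/(\delta^2 n)$; since the left-hand side is a nonnegative integer, it is $0$ whenever $h\leq\delta\sqrt n$, hence $\sum_j\indic{|a_j-b_j|\geq 2\delta n}\leq \frac{h^2}{\delta^2 n}\indic{h>\delta\sqrt n}$ in all cases. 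Finally, because $k^*\in\{0,\ldots,\lceil n\epsilon/h\rceil\}$, we may bound $\frac1n\sum_j\indic{|b_j|<4\delta n}\leq\sum_{0\leq k\leq\lceil n\epsilon/h\rceil}\frac1n\sum_j\indic{|z_j^*s_k+\sigma[Wz_{s_k}]_j|<4\delta n}$. Combining the three displayed bounds and carrying the factor $9$ produces exactly (\ref{eqn:main_sec_9}).

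All the estimates above are routine; the one step that needs care is the integer-rounding argument that yields the clean indicator $\indic{h>\delta\sqrt n}$. It works because the grid has spacing exactly $h$, so $\big|\,|\hat s|-s_{k^*}\,\big|<h$ is \emph{strict}; hence $\sum_j|a_j-b_j|^2<4nh^2$ strictly, and the count $\sum_j\indic{|a_j-b_j|\geq 2\delta n}$ is strictly below $1$ — therefore $0$ — when $h\leq\delta\sqrt n$. I do not expect a genuine obstacle here: the proposition is essentially the packaging of Lemmas~\ref{lem:MLE_extreme_points_upper_bound_2} and~\ref{lem:G_propoerties}(4) together with this net on $|\hat s|$.
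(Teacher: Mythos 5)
Your proof is correct and follows the same route as the paper: apply Lemma~\ref{lem:MLE_extreme_points_upper_bound_2} to move to the fixed point of $G(\cdot,|\hat s|,2\delta n)$, locate $|\hat s|\in[(1-\epsilon)n,n]$ on the grid via Lemma~\ref{lem:G_propoerties}(4), split the indicator with the triangle inequality, and absorb the data-dependent grid index by summing over all $k$. The only cosmetic difference is how you produce the factor $\indic{h>\delta\sqrt n}$: you use Markov's inequality together with the integer-valuedness of the count $\sum_j\indic{|a_j-b_j|\geq 2\delta n}$ and the strictness of $\big||\hat s|-s_{k^*}\big|<h$ (which indeed holds since the grid covers $[(1-\epsilon)n,n]$ with spacing $h$, so the nearest grid point is within $h/2$), whereas the paper invokes the elementary inequality $\sum_j\indic{|x_j|>t}\leq t^{-2}\norm{x}^2\indic{\norm{x}>t}$. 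Both give the identical bound.
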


Compared  Lemma \ref{lem:MLE_extreme_points_upper_bound_2}, Proposition \ref{prop:MLE_extreme_points_upper_bound_grid} avoids the appearance of $\hat s$ by using a grid $\{s_0,s_1,\ldots, s_{\lceil n\epsilon/h\rceil}\}$. Note that we can show $\hat s\in [(1-\epsilon)n,n]$ under the assumption $\ell_1(\hat z^\MLE,z^*)\leq \epsilon^2$. Hence, we only need to discretize the interval $[(1-\epsilon)n,n]$. In Proposition \ref{prop:MLE_extreme_points_upper_bound_grid}, $h$ is the distance among the points in the grid, a parameter can be optimized later. The use of the grid instead of $\hat s$ comes with costs, reflected in the two term in (\ref{eqn:main_sec_9}). Let $\hat k\in\{0,1,\ldots, \lceil n\epsilon/h\rceil\}$ be the index such that $s_{\hat k}$ is the one closest to $\hat s$ in the grid. First, note that $s_{\hat k}$ is still random as it depends on $\hat s$. To deal with, we upper bound the error associated with $s_{\hat k}$, $ \frac{1}{n}\sum_{j\in[n]}  \indic{\abs{z_j^*s_k + \sigma \left[ W z_{s_k} \right]_j} < 4 \delta n} $, by a summation of errors $ \sum_{0\leq k\leq \lceil n\epsilon/h\rceil} \br{ \frac{1}{n}\sum_{j\in[n]}  \indic{\abs{z_j^*s_k + \sigma \left[ W z_{s_k} \right]_j} < 4 \delta n} }  $, as each one is non-negative.
Second, the approximation error $|\hat s- s_{\hat k}|\leq h$ results in  the second term in (\ref{eqn:main_sec_9}). Nevertheless, the cost of having the summation of all indexes and the approximation error turns out to be negligible with a suitable choice of $h$.


The following Corollary \ref{cor:MLE_extreme_points_upper_bound_grid} simplifies the first term in (\ref{eqn:main_sec_9}) in order to make the analysis in Section \ref{sec:step2} easier.
For each $j\in[n]$, note that $\abs{z_j^*s_k + \sigma \left[ W z_{s_k} \right]_j} \geq \abs{z_j^*s_k} -  \abs{\sigma \left[ W z_{s_k} \right]_j} = s_k - \sigma \abs{\left[ W z_{s_k} \right]_j} $. Consequently, we have $\indic{\abs{z_j^*s_k + \sigma \left[ W z_{s_k} \right]_j} < 4 \delta n}    \leq \indic{s_k - \sigma \abs{\left[ W z_{s_k} \right]_j}  < 4\delta n}  = \indic{\sigma \abs{  \left[ W z_{s_k} \right]_j} > s_k- 4 \delta n}$, leading to the corollary.
\begin{corollary}\label{cor:MLE_extreme_points_upper_bound_grid}
Under the same conditions as in Proposition \ref{prop:MLE_extreme_points_upper_bound_grid}, we have
\begin{align*}
&\frac{1}{n}\sum_{j\in[n]}\indic{\abs{[Y\hat z^\MLE]_j}<\delta n} \\
& \leq   9 \sum_{0\leq k\leq \lceil n\epsilon/h\rceil} \br{ \frac{1}{n}\sum_{j\in[n]}  \indic{\sigma \abs{  \left[ W z_{s_k} \right]_j} > s_k- 4 \delta n} }  +  \frac{9h^2}{\delta^2 n^2}\indic{h>\delta\sqrt{n}}.
\end{align*}
\end{corollary}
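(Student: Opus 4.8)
The statement to prove is Corollary~\ref{cor:MLE_extreme_points_upper_bound_grid}, which follows from Proposition~\ref{prop:MLE_extreme_points_upper_bound_grid} by the elementary pointwise bound on the indicators already sketched in the text preceding it.

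\textbf{Plan.} The proof is a direct application of Proposition~\ref{prop:MLE_extreme_points_upper_bound_grid} combined with a coordinatewise comparison of indicator functions. First, I would invoke Proposition~\ref{prop:MLE_extreme_points_upper_bound_grid}: under the stated hypotheses ($\epsilon\in(0,1/2)$, $h>0$, $\delta\geq 2\sigma\norm{W}/n$, and $\ell_1(\hat z^\MLE,z^*)\leq\epsilon^2$), with $s_k = n-kh$ and $z_{s_k}$ the fixed point of $G(\cdot,s_k,2\delta n)$, we have
\begin{align*}
\frac{1}{n}\sum_{j\in[n]}\indic{\abs{[Y\hat z^\MLE]_j}<\delta n} \leq 9\sum_{0\leq k\leq \lceil n\epsilon/h\rceil}\br{\frac{1}{n}\sum_{j\in[n]}\indic{\abs{z_j^* s_k + \sigma[Wz_{s_k}]_j} < 4\delta n}} + \frac{9h^2}{\delta^2 n^2}\indic{h>\delta\sqrt{n}}.
\end{align*}
It then suffices to show that for each $k$ and each $j\in[n]$, $\indic{\abs{z_j^* s_k + \sigma[Wz_{s_k}]_j} < 4\delta n} \leq \indic{\sigma\abs{[Wz_{s_k}]_j} > s_k - 4\delta n}$, since summing this over $j$ and $k$ and multiplying by $9/n$ (leaving the error term $\frac{9h^2}{\delta^2 n^2}\indic{h>\delta\sqrt{n}}$ untouched) yields exactly the claimed bound.

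\textbf{The pointwise step.} Fix $k$ and $j$. Since $|z_j^*|=1$, the reverse triangle inequality gives $\abs{z_j^* s_k + \sigma[Wz_{s_k}]_j} \geq \abs{z_j^* s_k} - \abs{\sigma[Wz_{s_k}]_j} = |s_k| - \sigma\abs{[Wz_{s_k}]_j}$. Moreover $s_k = n-kh$ and the grid index $k$ ranges over $0,\dots,\lceil n\epsilon/h\rceil$, so $s_k \geq n - \lceil n\epsilon/h\rceil h \geq n(1-\epsilon) - h$; in the regime of interest $s_k$ is positive (and indeed we only care about indices with $s_k > 4\delta n$, as otherwise the right-hand indicator is trivially $1$), so $|s_k| = s_k$. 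Hence whenever $\abs{z_j^* s_k + \sigma[Wz_{s_k}]_j} < 4\delta n$ we must have $s_k - \sigma\abs{[Wz_{s_k}]_j} < 4\delta n$, i.e.\ $\sigma\abs{[Wz_{s_k}]_j} > s_k - 4\delta n$. This proves the indicator inequality. Substituting into the bound from Proposition~\ref{prop:MLE_extreme_points_upper_bound_grid} completes the proof.

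\textbf{Main obstacle.} There is essentially no obstacle here: the corollary is a one-line reformulation, and all the substantive work (the grid discretization and the error control in the second term) is already done in Proposition~\ref{prop:MLE_extreme_points_upper_bound_grid} and Lemma~\ref{lem:MLE_extreme_points_upper_bound_2}. The only point requiring a moment of care is verifying that $|z_j^* s_k| = s_k$, which holds because $s_k = n - kh \geq 0$ for all relevant $k$ under the assumption $\epsilon < 1/2$ (so $\lceil n\epsilon/h\rceil h$ does not exceed $n$ by more than one grid step, and in any case the indicator bound is vacuously true when $s_k \leq 4\delta n$). No independence or probabilistic argument enters at this stage; the corollary is purely deterministic given the fixed points $z_{s_k}$.
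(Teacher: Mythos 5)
Your proof is correct and matches the paper's argument exactly: the paper derives the corollary inline in the paragraph preceding it, by applying Proposition~\ref{prop:MLE_extreme_points_upper_bound_grid} and then using the reverse triangle inequality pointwise, $\abs{z_j^*s_k + \sigma[Wz_{s_k}]_j} \geq s_k - \sigma\abs{[Wz_{s_k}]_j}$, to compare indicators. Your extra remark on the positivity of $s_k$ (and the vacuous case $s_k\leq 4\delta n$) is a sound small addition, but the core reasoning is identical.
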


With Corollary \ref{cor:MLE_extreme_points_upper_bound_grid}, we boil down the problem of upper bounding $\frac{1}{n}\sum_{j\in[n]}\indic{\abs{[Y\hat z^\MLE]_j}<\delta n}$ into a problem of analyzing the fixed point of $G(\cdot,s,t)$ for given $s,t$. Specifically, let $z$ be the fixed point, we want to analyze (\ref{eqn:main_sec_6}), which is the focus of the next section.

\subsection{Step 2: Leave-One-Out Analysis for Fixed Points of $G(\cdot,s,t)$}\label{sec:step2}
In the step, we are going to study the fixed point of $G(\cdot,s,t)$ to provide an upper bound for (\ref{eqn:main_sec_6}). As we outlined in Section \ref{sec:high}, the key is to decouple the dependence between $W$ and $z$ in the quantity $Wz$ using the leave-one-out technique.

For any $s \in\mathc,t>0$, define $z^{(0)}=z^*$ and
\begin{align}\label{eqn:zt_def}
z^{(T)} = G(z^{(T-1)},s,t),\forall T\in\mathbb{N}.
\end{align}
For any $j\in[n]$, define $W^{(-j)}\in\mathc^{n\times n}$ such that
\begin{align*}
W^{(-j)}_{k,l} =\begin{cases}
W_{k,l},\forall k\neq j \text{ and }l\neq j,\\
0,\text{ o.w..}
\end{cases}
\end{align*}
We refer $W^{(-j)}$ as a leave-one-out counterpart of $W$, as compared to $W$, it zeros out its $j$th column and row. As a result, it and its functions are independent of $W_{j\cdot}$.
Define $G^{(-j)}:\mathcleq^n \times \mathc\times \mathr \rightarrow \mathcleq^n$ such that for any $z\in\mathcleq^n$, the $j$th coordinate of $G^{(-j)}(z,s,t)$ is
\begin{align*}
[G^{(-j)}(z,s,t)]_j = g_t([z^*s + \sigma W^{(-j)} z]_j),\forall j\in[n].
\end{align*}
Define $z^{(0,-j)} = z^*$ and 
\begin{align}\label{eqn:ztj_def}
z^{(T,-j)} =  G^{(-j)}(z^{(T-1,-j)},s,t), \forall T\in\mathbb{N}.
\end{align}
That is, $G^{(-j)}(z,s,t)$ is a counterpart of $G(z,s,t)$ that uses $W^{(-j)}$ instead of $W$. Consequently, the sequence $\{z^{(T,-j)}\}_{T\geq 0}$ is independent of $W_{j\cdot}$. 

Note that the existence and uniqueness of the limit $z^{(\infty)}$ for the sequence $\{z^{(T)}\}_{T\geq 0}$ is guaranteed as long as $t\geq 2\sigma \norm{W}$, according to the properties of $G$ in Lemma \ref{lem:G_propoerties}. Similar properties hold for $G^{(-j)}$ (see Lemma \ref{lem:G_j_properties}), with which we can also show the existence and uniqueness of the limit for the sequence $\{z^{(T,-j)}\}_{T\geq 0}$ when $t\geq 2\sigma \norm{W}$. These lead to the following lemma about the limits and fixed points.
\begin{lemma}\label{lem:G_G_j_fixed_points}
For any $s\in\mathc$, $t\geq 2\sigma\norm{W}$, and $j\in[n]$, let $z\in\mathcleq^n$ be the fixed point of $G(\cdot,s,t)$ and let $z^{(-j)}\in\mathcleq^n$ be the fixed point of $G^{(-j)}(\cdot,s,t)$. Then
$z = \lim_{T\rightarrow\infty } z^{(T)},\text{ and }z^{(-j)}=\lim_{T\rightarrow\infty} z^{(T,-j)}.$
\end{lemma}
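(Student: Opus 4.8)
\textbf{Proof plan for Lemma \ref{lem:G_G_j_fixed_points}.} The statement asserts that the iterative sequences $\{z^{(T)}\}_{T\geq 0}$ and $\{z^{(T,-j)}\}_{T\geq 0}$ converge to the fixed points $z$ and $z^{(-j)}$ of $G(\cdot,s,t)$ and $G^{(-j)}(\cdot,s,t)$, respectively. The plan is to invoke the contraction machinery already established for $G$ in Lemma \ref{lem:G_propoerties} and to note that the identical reasoning, applied verbatim to $G^{(-j)}$, gives the corresponding statement for $G^{(-j)}$. Concretely, the first claim $z=\lim_{T\to\infty} z^{(T)}$ is \emph{literally} Part 3 of Lemma \ref{lem:G_propoerties}: under the hypothesis $t\geq 2\sigma\norm{W}$, that lemma tells us $G(\cdot,s,t)$ has a unique fixed point, and that this fixed point equals the limit of the sequence obtained by iterating $G(\cdot,s,t)$ starting from $z^{(0)}=z^*$, which is exactly the sequence $\{z^{(T)}\}$ defined in (\ref{eqn:zt_def}).

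For the second claim, the key observation is that $G^{(-j)}$ is structurally identical to $G$, with the only difference being that the noise matrix $W$ is replaced by $W^{(-j)}$. Since $W^{(-j)}$ is obtained from $W$ by zeroing out one row and one column, we have $\norm{W^{(-j)}} \leq \norm{W}$ (zeroing entries of a matrix can only decrease its operator norm — this follows because $W^{(-j)} = P W P$ where $P$ is the diagonal projection onto coordinates $\neq j$, so $\norm{W^{(-j)}}\leq \norm{P}^2\norm{W} = \norm{W}$). Hence the hypothesis $t\geq 2\sigma\norm{W}$ implies $t\geq 2\sigma\norm{W^{(-j)}}$, so all three relevant parts of Lemma \ref{lem:G_propoerties} — the Lipschitz bound with constant $t^{-1}\sigma\norm{W^{(-j)}}\leq 1/2$, the geometric decay of consecutive differences, and the resulting existence, uniqueness, and limit characterization of the fixed point — apply to $G^{(-j)}(\cdot,s,t)$ as well. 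The paper itself flags this by referring to ``Lemma \ref{lem:G_j_properties}'' as the analogue of Lemma \ref{lem:G_propoerties} for $G^{(-j)}$; so I would simply cite that lemma and conclude $z^{(-j)} = \lim_{T\to\infty} z^{(T,-j)}$, where $\{z^{(T,-j)}\}$ is the sequence from (\ref{eqn:ztj_def}) started at $z^{(0,-j)}=z^*$.

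There is essentially no obstacle here: this lemma is a bookkeeping step that packages the contraction results of Lemma \ref{lem:G_propoerties} (and its leave-one-out twin) into the precise form needed for the induction argument of Lemma \ref{lem:leave_one_out_close} in Step 2. The only point requiring a line of justification is the inequality $\norm{W^{(-j)}}\leq\norm{W}$, which ensures the contraction hypothesis transfers from $G$ to $G^{(-j)}$ under the single assumption $t\geq 2\sigma\norm{W}$ stated in the lemma. If for some reason one did not want to rely on a separately stated Lemma \ref{lem:G_j_properties}, one could instead observe directly that the entire proof of Lemma \ref{lem:G_propoerties} is noise-matrix-agnostic: replacing $W$ by any matrix $M$ with $\norm{M}\leq\norm{W}$ leaves every inequality in that proof valid, and applying this with $M = W^{(-j)}$ yields the second claim. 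Either route gives the result immediately.
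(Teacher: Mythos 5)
Your proposal is correct and follows exactly the paper's route: cite Part~3 of Lemma~\ref{lem:G_propoerties} for the first limit, and cite Lemma~\ref{lem:G_j_properties} (the $G^{(-j)}$ counterpart, proved verbatim from $\norm{W^{(-j)}}\leq\norm{W}$) for the second. Nothing to add.
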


Note that $\norm{z^{(0)} - z^{(0,-j)}}=0$. With mathematical induction, the following lemma shows $z^{(T)}$ and $z^{(T,-j)}$ are  uniformly close for all $T\in\mathbb{N}$ and so are the limits.  %
\begin{lemma}\label{lem:leave_one_out_close}
Under the same conditions as in Lemma \ref{lem:G_G_j_fixed_points}, we have
$\norm{z^{(T)} - z^{(T,-j)}}\leq 3, \forall T\in\mathbb{N}.$
As a consequence,
$\norm{z - z^{(-j)}}\leq 3,\forall j\in[n].$
\end{lemma}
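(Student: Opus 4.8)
The plan is to prove $\norm{z^{(T)} - z^{(T,-j)}} \leq 3$ by induction on $T$, and then pass to the limit using Lemma \ref{lem:G_G_j_fixed_points}. The base case $T=0$ is immediate since $z^{(0)} = z^{(0,-j)} = z^*$, so $\norm{z^{(0)} - z^{(0,-j)}} = 0 \leq 3$. For the inductive step, suppose $\norm{z^{(T-1)} - z^{(T-1,-j)}} \leq 3$. I would bound $\norm{z^{(T)} - z^{(T,-j)}}$ by inserting an intermediate term: write
\begin{align*}
\norm{z^{(T)} - z^{(T,-j)}} \leq \norm{G(z^{(T-1)},s,t) - G(z^{(T-1,-j)},s,t)} + \norm{G(z^{(T-1,-j)},s,t) - G^{(-j)}(z^{(T-1,-j)},s,t)}.
\end{align*}
The first term is handled by the Lipschitz property (item 1 of Lemma \ref{lem:G_propoerties}): it is at most $t^{-1}\sigma\norm{W}\norm{z^{(T-1)} - z^{(T-1,-j)}} \leq \tfrac12 \cdot 3 = \tfrac32$, using $t \geq 2\sigma\norm{W}$ and the inductive hypothesis. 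The second term measures how much $G$ and $G^{(-j)}$ differ at the same input; since $g_t$ is $t^{-1}$-Lipschitz (Lemma \ref{lem:gt}) coordinatewise and $G$, $G^{(-j)}$ differ only in replacing $W$ by $W^{(-j)}$, this term is at most $t^{-1}\sigma\norm{(W - W^{(-j)})z^{(T-1,-j)}}$.

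The key observation making the second term small is that $W - W^{(-j)}$ is supported only on the $j$th row and $j$th column, and crucially $z^{(T-1,-j)}$ is a vector with entries of modulus at most $1$. One should check that for a vector $v$ with $\norm{v}_\infty \leq 1$, the quantity $\norm{(W - W^{(-j)})v}$ is controlled: its only nonzero coordinate contributions come from the $j$th row $W_{j\cdot}$ (acting on $v$) and the $j$th column (scaled by $v_j$). More directly, $G$ and $G^{(-j)}$ produce identical output in every coordinate $k \neq j$ whenever the input agrees there on coordinates other than $j$ — wait, actually $[G(z,s,t)]_k = g_t([z^*s + \sigma Wz]_k)$ and $[Wz]_k = \sum_l W_{kl}z_l$ involves $W_{kj}$, so they differ in coordinate $k$ through the single term $W_{kj}z_j$, while in coordinate $j$ they differ through the whole row $\sigma W_{j\cdot}z$ versus $0$. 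Hence $\norm{G(v,s,t) - G^{(-j)}(v,s,t)}^2 \leq t^{-2}\sigma^2\big(\sum_{k\neq j}|W_{kj}|^2|v_j|^2 + |W_{j\cdot}v|^2\big) \leq t^{-2}\sigma^2\big(\norm{W_{\cdot j}}^2 + \norm{W_{j\cdot}}^2\norm{v}^2\big)$, and both $\norm{W_{\cdot j}}$ and $\norm{W_{j\cdot}}$ are at most $\norm{W}$ while $\norm{v} \leq \sqrt{n}$. This bound $\lesssim t^{-1}\sigma\norm{W}\sqrt{n}$ is too crude to give a constant.

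The resolution — and I expect this to be the main obstacle — is that one must not bound $\norm{z^{(T-1,-j)}}$ by $\sqrt{n}$ but instead exploit that $z^{(T-1,-j)}$ is close to $z^*$ (via Lemma \ref{lem:G_propoerties} item 4 or an analogous estimate, since its construction is a contraction iteration started from $z^*$), so that $[W^{(-j)}z^{(T-1,-j)}]_k$ and the relevant inner products are of moderate size; alternatively, and more cleanly, one bounds the second term \emph{coordinatewise} and notes that since $G^{(-j)}$ ignores the $j$th row entirely, $G(z^{(T-1,-j)},s,t)$ and $G^{(-j)}(z^{(T-1,-j)},s,t)$ differ only in the $j$th coordinate (because $[W^{(-j)}z]_k = [Wz]_k$ for $k \neq j$ only if $z_j$ contributes nothing — but $W^{(-j)}_{kj} = 0$ so $[W^{(-j)}z]_k = \sum_{l \neq j} W_{kl}z_l$, differing from $[Wz]_k$ by $W_{kj}z_j$). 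So in fact one needs $z^{(T-1,-j)}_j$ small. Indeed $z^{(T-1,-j)}_j = [G^{(-j)}(z^{(T-2,-j)},s,t)]_j = g_t(z^*_j s)$ has modulus exactly $\min\{|s|/t, 1\}$, controllable. I would therefore refine the induction: carry along auxiliary bounds on $\norm{z^{(T,-j)} - z^*}$ and on $|z^{(T,-j)}_j|$, plug these into the two-term split above, and verify the arithmetic closes at the constant $3$. Passing $T \to \infty$ then gives $\norm{z - z^{(-j)}} \leq 3$ by Lemma \ref{lem:G_G_j_fixed_points} and continuity of the norm.
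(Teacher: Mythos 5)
Your overall strategy (induction on $T$, a two-term split isolating the discrepancy caused by $W$ versus $W^{(-j)}$, then pass to the limit) matches the paper's, but you stop short of the one trick that actually closes the argument, and the alternative ``fix'' you propose would not work.

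You correctly identify that the problematic coordinate is $k=j$: there the second term in your split, bounded coordinatewise via the $t^{-1}$-Lipschitz property of $g_t$, gives $t^{-1}\sigma\,|W_{j\cdot}z^{(T-1,-j)}|$, which is only $\lesssim t^{-1}\sigma\norm{W}\sqrt{n}\lesssim\sqrt{n}$, not a constant. Your proposed remedy is to carry additional inductive invariants on $\|z^{(T,-j)}-z^*\|$ and on $|z^{(T,-j)}_j|$. This does not help: the dangerous quantity is $|W_{j\cdot}z^{(T-1,-j)}|$, the inner product of the full $j$th row of $W$ with a vector whose Euclidean norm is $\Theta(\sqrt{n})$, so controlling $|z^{(T-1,-j)}_j|$ (which only enters the $k\neq j$ coordinates) or $\|z^{(T,-j)}-z^*\|$ leaves $|W_{j\cdot}z^{(T-1,-j)}|$ at order $\norm{W}\sqrt{n}$; this is a purely deterministic analysis, so one cannot appeal to cancellation or concentration. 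The resolution the paper uses is much simpler: for the $j$th coordinate alone, do not apply the Lipschitz bound at all. Since $z^{(T)},z^{(T,-j)}\in\mathcleq^n$, one has the trivial estimate $|z^{(T)}_j - z^{(T,-j)}_j|\leq 2$. Combined with the $k\neq j$ coordinates (where, as you observe, $[(W-W^{(-j)})\,\cdot\,]_k = W_{kj}(\cdot)_j$ contributes only $|W_{kj}|$, summing to $\norm{W_j}^2\leq\norm{W}^2$), this gives the recursion $\norm{z^{(T)}-z^{(T,-j)}}^2\leq \tfrac{9}{2}+\tfrac12\norm{z^{(T-1)}-z^{(T-1,-j)}}^2$, whose fixed point is $9$, yielding the bound $3$. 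Without using the trivial bound on the $j$th coordinate, the induction does not close.
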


Note that both $z$ and $ z^{(-j)}$ are length-$n$ vectors in $\mathcleq^n$. Lemma \ref{lem:leave_one_out_close} means that they are pretty close to each other and consequently one can be approximated by the other one, leading to the following proposition.


\begin{proposition}\label{prop:extreme_points_leave_one_out}
Under the same conditions as in Lemma \ref{lem:G_G_j_fixed_points}, we have
\begin{align*}
\frac{1}{n} \sum_{j\in[n]}  \indic{\sigma \abs{[Wz]_j} \geq \abs{ s } - r} \leq  \frac{1}{n} \sum_{j\in[n]}\indic{\sigma \abs{W_{j\cdot} z^{(-j )}}  \geq \abs{ s } - r - 3\sigma \norm{W}},\quad\forall r\in\mathr.
\end{align*}
\end{proposition}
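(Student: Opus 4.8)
The plan is to prove this by pointwise comparison of the two indicator sums, coordinate by coordinate, using only the uniform closeness of $z$ to its leave-one-out counterpart $z^{(-j)}$ established in Lemma \ref{lem:leave_one_out_close} together with the operator-norm bound on $W$. Fix $j\in[n]$ and $r\in\mathr$. The goal is to show that whenever the $j$th term on the left-hand side is $1$, the $j$th term on the right-hand side is also $1$; since all terms are nonnegative and bounded by $1$, this gives the claimed inequality after dividing by $n$ and summing over $j$.

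\medskip

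First I would rewrite $[Wz]_j = W_{j\cdot} z$ and insert $z^{(-j)}$:
\begin{align*}
\abs{W_{j\cdot} z} \leq \abs{W_{j\cdot} z^{(-j)}} + \abs{W_{j\cdot}(z - z^{(-j)})} \leq \abs{W_{j\cdot} z^{(-j)}} + \norm{W_{j\cdot}}\,\norm{z - z^{(-j)}}.
\end{align*}
By Lemma \ref{lem:leave_one_out_close}, $\norm{z - z^{(-j)}} \leq 3$, and since $W_{j\cdot}$ is a row of $W$, $\norm{W_{j\cdot}} \leq \norm{W}$. Hence $\sigma\abs{W_{j\cdot} z} \leq \sigma\abs{W_{j\cdot} z^{(-j)}} + 3\sigma\norm{W}$. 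Therefore, if $\sigma\abs{[Wz]_j} \geq \abs{s} - r$, then $\sigma\abs{W_{j\cdot} z^{(-j)}} \geq \sigma\abs{[Wz]_j} - 3\sigma\norm{W} \geq \abs{s} - r - 3\sigma\norm{W}$, which is exactly the event in the $j$th term on the right-hand side. This proves $\indic{\sigma\abs{[Wz]_j} \geq \abs{s} - r} \leq \indic{\sigma\abs{W_{j\cdot} z^{(-j)}} \geq \abs{s} - r - 3\sigma\norm{W}}$ for every $j$, and summing over $j\in[n]$ and dividing by $n$ yields the proposition.

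\medskip

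There is really no serious obstacle here: the proposition is a bookkeeping consequence of Lemma \ref{lem:leave_one_out_close}, and the only things to be careful about are (i) checking that the standing hypotheses of Lemma \ref{lem:G_G_j_fixed_points} (namely $t \geq 2\sigma\norm{W}$, so that $z$ and $z^{(-j)}$ exist and are unique fixed points and arise as the limits in Lemma \ref{lem:G_G_j_fixed_points}) are inherited — they are, since the proposition is stated "under the same conditions" — and (ii) that the row-norm bound $\norm{W_{j\cdot}} \leq \norm{W}$ is applied correctly, which holds because $W = W^\H$ so rows and columns have the same norms, each bounded by $\opnorm{W}$. The mild subtlety worth flagging is that we do \emph{not} claim $W_{j\cdot} z^{(-j)}$ is independent of $W_{j\cdot}$ here — it obviously is not, since $z^{(-j)}$ is being paired with $W_{j\cdot}$ — but that independence is not needed for this deterministic inequality; it is exploited only later, in the probabilistic analysis of the right-hand side, where $z^{(-j)}$ being independent of $W_{j\cdot}$ makes $W_{j\cdot} z^{(-j)}$ a Gaussian with known variance.
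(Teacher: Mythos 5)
Your proof is correct and follows essentially the same argument as the paper: decompose $W_{j\cdot}z = W_{j\cdot}z^{(-j)} + W_{j\cdot}(z - z^{(-j)})$, bound the error term by $\norm{W}\,\norm{z-z^{(-j)}} \le 3\norm{W}$ via Lemma \ref{lem:leave_one_out_close}, and compare indicators pointwise before summing. The only cosmetic difference is that you bound $\abs{W_{j\cdot}(z-z^{(-j)})}$ by Cauchy--Schwarz through the row norm $\norm{W_{j\cdot}} \le \norm{W}$, while the paper passes through the full vector $\norm{W(z-z^{(-j)})}$; both land on the same bound.
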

In Proposition \ref{prop:extreme_points_leave_one_out}, the left-hand and right-hand sides of the display correspond to (\ref{eqn:main_sec_6}) and (\ref{eqn:main_sec_10}), respectively. Note that for each $j\in[n]$, $z^{(-j)}$ is independent of $W_{j\cdot}$. In this way, we manage to decouple the dependence in $Wz$. The cost of replacing $z$ by $z^{(-j)}$ is $3\sigma \norm{W}$, which means the threshold in (\ref{eqn:main_sec_10}) is slightly smaller than that in (\ref{eqn:main_sec_6}).

\subsection{Exponential Bounds}\label{sec:exponential}
In Sections \ref{sec:step1} and \ref{sec:step2}, we carry out detailed analysis for the MLE to upper bound the quantity $\frac{1}{n}\sum_{j \in [n]}\indic{\abs{[Y \hat{z}^\MLE]_j} < \delta n}$, which is the main term appearing in $\ell_m(\hat V^{\BM,m},\hat z^\MLE)$, the distance between the MLE and the BM factorization in Corollary \ref{cor:fixed_points_MLE}. With Corollary \ref{cor:MLE_extreme_points_upper_bound_grid} and Proposition \ref{prop:extreme_points_leave_one_out}, Corollary \ref{cor:fixed_points_MLE} leads to the following lemma regarding $\ell_m(\hat V^{\BM,m},\hat z^\MLE)$.

\begin{lemma}\label{lem:deterministic}
Suppose $m\in\mathbb{N}\setminus\{1\}$, $\epsilon\in(0,1/2)$, $h>0$, and  $\delta \geq 2\sqrt{2}\br{6\epsilon + \frac{\sigma\norm{W}}{n}}$. Assume $ \ell_1(\hat z^\MLE,z^*)\leq \epsilon^2$ and $\ell_m(\hat V^{\BM,m},z^*)\leq \epsilon^2$ are satisfied. For each $k=0,1,2,\ldots, \lceil n\epsilon/h\rceil$, define $s_k = n- k h\in\mathr$ and let $z_{s_k}^{(-j)}\in\mathcleq^n$ be the fixed point of  $G^{(-j)}(\cdot,s_k,2\delta n)$ for each $j\in[n]$. Then we have
\begin{align*}
 &\ell_m(\hat V^{\BM,m},\hat z^\MLE)\\
 &\leq  72\sum_{0\leq k\leq \lceil n\epsilon/h\rceil} \br{ \frac{1}{n}\sum_{j\in[n]}  \indic{\sigma \abs{W_{j\cdot} z_{s_k}^{(-j)}} > \br{1-\epsilon-4\delta - \frac{3\sigma\norm{W}}{n}}n-h} }  +  \frac{72h^2}{\delta^2 n^2}\indic{h>\delta\sqrt{n}}.
\end{align*}
\end{lemma}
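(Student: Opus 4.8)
\textbf{Proof proposal for Lemma \ref{lem:deterministic}.}

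The plan is to chain together the three main results established so far: Corollary \ref{cor:fixed_points_MLE}, which bounds $\ell_m(\hat V^{\BM,m},\hat z^\MLE)$ by a quantity of the MLE; Corollary \ref{cor:MLE_extreme_points_upper_bound_grid}, which replaces that MLE quantity by a sum over a deterministic grid of scalars $s_k$ involving the fixed points $z_{s_k}$ of $G(\cdot,s_k,2\delta n)$; and Proposition \ref{prop:extreme_points_leave_one_out}, which swaps $z_{s_k}$ for its leave-one-out counterpart $z_{s_k}^{(-j)}$. First I would invoke Corollary \ref{cor:fixed_points_MLE}: under the standing assumptions $\ell_1(\hat z^\MLE,z^*)\leq\epsilon^2$ and $\ell_m(\hat V^{\BM,m},z^*)\leq\epsilon^2$, and for any $\delta\geq 2\sqrt{2}(6\epsilon+\sigma\norm{W}/n)$, we have $\ell_m(\hat V^{\BM,m},\hat z^\MLE)\leq \frac{8}{n}\sum_{j\in[n]}\indic{\abs{[Y\hat z^\MLE]_j}<\delta n}$. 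Note the hypothesis on $\delta$ here matches that in the present lemma, so no compatibility check is needed. I also need $\delta\geq 2\sigma\norm{W}/n$ for the later corollaries, but this follows from $\delta\geq 2\sqrt{2}(6\epsilon+\sigma\norm{W}/n)\geq 2\sqrt{2}\,\sigma\norm{W}/n\geq 2\sigma\norm{W}/n$.

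Next I would apply Corollary \ref{cor:MLE_extreme_points_upper_bound_grid} with the given $\epsilon,h,\delta$: since $\ell_1(\hat z^\MLE,z^*)\leq\epsilon^2$ holds, it gives
\begin{align*}
\frac{1}{n}\sum_{j\in[n]}\indic{\abs{[Y\hat z^\MLE]_j}<\delta n}\leq 9\sum_{0\leq k\leq\lceil n\epsilon/h\rceil}\left(\frac{1}{n}\sum_{j\in[n]}\indic{\sigma\abs{[Wz_{s_k}]_j}>s_k-4\delta n}\right)+\frac{9h^2}{\delta^2 n^2}\indic{h>\delta\sqrt{n}},
\end{align*}
where $z_{s_k}$ is the fixed point of $G(\cdot,s_k,2\delta n)$. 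Combining with the previous step yields a factor of $8\times 9=72$ in front. It remains to handle each inner sum. Here I would apply Proposition \ref{prop:extreme_points_leave_one_out} for each fixed $k$ with the choice $s=s_k$, $t=2\delta n$ (which satisfies $t\geq 2\sigma\norm{W}$ because $2\delta n\geq 4\sigma\norm{W}\geq 2\sigma\norm{W}$), and $r=4\delta n$. Since $s_k=n-kh>0$ is real and nonnegative for $k\leq\lceil n\epsilon/h\rceil$ (using $\epsilon<1/2$ so $n\epsilon<n$, hence $\abs{s_k}=s_k$), the proposition gives
\begin{align*}
\frac{1}{n}\sum_{j\in[n]}\indic{\sigma\abs{[Wz_{s_k}]_j}>s_k-4\delta n}&\leq\frac{1}{n}\sum_{j\in[n]}\indic{\sigma\abs{[Wz_{s_k}]_j}\geq s_k-4\delta n}\\
&\leq\frac{1}{n}\sum_{j\in[n]}\indic{\sigma\abs{W_{j\cdot}z_{s_k}^{(-j)}}\geq s_k-4\delta n-3\sigma\norm{W}},
\end{align*}
where $z_{s_k}^{(-j)}$ is the fixed point of $G^{(-j)}(\cdot,s_k,2\delta n)$, matching the definition in the lemma statement. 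Rewriting $s_k-4\delta n-3\sigma\norm{W}=(n-kh)-4\delta n-3\sigma\norm{W}=(1-4\delta-3\sigma\norm{W}/n)n-kh$, and then crudely lower-bounding this by $(1-\epsilon-4\delta-3\sigma\norm{W}/n)n-h$ — valid since $kh\leq\lceil n\epsilon/h\rceil\cdot h$ may exceed $n\epsilon$, but a cleaner route is to note $(1-4\delta-3\sigma\norm{W}/n)n-kh\geq(1-\epsilon-4\delta-3\sigma\norm{W}/n)n-h$ precisely when $kh\leq n\epsilon+h$, i.e. $k\leq n\epsilon/h+1$, which holds since $k\leq\lceil n\epsilon/h\rceil\leq n\epsilon/h+1$ — and using monotonicity of the indicator (a smaller threshold makes the indicator larger), we replace the threshold to obtain the uniform bound appearing in the lemma. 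Summing over $0\leq k\leq\lceil n\epsilon/h\rceil$ and carrying along the $\frac{9h^2}{\delta^2n^2}\indic{h>\delta\sqrt{n}}$ term (now multiplied by $8$ to give $72h^2/(\delta^2n^2)$) completes the proof.

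I expect the only genuinely delicate point to be the bookkeeping in the final threshold-replacement step — specifically verifying that $s_k-4\delta n-3\sigma\norm{W}\geq(1-\epsilon-4\delta-3\sigma\norm{W}/n)n-h$ for every $k$ in the summation range, which boils down to the elementary inequality $\lceil n\epsilon/h\rceil\cdot h\leq n\epsilon+h$. Everything else is a direct substitution: Corollary \ref{cor:fixed_points_MLE} $\to$ Corollary \ref{cor:MLE_extreme_points_upper_bound_grid} $\to$ Proposition \ref{prop:extreme_points_leave_one_out}, tracking the multiplicative constant $8\cdot 9=72$ and confirming the hypothesis $t=2\delta n\geq 2\sigma\norm{W}$ wherever the properties of $G$ and $G^{(-j)}$ are invoked. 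No new estimates are required; the lemma is purely a deterministic assembly of the pieces built in Sections \ref{sec:2} and \ref{sec:3}.
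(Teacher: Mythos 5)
Your proposal is correct and follows essentially the same route as the paper: apply Corollary \ref{cor:fixed_points_MLE}, then Corollary \ref{cor:MLE_extreme_points_upper_bound_grid}, then Proposition \ref{prop:extreme_points_leave_one_out} term-by-term with $t=2\delta n$, and uniformize the threshold over $k$ via $\lceil n\epsilon/h\rceil h\leq n\epsilon+h$, giving the constant $8\cdot 9=72$. One small simplification you could make: the worry about $\abs{s_k}=s_k$ is moot, since $r$ in Proposition \ref{prop:extreme_points_leave_one_out} is an arbitrary real and only the combination $\abs{s}-r$ enters, so one can match any desired threshold on the left regardless of the sign of $s_k$.
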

Lemma \ref{lem:deterministic} provides a deterministic upper bound for the difference between the MLE and the BM factorization, $\ell_m(\hat V^{\BM,m},\hat z^\MLE)$, as all the analysis carried out so far is completely deterministic.  Despite being  complicated, it makes it ready for us to obtain explicit expression for
using the fact that each entry of $W$ is Gaussian. To achieve this,
first note that Lemma \ref{lem:deterministic} involves $\norm{W}$ which requires an upper bound. It is known in the literature that  there exists some absolute constant $C_0>0$ such that
\begin{align}\label{eqn:W_opnorm_upper_bound}
\pbr{\norm{W}\leq C_0\sqrt{n}}\geq 1-n^{-10}.
\end{align}
Such concentration result is standard and is a direct consequence of Proposition 2.4 of \cite{rudelson2010non}. Regarding the display in Lemma \ref{lem:deterministic}, if we take expectations on both sides, then the indicator functions on its right-hand side become tail probabilities of Gaussian distributions, which are exponentially small. If we do not take expectations, then when $\sigma$ is small enough, all the indicator functions are equal to 0 with high probability, leading to the tightness $\ell_m(\hat V^{\BM,m},\hat z^\MLE)=0$. In both cases, we need to pick a suitable $h$ so that the second term in the display of the lemma is negligible. In this way, we obtain the following exponential bound in Theorem \ref{thm:upper_bound_Gaussian}. Theorem \ref{thm:intro2} is its immediate consequence.


\begin{theorem}\label{thm:upper_bound_Gaussian}
Suppose $m\in\mathbb{N}\setminus\{1\}$. There exist constants $C_1,C_2>0$ that only depend on $C_0$ such that:
\begin{enumerate}
\item When $\frac{n}{\sigma^2}\geq C_1$, we have 
\begin{align}\label{eqn:thm4_1}
\E \ell_m(\hat V^{\BM,m},\hat z^\MLE) \leq \ebr{-\frac{n}{8\sigma^2}} + n^{-10}.
\end{align}
\item When $\frac{n}{\sigma^2}\geq \max\{C_2,9\log n\}$, we have $\ell_m(\hat V^{\BM,m},\hat z^\MLE)=0$ with probability at least $1-n^{-1}$.
\end{enumerate}
\end{theorem}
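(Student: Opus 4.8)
\textbf{Proof plan for Theorem \ref{thm:upper_bound_Gaussian}.}
The plan is to feed the deterministic bound of Lemma \ref{lem:deterministic} into the Gaussian structure of $W$, working on the high-probability event $\mathcal{E} = \{\norm{W} \leq C_0\sqrt{n}\}$ from \eqref{eqn:W_opnorm_upper_bound}. First I would fix the free parameters: take $\epsilon$ to be a small absolute constant (say $\epsilon^2 = c_0$ with $c_0$ chosen so that the verification of the hypotheses below goes through), and take $\delta = 2\sqrt{2}(6\epsilon + \sigma\norm{W}/n)$, which on $\mathcal{E}$ is of order $\epsilon$ provided $\sigma \lesssim \sqrt{n}$; this is where the constant $C_1$ (resp.\ $C_2$) enters — it guarantees $\sigma/\sqrt{n}$ is small enough that $\delta$, $\epsilon$, and $3\sigma\norm{W}/n$ are all small, so that the threshold coefficient $1 - \epsilon - 4\delta - 3\sigma\norm{W}/n$ appearing in Lemma \ref{lem:deterministic} is bounded below by some absolute constant close to $1$ (say $\geq 1 - \eta$ for small $\eta$). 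Before invoking Lemma \ref{lem:deterministic} I must check its standing hypotheses $\ell_1(\hat z^\MLE, z^*) \leq \epsilon^2$ and $\ell_m(\hat V^{\BM,m}, z^*) \leq \epsilon^2$: by Lemma \ref{lem:MLE_SDP_Loose_bound} both are $\leq 8\sigma\norm{W}/n \leq 8C_0\sigma/\sqrt{n}$ on $\mathcal{E}$, which is $\leq \epsilon^2$ once $n/\sigma^2 \geq C_1$ with $C_1$ large enough. I would also pick the grid spacing, e.g.\ $h = \delta\sqrt{n}$ (or slightly smaller), so that the indicator $\indic{h > \delta\sqrt{n}}$ vanishes and the second term in Lemma \ref{lem:deterministic} disappears entirely; then the number of grid points $\lceil n\epsilon/h\rceil + 1 \asymp \sqrt{n}\,\epsilon/\delta \asymp \sqrt{n}$ is polynomial in $n$.

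Next I would control each summand $\frac{1}{n}\sum_{j\in[n]} \indic{\sigma|W_{j\cdot} z_{s_k}^{(-j)}| > (1-\eta)n - h}$. The crucial point, already built into the construction, is that for each fixed $k$ and each $j$, the vector $z_{s_k}^{(-j)}$ is independent of the $j$th row $W_{j\cdot}$ (it is the fixed point of $G^{(-j)}$, which uses $W^{(-j)}$). Conditioning on $W^{(-j)}$, the scalar $W_{j\cdot} z_{s_k}^{(-j)}$ is a (complex) Gaussian with variance $\norm{z_{s_k}^{(-j)}}^2 \leq n$ (since $z_{s_k}^{(-j)} \in \mathcleq^n$), so for each $j$,
\begin{align*}
\p\!\left(\sigma|W_{j\cdot} z_{s_k}^{(-j)}| > (1-\eta)n - h \,\big|\, W^{(-j)}\right) \leq \exp\!\left(-\frac{((1-\eta)n-h)^2}{2\sigma^2 n}\right) \leq \exp\!\left(-\frac{n}{4\sigma^2}\right),
\end{align*}
the last step using that $(1-\eta)n - h = (1-\eta - \delta/\sqrt{n})n$ with the parenthesis $\geq 1/\sqrt{2}$ once $\eta$ and $\delta/\sqrt{n}$ are small enough (this is exactly where the factor $1/8$ in the exponent of \eqref{eqn:thm4_1} comes from — I would be a bit generous to absorb the constant $72$ and the $\asymp\sqrt{n}$ grid factor, since $\sqrt{n}\cdot\exp(-n/(4\sigma^2)) \leq \exp(-n/(8\sigma^2))$ whenever $n/\sigma^2$ is large enough, absorbed into $C_1$). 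Taking expectations term by term via the tower rule and summing over $j$ and over the $O(\sqrt{n})$ values of $k$ yields $\E[\,\ell_m(\hat V^{\BM,m}, \hat z^\MLE)\indic{\mathcal{E}}\,] \leq \exp(-n/(8\sigma^2))$; adding back the complement of $\mathcal{E}$, on which $\ell_m \leq 4$ (or rather $\ell_m(\hat V^{\BM,m},\hat z^\MLE) \leq 2\cdot 2$ crudely, or just use the $\leq 4$ bound on the Frobenius loss and translate, or note $\ell_m$ is bounded), contributes at most $4\p(\mathcal{E}^c) \leq 4n^{-10}$; rescaling constants in $C_1$ gives the clean bound $\E\,\ell_m(\hat V^{\BM,m}, \hat z^\MLE) \leq \exp(-n/(8\sigma^2)) + n^{-10}$, which is part 1.

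For part 2, the regime $n/\sigma^2 \geq \max\{C_2, 9\log n\}$, I would instead aim to show the right-hand side of Lemma \ref{lem:deterministic} is \emph{exactly} zero with high probability. With $h = \delta\sqrt{n}$ the second term is gone, so it suffices to show that for every $k \in \{0,\dots,\lceil n\epsilon/h\rceil\}$ and every $j \in [n]$, the event $\{\sigma|W_{j\cdot} z_{s_k}^{(-j)}| > (1-\eta)n - h\}$ fails. By the same conditional-Gaussian tail bound, each such event has probability $\leq \exp(-n/(4\sigma^2)) \leq \exp(-(9/4)\log n) = n^{-9/4}$; a union bound over the $\leq n$ values of $j$ and $O(\sqrt{n})$ values of $k$ — hence over $O(n^{3/2})$ events — gives total failure probability $\lesssim n^{3/2}\cdot n^{-9/4} = n^{-3/4}$, which I would sharpen to $\leq n^{-1}/2$ by using a slightly larger constant in the exponent (replacing $n/(4\sigma^2) \geq (9/4)\log n$ with, say, $\geq 3\log n$, valid after enlarging $C_2$ and the constant in the $9\log n$ threshold — or simply by noting the hypothesis $n/\sigma^2 \geq 9\log n$ already gives margin and adjusting the arithmetic). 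Intersecting with $\mathcal{E}$ (probability $\geq 1 - n^{-10}$) yields $\ell_m(\hat V^{\BM,m}, \hat z^\MLE) = 0$ with probability $\geq 1 - n^{-1}$, which is part 2.

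\textbf{Main obstacle.} The genuinely delicate point is not the Gaussian tail computation but making sure the independence is real: one must use that $z_{s_k}^{(-j)}$ — the fixed point of $G^{(-j)}(\cdot, s_k, 2\delta n)$ — is a deterministic function of $W^{(-j)}$ alone, which requires $2\delta n \geq 2\sigma\norm{W}$ so that Lemma \ref{lem:G_j_properties}/Lemma \ref{lem:G_G_j_fixed_points} actually produce a well-defined unique fixed point; on $\mathcal{E}$ this reads $\delta n \geq C_0\sigma\sqrt{n}$, i.e.\ $\delta \geq C_0\sigma/\sqrt{n}$, which holds by our choice $\delta \geq 2\sqrt{2}\,\sigma\norm{W}/n \geq 2\sqrt{2}\,C_0^{-1}\cdot(C_0\sigma/\sqrt{n})\cdot$... — one must check the numerics line up, and more importantly that $s_k = n - kh \geq 0$ throughout (true since $kh \leq n\epsilon < n$) so that Lemma \ref{lem:G_G_j_fixed_points} applies with this $s_k$. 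A secondary bookkeeping hazard is that the grid count, the factor $72$, and the slack in the exponent must be juggled so the final constant is literally $1/8$ and not something messier; this is why I would deliberately throw away a constant factor in the exponent (bounding $(1-\eta-\delta/\sqrt n)^2 \geq 1/2$ rather than tracking it tightly) and absorb all polynomial-in-$n$ prefactors into the requirement $n/\sigma^2 \geq C_1$.
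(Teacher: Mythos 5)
Your overall strategy matches the paper's: truncate on the event $\{\norm{W}\leq C_0\sqrt{n}\}$, invoke Lemma~\ref{lem:deterministic} with parameters $\epsilon,\delta,h$, exploit the conditional independence of $W_{j\cdot}$ and $z_{s_k}^{(-j)}$ to get a Gaussian-type tail, and sum over the grid and the indices $j$. The minor differences (you keep $\epsilon$ a fixed small constant while the paper sets $\epsilon \asymp (\sigma/\sqrt{n})^{1/2}$; you take $h=\delta\sqrt{n}$ in Part~1 while the paper takes the exponentially small $h=n\exp(-n/(8\sigma^2))$; you use a union bound where the paper uses Markov in Part~2) are all workable, and in fact your choice of $h$ in Part~1 is slightly cleaner since it kills the second term of Lemma~\ref{lem:deterministic} outright.

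There is, however, a genuine gap, and it sits precisely at the point you flag as the ``genuinely delicate'' one. You set $\delta = 2\sqrt{2}\br{6\epsilon + \sigma\norm{W}/n}$ and $h=\delta\sqrt{n}$. Both are \emph{random}: they depend on $\norm{W}$, hence on the full matrix $W$ including its $j$th row. Consequently the grid points $s_k=n-kh$, and therefore the fixed points $z_{s_k}^{(-j)}$ of $G^{(-j)}(\cdot,s_k,2\delta n)$, are \emph{not} measurable with respect to $W^{(-j)}$ alone --- they also depend on $W_{j\cdot}$ through $\norm{W}$. This destroys exactly the independence you rely on when you declare that, conditionally on $W^{(-j)}$, $W_{j\cdot}z_{s_k}^{(-j)}$ is a complex Gaussian with variance $\norm{z_{s_k}^{(-j)}}^2$: with your $\delta$ and $h$, $z_{s_k}^{(-j)}$ is no longer a deterministic function of $W^{(-j)}$, so that conditional distribution is not Gaussian and the tail bound does not apply. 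The fix is what the paper actually does: pick a \emph{deterministic} $\delta$ (the paper uses $\delta = 49(C_0\sigma/\sqrt{n})^{1/2}$) and a \emph{deterministic} $h$, and verify only that the \emph{inequality} $\delta \geq 2\sqrt{2}(6\epsilon + \sigma\norm{W}/n)$ holds on $\{\norm{W}\leq C_0\sqrt{n}\}$, so that the deterministic bound of Lemma~\ref{lem:deterministic}, multiplied by $\indic{\norm{W}\leq C_0\sqrt{n}}$, holds pointwise with a grid that does not see $W_{j\cdot}$. A secondary issue: in Part~2 your arithmetic as stated gives $n^{3/2}\cdot n^{-9/4}=n^{-3/4}$, which is not $\leq n^{-1}$, and the fix you propose of ``enlarging the constant in the $9\log n$ threshold'' is not available since that constant is fixed in the theorem statement; you must instead tighten the tail exponent (the threshold coefficient can be driven close to $1$, giving an exponent of roughly $\exp(-n/(4\sigma^2))$ or better after using the true complex-Gaussian tail, which with $n/\sigma^2 \geq 9\log n$ gives $n^{3/2}\exp(-9\log n/4)\cdot(\text{const})$, but one needs to be more careful than $(1-\eta)^2/2\geq 1/4$ to close the gap; the paper achieves $\exp(-9n/(32\sigma^2))$, which suffices since $3/2 - 81/32 < -1$).
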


In (\ref{eqn:thm4_1}), the term $n^{-10}$ comes from (\ref{eqn:W_opnorm_upper_bound}). Since in (\ref{eqn:W_opnorm_upper_bound}), by increasing $C_0$, we can replace $n^{-10}$ by $n^{-c}$ where $c>0$ can be sufficiently large, the $n^{-10}$ term in (\ref{eqn:thm4_1}) can be consequently replaced by the much smaller $n^{-c}$. Consequently, we view $n^{-10}$ as a negligible term in (\ref{eqn:thm4_1}) compared to its first term. Despite the upper bound in (\ref{eqn:thm4_1}) only holds for $\sigma$ such that $\frac{n}{\sigma^2}\geq C_1$, it can be restated so that it holds for all $\sigma>0$.
This is because if $\frac{n}{\sigma^2}< C_1$, due to the fact that $\E  \ell_m(\hat V^{\BM,m},\hat z^\MLE) $ is at most 1, it can be upper bounded by $\ebr{\frac{C_1}{8}}  \ebr{-\frac{n}{8\sigma^2}} $. Hence, there exists some constant $C_3>0$ such that $\E \ell_m(\hat V^{\BM,m},\hat z^\MLE) \leq C_3\ebr{-\frac{n}{8\sigma^2}} + n^{-10}$ for all $\sigma>0$. Once we establish it, by the connection between $ \ell_m(\hat V^{\BM,m},\hat z^\MLE)$ and $n^{-2}\|\hat Z^{\BM,m} - \hat z^\MLE (\hat z^\MLE)^\H\|_{\rm F}^2$ in (\ref{eqn:loss_connection}), we immediately establish (\ref{eqn:intro2_1}), the first part of  Theorem \ref{thm:intro2}.
With Markov inequality, the in-expectation upper bound (\ref{eqn:thm4_1}) can be converted into a in-probability upper bounded, resulting in the tightness result, the second part of Theorem \ref{thm:upper_bound_Gaussian}, which is also the second part of Theorem \ref{thm:intro2}.

To conclude this section, we reflect on the connections and distinctions between the analysis presented in this paper and our prior work \cite{gao2021exact, gao2022sdp}. These earlier studies focus on quantifying the deviations of the MLE and the SDP solutions from the ground truth $z^*$, leveraging the properties of fixed points. For instance, \cite{gao2022sdp} measures the distance between the SDP and $z^*$ through $\ell_n(\hat V^\SDP, z^*)$, and establishes that this distance is bounded by a certain quantity of $z^*$, similar to the results of Corollary \ref{cor:fixed_points_MLE}.
In contrast, the current work expands these frameworks to analyze $\ell_m(\hat V^{\BM,m},\hat z^\MLE)$. First, our analysis leverages fixed points of both mappings $F_m$ and $F_1$, instead of using a single mapping as in previous studies, to derive Corollary \ref{cor:fixed_points_MLE}. Second and more importantly, the current analysis confronts the challenge of handling $\frac{8}{n} \sum_{j \in [n]} \indic{\abs{[Y \hat{z}^\MLE]_j} < \delta n}$ within the upper bound  in the high-noise regime where $\sigma \gtrsim \sqrt{n/\log n}$, unlike in our earlier work where we only need to control a less complicated quantity of $z^*$. To tackle this, we have dedicated an entire Section \ref{sec:3} to this issue, introducing a novel approach that involves replacing $F_1$ with a Lipschitz mapping. This marks a significant methodological advancement over the more straightforward analysis conducted in our previous work.

The techniques developed in this paper may potentially extend to other low-rank optimization problems. One natural direction is orthogonal group synchronization, which generalizes phase synchronization by replacing phases with \(d\)-dimensional orthogonal matrices as latent variables. In this setting, analogues of the MLE, SDP, and BM factorization can be defined, along with corresponding fixed-point mappings \(F_1\) and \(F_m\). This structural similarity suggests that our analysis could, in principle, be adapted to this matrix-valued setting. Beyond synchronization problems, the core ideas might be also applicable to problems such as community detection in stochastic block models or \(k\)-means clustering. Exploring these directions remains an interesting topic for future research.

\section{Discussions}
This paper investigates  the distance between the MLE and the BM factorization in the phase synchronization, specifically focusing on the high-noise regime where $\sigma \gtrsim \sqrt{n/\log n}$. While this study contributes to our understanding of this specific problem, the BM factorization presents a wide array of interesting and important open questions, many of which are beyond the scope of our current analysis. Others present challenges that our current analytical framework is not equipped to solve. In the following discussions, we explore some of these unresolved issues, highlighting the challenges they present and suggesting potential avenues for future research.



\subsection{Computation and Optimization Landscape}\label{sec:computation}
This paper focuses on analyzing \(\hat{Z}^{\BM,m}\), the global solution to the BM factorization (\ref{eqn:BM_m_def}). A natural question is whether this solution can be computed efficiently in practice, especially given that the problem is non-convex. Interestingly, despite this non-convexity, the optimization landscape can sometimes be well-behaved. For example, it is known \cite{burer2005local, boumal2016non} that when \(m\) is large enough (\(m > \sqrt{2n}\)), all second-order critical points coincide with global ones. More recent works \cite{endor2024benign, ling2023local, waldspurger2020rank, mcrae2024benign, mcrae2024nonconvex} have shown that similar guarantees can still hold for smaller values of \(m\) when the noise level \(\sigma\) is not too large (\(\sigma \lesssim \sqrt{n/\log n}\)). In such cases, simple local algorithms like starting from the leading eigenvectors and applying the mapping \(F_m\) iteratively can reliably find the global solution \(\hat{Z}^{\BM,m}\).

However, when the problem becomes noisier ($\sigma \gtrsim \sqrt{{n}/{(\log n)}}$) and \(m\) is small, the landscape may no longer be so favorable.  In such scenarios, \cite{mei2017solving}  establishes upper bounds on the discrepancies between the global  and local maximum.  While our study is centered on $\hat{Z}^{\BM,m}$, the theoretical framework can be potentially extended to any fixed points of $F_m$ under certain conditions, as evidenced by the bounds developed in Theorem \ref{thm:fixed_points}. Nevertheless, the framework's applicability is limited and may not be further extended to second-order critical points due to its reliance on the specific properties of fixed points associated with $F_m$.

\subsection{Role of \( m \)}

Our main results hold for any \( m \geq 2 \), and interestingly, the bound we establish does not depend on \( m \). At first glance, this might seem surprising. In the BM factorization formulation (\ref{eqn:BM_m_def}), the size of the feasible set increases with \( m \). In fact, when \( m = 1 \), this set matches exactly with that of the MLE in (\ref{eqn:MLE_def_2}). Because the feasible set gets larger as \( m \) increases, one might intuitively expect the BM factorization solution \( \hat{Z}^{\BM,m} \) to deviate further from the MLE solution \( \hat{z}^{\MLE}(\hat{z}^{\MLE})^\H \). However, our theoretical bound in (\ref{eqn:intro2_1}) does not capture this potential dependence on \( m \).
This lack of dependence is not due to a fundamental property of the problem, but rather an artifact of our analysis. Our approach rewrites \( \hat{Z}^{\BM,m} \) as \( (\hat{V}^{\BM,m})^\H \hat{V}^{\BM,m} \), and relies on the fact that both \( \hat{V}^{\BM,m} \) and the MLE are fixed points of the mappings, \( F_m \) and \( F_1 \). We then show that these mappings behave like contractions, which allows us to bound the distance between their fixed points. However, in this analysis, the parameter \( m \) does not directly influence the contraction-type properties (see Lemmas \ref{lem:lm_contraction} and \ref{lem:normalization}), and therefore does not appear in Theorem \ref{thm:fixed_points} and subsequent analysis.

Incorporating \( m \) in the upper bound would require a more refined analysis that is beyond our current analysis framework. 
A possible direction for understanding the role of \(m\) is to build on recent advances in the analysis of optimization landscapes for synchronization problems \cite{ling2023local, mcrae2024nonconvex, endor2024benign, mcrae2025benign}. These works examine associated Laplacian matrices to derive conditions on $m$ under which the landscape is benign. Borrowing ideas from this line of work could provide valuable insights into how the choice of \(m\) influences the behavior of the BM factorization.

\subsection{Tightness of the SDP}

This paper does not introduce new theoretical bounds for the tightness of the SDP. Instead, we confirm that the condition \(\sigma \lesssim \sqrt{n/\log n}\) remains sufficient for ensuring tightness, consistent with earlier results by \cite{zhong2018near}. The broader question—under what conditions a non-convex problem can be exactly solved via a convex relaxation—is an important topic in optimization and continues to motivate a great deal of research.
In addition to phase synchronization, similar sufficient conditions for SDP tightness have been established in related problems such as orthogonal group synchronization \cite{won2022orthogonal, ling2020solving} and the generalized orthogonal Procrustes problem \cite{LING202362}. However, what remains less understood is whether such conditions are also necessary.  Addressing this question would be of substantial interest.

In the context of phase synchronization, \cite{bandeira2017tightness} provides some empirical insights. Specifically, its Figure 2 suggests that the SDP remains tight when \(\sigma \leq \sqrt{n}/3\). However, the accompanying text in \cite{bandeira2017tightness} interprets these results to suggest that  $\sigma$ might be allowed to grow at a rate of $\sqrt{n}/\text{polylog}(n)$  to maintain the tightness, where $\text{polylog}(n)$ means some polynomial in $\log n$. This interpretation is echoed in the follow-up work  \cite{zhong2018near}. There is a subtle discrepancy between the visual and textual implications drawn from these studies.

While our result in (\ref{eqn:intro2_1}) holds for the high-noise regime $\sigma \gtrsim \sqrt{n/\log n}$, it provides only an upper bound that may not be sharp. If it turns out to be sharp, it would imply that \(\sigma \lesssim \sqrt{n/\log n}\) is both a necessary and sufficient condition for tightness. To rigorously address this question, it would be important to prove that the SDP fails to be tight when \(\sigma\) exceeds a certain threshold, or to establish a lower bound on the discrepancy \(n^{-2}\|\hat{Z}^{\SDP} - \hat{z}^{\MLE}(\hat{z}^{\MLE})^\H\|_{\rm F}^2\). These remain open problems and are worthy of further investigation.

\subsection{$\sqrt{n}$ Regime}
Our results require that $\frac{n}{\sigma^2}$ must exceed a certain threshold, as reflected in Theorem \ref{thm:upper_bound_Gaussian}. Nonetheless, exploring the regime where $\sigma$ of the order $\sqrt{n}$ is also interesting and important.
 In this regime, \cite{javanmard2016phase} studies the asymptotic performance of the SDP in  synchronization problems. It reveals that the SDP is able to achieve a near-optimal performance for Bayesian estimation of $z^*$, using the cavity method from spin-glass theory. Our current analytical framework does not extend to this regime because we require $\frac{n}{\sigma^2}$ to be sufficiently large to establish the necessary contraction-type results in Section \ref{sec:2}.  To thoroughly understand the performance of the BM factorization in the $\sqrt{n}$ regime, further development of the methodologies presented in \cite{javanmard2016phase} may be required. 
 

\subsection{Eigenvector Method for Phase Synchronization}
While our study focuses on the SDP and the BM factorization for the phase synchronization, alternative methods such as the eigenvector approach \cite{singer2011angular} also merit consideration. The eigenvector method, which involves computing the leading eigenvector of the data matrix \(Y\) followed by entrywise normalization to ensure unit coordinates, is notable for its computational simplicity. In terms of performance, \cite{zhang2024exact} demonstrates that the eigenvector method achieves the estimation rate of \(\frac{\sigma^2}{n}\) for \(z^*\), which is comparable to that of the MLE and the SDP. 
However, evaluating how closely the eigenvector method approximates the MLE within our current analytical framework presents challenges. Unlike the BM factorization, the eigenvector method is not naturally compatible with our fixed-point analysis, which is central to our theory. As a result, it would require distinct analytical techniques that are outside the scope of this paper.

\section{Proofs}\label{sec:4} In this section, we give proofs of main results: Theorem \ref{thm:fixed_points}, 
Lemma \ref{lem:MLE_extreme_points_upper_bound}, Proposition \ref{prop:MLE_extreme_points_upper_bound_grid}, 
Proposition \ref{prop:extreme_points_leave_one_out}, and Theorem \ref{thm:upper_bound_Gaussian}. Due to the page limit, we include proofs of the remaining lemmas in the supplementary material.

\begin{proof}[Proof of Theorem \ref{thm:fixed_points}]
By the definition of $\ell_m$ in (\ref{eqn:lm_def}), there exists $a\in\mathc^m$ such that $\norm{a}=1$ and $\ell_m(V Y^\H, Yz) = n^{-1}\fnorm{V Y^\H - a(Yz)^\H}^2$.
From Lemma \ref{lem:lm_contraction}, we have
\begin{align}\label{eqn:fixed_points_proof_1}
\fnorm{V Y^\H - a(Yz)^\H}^2 = n\ell_m(V Y^\H, Yz) \leq n^3\br{6\epsilon + \frac{\sigma\norm{W}}{n}}^2 \ell_m(V,z).
\end{align}
On the other hand, 
we have
\begin{align}\label{eqn:fixed_points_proof_2}
\ell_m(F_m(V), F_1(z)) \leq n^{-1} \fnorm{F_m(V) - a\br{F_1(z)}^\H}^2 = n^{-1} \fnorm{F_m(V) - F_m(az^\H)}^2 ,
\end{align}
where the last equality is due to the fact that $F_m(az^\H) = a (F_1(z))^\H$.

Consider any $t>0$. For any $j\in[n]$, recall $V_j$, $[VY^\H]_j$, and $[F_m(V)]_{j}$ are the $j$th columns of $V$, $VY^\H$, and $F_m(V)$, respectively. Note that $[F_m(V)]_{j}$ and $[ F_m(az^\H)]_j$ can be written as
\begin{align*}
&[F_m(V)]_{j} = \frac{[VY^\H]_j}{\norm{[VY^\H]_j}} \indic{[VY^\H]_j \neq 0} + V_j \indic{[VY^\H]_j = 0}\\
\text{and }&[ F_m(az^\H)]_j = \frac{[(az^\H) Y^\H]_j}{\norm{[(az^\H) Y^\H]_j}} \indic{[(az^\H) Y^\H]_j \neq 0} +  a \bar{z_j} \indic{[(az^\H) Y^\H]_j  =0}.
\end{align*}
By applying (\ref{eqn:lem_normalization_2}) of Lemma \ref{lem:normalization}, we have 
\begin{align*}
\norm{[F_m(V)]_{j} - [ F_m(az^\H)]_j} &\leq   \frac{2\norm{[V Y^\H]_j - [a (Yz)^\H]_j}}{t} + 2\indic{\norm{[(az^\H)Y^\H]_j}<t} \\
&=  \frac{2\norm{[V Y^\H]_j - [a (Yz)^\H]_j}}{t} + 2\indic{ \abs{[Yz]_j} <t},
\end{align*}
where the last equality is due to $\norm{[(az^\H)Y^\H]_j} = \abs{[(Yz)^\H]_j} = \abs{[Yz]_j}$ as $\norm{a}=1$.

Summing over all $j\in[n]$, we have 
\begin{align*}
\fnorm{F_m(V) - F_m(az^\H)}^2 & = \sum_{j\in[n]}\norm{[F_m(V)]_{j} - [ F_m(az^\H)]_j}^2\\
&\leq \sum_{j\in[n]} \br{ \frac{2\norm{[V Y^\H]_j - [a (Yz)^\H]_j}}{t} + 2\indic{ \abs{[Yz]_j} <t}}^2 \\
&\leq  \sum_{j\in[n]} \br{\frac{4\norm{[V Y^\H]_j - [a (Yz)^\H]_j}^2}{t^2} + 4\indic{ \abs{[Yz]_j} <t}}\\
& = 4t^{-2} \fnorm{VY^\H - a (Yz)^\H}^2 + 4  \sum_{j\in[n]}\indic{ \abs{[Yz]_j} <t}.
\end{align*}
Together with (\ref{eqn:fixed_points_proof_1}) and (\ref{eqn:fixed_points_proof_2}), we have
\begin{align*}
\ell_m(F_m(V), F_1(z))&\leq n^{-1}\br{ 4t^{-2} \fnorm{VY^\H - a (Yz)^\H}^2 + 4  \sum_{j\in[n]}\indic{ \abs{[Yz]_j} <t}}\\
&\leq n^{-1}\br{ 4t^{-2}  n^3\br{6\epsilon + \frac{\sigma\norm{W}}{n}}^2 \ell_m(V,z) + 4  \sum_{j\in[n]}\indic{ \abs{[Yz]_j} <t}}\\
&\leq \frac{4n^2}{t^2}\br{6\epsilon + \frac{\sigma\norm{W}}{n}}^2 \ell_m(V,z) + \frac{4}{n}  \sum_{j\in[n]}\indic{ \abs{[Yz]_j} <t},
\end{align*}
which proves (\ref{eqn:main_sec_3}).

To prove (\ref{eqn:main_sec_4}), set $t=\delta n$. Since $z = F_1(z)$ and $V = F_m(V)$, we have $\ell_m(V,z) = \ell_m(F_m(V), F_1(z))$, and  the above display can be written as
\begin{align*}
\br{1-\frac{4}{\delta^2}\br{6\epsilon + \frac{\sigma\norm{W}}{n}}^2} \ell_m(V,z)\leq \frac{4}{n}  \sum_{j\in[n]}\indic{ \abs{[Yz]_j} <\delta n}.
\end{align*}
Since $\frac{4}{\delta^2}\br{6\epsilon + \frac{\sigma\norm{W}}{n}}^2 \leq 1/2$, we have 
$ \ell_m(V,z)\leq \frac{8}{n}  \sum_{j\in[n]}\indic{ \abs{[Yz]_j} <\delta n}.$
\end{proof}

\begin{proof}[Proof of Lemma \ref{lem:MLE_extreme_points_upper_bound}]
For any $t\geq 4\sigma\norm{W}$, let $z\in\mathcleq^n$ be the fixed point of $G(\cdot,\hat s,t)$. By Lemma \ref{lem:MLE_G_fixed_points}, we have
$\norm{\hat z^\MLE - z}^2 \leq 32\sum_{j\in[n]}  \indic{\abs{ z^*_j \hat s + \sigma [Wz]_j}<t}.$
For any $j\in[n]$, note that
\begin{align*}
&[Y\hat z^\MLE]_j = [(z^*(z^*)^\H + \sigma W) \hat z^\MLE]_j = z_j^* \hat s +  \sigma [W \hat z^\MLE]_j.
\end{align*}
and $[z^*\hat s + \sigma W z]_j = z^*_j \hat s + \sigma [W z]_j$. Then $[Y\hat z^\MLE]_j - [z^*\hat s + \sigma W z]_j = \sigma [W (\hat z^\MLE - z)]_j$. Hence,
\begin{align*}
\sum_{j\in[n]}\abs{[Y\hat z^\MLE]_j - [z^*\hat s + \sigma W z]_j }^2&= \sum_{j\in[n]} \abs{ \sigma [W (\hat z^\MLE - z)]_j}^2 = \sigma^2 \norm{W (\hat z^\MLE - z)}^2 \\
& \leq \sigma^2 \norm{W}^2 \norm{\hat z^\MLE -z}^2.
\end{align*}
Then, for any $r>0$, we have
\begin{align*}
\sum_{j\in[n]} \indic{\abs{[Y\hat z^\MLE]_j - [z^*\hat s + \sigma W z]_j } >  r }&\leq  r ^{-2}\sum_{j\in[n]}\abs{[Y\hat z^\MLE]_j - [z^*\hat s + \sigma W z]_j }^2 \\
&\leq   r ^{-2}\sigma^2 \norm{W}^2 \norm{\hat z^\MLE -z}^2.
\end{align*}
Note that for any $a,b\in\mathr$, we have
\begin{align*}
\indic{\abs{a}<  r } &= \indic{\abs{a-b+b}<  r } \leq \indic{\abs{b}-\abs{a-b} <  r } \\
&=\indic{\abs{b}-\abs{a-b} <  r ,\; \abs{a-b}> r } + \indic{\abs{b}-\abs{a-b} <  r ,\; \abs{a-b}\leq  r }\\
&\leq \indic{ \abs{a-b}> r } +\indic{\abs{b}< 2r}.\numberthis\label{eqn:indicator_split}
\end{align*}
Hence, for each $j\in[n]$,
\begin{align*}
\indic{\abs{[Y\hat z^\MLE]_j}< r } &\leq  \indic{\abs{[Y\hat z^\MLE]_j - [z^*\hat s + \sigma W z]_j } >  r } + \indic{\abs{[z^*\hat s + \sigma W z]_j} < 2r}.
\end{align*}
Summing over all $j\in[n]$, we have
\begin{align*}
\sum_{j\in[n]} \indic{\abs{[Y\hat z^\MLE]_j}< r }  &\leq  \sum_{j\in[n]} \indic{\abs{[Y\hat z^\MLE]_j - [z^*\hat s + \sigma W z]_j } >  r } +\sum_{j\in[n]}  \indic{\abs{[z^*\hat s + \sigma W z]_j} < 2r}\\
&\leq  r ^{-2}\sigma^2 \norm{W}^2 \norm{\hat z^\MLE -z}^2 + \sum_{j\in[n]}  \indic{\abs{[z^*\hat s + \sigma W z]_j} < 2r}\\
&=  r ^{-2}\sigma^2 \norm{W}^2 \norm{\hat z^\MLE -z}^2 + \sum_{j\in[n]}  \indic{\abs{z_j^*\hat s + \sigma[ W z]_j} < 2r}.
\end{align*}

As a consequence,
\begin{align*}
\sum_{j\in[n]} \indic{\abs{[Y\hat z^\MLE]_j}<r}  &\leq 32r^{-2}\sigma^2 \norm{W}^2  \sum_{j\in[n]}  \indic{\abs{ z^*_j \hat s + \sigma [Wz]_j}<t} +  \sum_{j\in[n]}  \indic{\abs{z_j^*\hat s + \sigma[ W z]_j} < 2r}.
\end{align*}
Consider any $\delta \geq  \frac{2\sigma \norm{W}}{n}$. Set $t=2\delta n$ and $r=\delta n$, we have 
\begin{align*}
\sum_{j\in[n]} \indic{\abs{[Y\hat z^\MLE]_j}<\delta n}   & \leq  \frac{32\sigma^2\norm{W}^2}{\delta^2 n^2}   \sum_{j\in[n]}  \indic{\abs{ z^*_j \hat s + \sigma [Wz]_j}<2\delta n} +  \sum_{j\in[n]}  \indic{\abs{z_j^*\hat s + \sigma[ W z]_j} < 2 \delta n}\\
&\leq  9 \sum_{j\in[n]}  \indic{\abs{z_j^*\hat s + \sigma[ W z]_j} < 2 \delta n}.
\end{align*}
Multiplying $n^{-1}$ on both sides, we complete the proof.
\end{proof}

\begin{proof}[Proof of Proposition \ref{prop:MLE_extreme_points_upper_bound_grid}]
Let $a\in\mathc_1$ such that $ \norm{\hat z^\MLE - az^*}^2 = n\ell_1(\hat z^\MLE,z^*) \leq n\epsilon^2$. Then
\begin{align*}
\abs{\hat s} &=\abs{(z^*)^\H \hat z^\MLE} = \abs{(z^*)^\H (\hat z^\MLE-az^*) + (z^*)^\H(az^*)} \geq \abs{(z^*)^\H(az^*)} - \abs{(z^*)^\H (\hat z^\MLE-az^*)}\\
&\geq  n - \sqrt{n}\norm{\hat z^\MLE-az^*} \geq (1-\epsilon)n.
\end{align*}
Then $\abs{\hat s}\in [(1-\epsilon)n,n]\subset[s_{\lceil n\epsilon/h\rceil},s_0]$. Define $\hat k = \argmin_{0\leq k \leq \lceil n\epsilon/h\rceil} \abs{\abs{\hat s}  - s_{k}}$. Then $\abs{\abs{\hat s} - s_{\hat k}} \leq h$.

Consider any $\delta\geq \frac{2\sigma \norm{W}}{n}$ and let $z\in\mathcleq^n$ be the fixed point of $G(z,|\hat s|,2\delta n)$.  By the 4th property of Lemma \ref{lem:G_propoerties}, we have
\begin{align*}
 \norm{\br{z^* \abs{\hat s} + \sigma Wz} - \br{z^* s_{\hat k} + \sigma W z_{s_{\hat k}}}  }^2 &\leq  4n\abs{\abs{\hat s} - s_{\hat k}}^2\leq 4nh^2.
\end{align*}
Note that we have the following fact: $\sum_{j\in[n]}\indic{\abs{x_j}>t}\leq t^{-2}\norm{x}^2\indic{\norm{x}>t}$ for any $x\in\mathr^n$ and any $t>0$. This is because if $\norm{x}\leq t$, then $\sum_{j\in[n]}\indic{\abs{x_j}>t} =0$; if $\norm{x}>t$, then $\sum_{j\in[n]}\indic{\abs{x_j}>t} \leq \sum_{j\in[n]}t^{-2}|x_j|^2\indic{\abs{x_j}>t} \leq \sum_{j\in[n]}t^{-2}|x_j|^2 = t^{-2}\norm{x}^2$. Hence,
\begin{align*}
&\sum_{j\in[n]} \indic{ \abs{\sbr{z^* \abs{\hat s} + \sigma Wz}_j - \sbr{z^* s_{\hat k} + \sigma W z_{s_{\hat k}}}_j} > 2\delta n} \\
&\leq  (2\delta n)^{-2} \norm{\br{z^* \abs{\hat s} + \sigma Wz} - \br{z^* s_{\hat k} + \sigma W z_{s_{\hat k}}}  }^2 \indic{ \norm{\br{z^* \abs{\hat s} + \sigma Wz} - \br{z^* s_{\hat k} + \sigma W z_{s_{\hat k}}}  } >2\delta n}\\
&\leq (2\delta n)^{-2} \br{4nh^2} \indic{\sqrt{4nh^2}>2\delta n}= \frac{h^2}{\delta^2n} \indic{h>\delta\sqrt{n}}.
\end{align*}

Using (\ref{eqn:indicator_split}) and the above display, we have 
\begin{align*}
&\frac{1}{n}\sum_{j\in[n]}  \indic{\abs{z_j^*|\hat s| + \sigma \left[ W z \right]_j} < 2 \delta n} \\
&= \frac{1}{n}\sum_{j\in[n]}  \indic{\abs{z_j^*s_{\hat k} + \sigma \left[ W z_{s_{\hat k}} \right]_j} < 4 \delta n}  + \frac{1}{n} \sum_{j\in[n]} \indic{ \abs{\sbr{z^* \abs{\hat s} + \sigma Wz}_j - \sbr{z^* s_{\hat k} + \sigma W z_{s_{\hat k}}}_j} > 2\delta n}\\
&\leq \frac{1}{n}\sum_{j\in[n]}  \indic{\abs{z_j^*s_{\hat k} + \sigma \left[ W z_{s_{\hat k}} \right]_j} < 4 \delta n}  + \frac{h^2}{\delta^2 n^2}\indic{h>\delta\sqrt{n}}\\
&\leq  \sum_{0\leq k\leq \lceil n\epsilon/h\rceil} \br{\frac{1}{n}\sum_{j\in[n]}  \indic{\abs{z_j^*s_k + \sigma \left[ W z_{s_k} \right]_j} < 4 \delta n}}  +  \frac{h^2}{\delta^2 n^2}\indic{h>\delta\sqrt{n}},
\end{align*}
where the last inequality is due to $\hat k\in\{0,1,2,\ldots, \lceil n\epsilon/h\rceil \}$. By applying Lemma \ref{lem:MLE_extreme_points_upper_bound_2}, we have
\begin{align*}
&\frac{1}{n}\sum_{j\in[n]}\indic{\abs{[Y\hat z^\MLE]_j}<\delta n} \\
 &\leq   \frac{9}{n}\sum_{j\in[n]}  \indic{\abs{z_j^*|\hat s| + \sigma \left[ W z \right]_j} < 4 \delta n}\\
& \leq 9 \br{\sum_{0\leq k\leq \lceil n\epsilon/h\rceil} \br{ \frac{1}{n}\sum_{j\in[n]}  \indic{\abs{z_j^*s_k + \sigma \left[ W z_{s_k} \right]_j} < 4 \delta n} } +  \frac{h^2}{\delta^2 n^2}\indic{h>\delta\sqrt{n}}}\\
&\leq  9 \sum_{0\leq k\leq \lceil n\epsilon/h\rceil} \br{ \frac{1}{n}\sum_{j\in[n]}  \indic{\abs{z_j^*s_k + \sigma \left[ W z_{s_k} \right]_j} < 4 \delta n} } +\frac{9h^2}{\delta^2 n^2}\indic{h>\delta\sqrt{n}}.
\end{align*}
\end{proof}

\begin{proof}[Proof of Proposition \ref{prop:extreme_points_leave_one_out}]
For any $j\in[n]$, we have
\begin{align*}
 \abs{[W z ]_j} & = \abs{[ W z^{( -j )}]_j + [ W (  z  - z^{( -j )})]_j}=\abs{W_{j\cdot} z^{( -j )} + [ W (  z  - z^{( -j )})]_j}\\
 &\leq \abs{W_{j\cdot} z^{( -j )}} +   \abs{[ W (  z  - z^{( -j )})]_j}\leq \abs{W_{j\cdot} z^{( -j )}} +  \norm{W (  z  - z^{( -j )})}\\
 &\leq \abs{W_{j\cdot} z^{( -j )}}  +  \norm{W}\norm{(  z  - z^{( -j )})} \leq \abs{W_{j\cdot} z^{( -j )}}  + 3 \norm{W},
\end{align*}
where the last inequality is due to Lemma \ref{lem:leave_one_out_close}.
Hence, for any $r\in\mathr$, $\indic{\sigma \abs{[Wz]_j} \geq \abs{ s } - r} \leq \indic{\sigma \abs{W_{j\cdot} z^{( -j )}}  \geq \abs{ s } - r - 3\sigma \norm{W}}$. Summing over all $j\in[n]$, we have
\begin{align*}
\frac{1}{n} \sum_{j\in[n]}  \indic{\sigma \abs{[Wz]_j} \geq \abs{ s } - r} \leq  \frac{1}{n} \sum_{j\in[n]}\indic{\sigma \abs{W_{j\cdot} z^{( -j )}}  \geq \abs{ s } - r - 3\sigma \norm{W}}.
\end{align*}
\end{proof}

\begin{proof}[Proof of Theorem \ref{thm:upper_bound_Gaussian}]
Consider any $m\in\mathbb{N}\setminus\{1\}$. Recall the upper bound for $\norm{W}$ in (\ref{eqn:W_opnorm_upper_bound}).
We first prove (\ref{eqn:thm4_1}). Since $$ \ell_m(V,\hat z^\MLE) =  \ell_m(V,\hat z^\MLE) \indic{\norm{W}\leq C_0\sqrt{n}} + \ell_m(V,\hat z^\MLE) \indic{\norm{W}> C_0\sqrt{n}},$$ we have
\begin{align*}
\E \ell_m(V,\hat z^\MLE) &= \E \br{ \ell_m(V,\hat z^\MLE) \indic{\norm{W}\leq C_0\sqrt{n}} }+   \E \br{ \ell_m(V,\hat z^\MLE) \indic{\norm{W}> C_0\sqrt{n}}}  \\
&\leq  \E \br{\ell_m(V,\hat z^\MLE) \indic{\norm{W}\leq C_0\sqrt{n}}} + \E\indic{\norm{W}> C_0\sqrt{n}}  \\
&\leq   \E \br{\ell_m(V,\hat z^\MLE) \indic{\norm{W}\leq C_0\sqrt{n}}} + n^{-10}, \numberthis\label{eqn:upper_bound_Gaussian_proof_1}
\end{align*}
where the last inequality is due to (\ref{eqn:W_opnorm_upper_bound}),
we focus on analyzing $\E \br{ \ell_m(V,\hat z^\MLE) \indic{\norm{W}\leq C_0\sqrt{n}}}$.

%

Assume $\norm{W}\leq C_0\sqrt{n}$. We first perform some deterministic analysis on $ \ell_m(V,\hat z^\MLE)$.
From Lemma \ref{lem:MLE_SDP_Loose_bound}, we have $\ell_1(\hat z^\MLE,z^*),\ell_m(\hat V^{\BM,m},z^*)\leq 8\sigma \norm{W}/n \leq 8C_0\sigma/\sqrt{n}$. Set $\epsilon =\br{\frac{8C_0\sigma}{\sqrt{n}}}^\frac{1}{2}$ so that $\ell_1(\hat z^\MLE,z^*),\ell_m(\hat V^{\BM,m},z^*)\leq \epsilon^2$. Assume 
$\frac{C_0 \sigma}{\sqrt{n}}<\frac{1}{16}$. Then $\epsilon <1/2$ and
\begin{align*}
2\sqrt{2}\br{6\epsilon + \frac{\sigma\norm{W}}{n}} &= 2\sqrt{2}\br{6\br{\frac{8C_0\sigma}{\sqrt{n}}}^\frac{1}{2} + \frac{\sigma\norm{W}}{n}}  = 48 \br{\frac{C_0\sigma}{\sqrt{n}}}^\frac{1}{2}  + 2\sqrt{2} \frac{C_0\sigma}{\sqrt{n}}\leq 49 \br{\frac{C_0\sigma}{\sqrt{n}}}^\frac{1}{2}.
\end{align*}
Hence, by setting $\delta =49 \br{\frac{C_0\sigma}{\sqrt{n}}}^\frac{1}{2}$, we have $\delta\geq 2\sqrt{2}\br{6\epsilon + \frac{\sigma\norm{W}}{n}}$. In this way, the conditions required in  Lemma \ref{lem:deterministic} are satisfied, which leads to
\begin{align*}
 &\ell_m(\hat V^{\BM,m},\hat z^\MLE)\\
 &\leq  72 \sum_{0\leq k\leq \lceil n\epsilon/h\rceil} \br{ \frac{1}{n}\sum_{j\in[n]}  \indic{\sigma \abs{W_{j\cdot} z_{s_k}^{(-j)}} > \br{1-\epsilon-4\delta - \frac{3\sigma\norm{W}}{n}}n-h} }  +  \frac{72h^2}{\delta^2 n^2}\indic{h>\delta\sqrt{n}}\\
 &\leq 72\sum_{0\leq k\leq \lceil n\epsilon/h\rceil} \br{ \frac{1}{n}\sum_{j\in[n]}  \indic{\sigma \abs{W_{j\cdot} z_{s_k}^{(-j)}} > \br{1-\epsilon-4\delta - \frac{3C_0\sigma}{\sqrt{n}}}n-h} }  +  \frac{72h^2}{\delta^2 n^2}\indic{h>\delta\sqrt{n}}.
\end{align*}
Since the above inequality holds under the assumption $\norm{W}\leq C_0\sqrt{n}$, we can write it as
\begin{align*}
 &\ell_m(\hat V^{\BM,m},\hat z^\MLE)\indic{\norm{W}\leq C_0\sqrt{n}}\\
 &\leq  72 \sum_{0\leq k\leq \lceil n\epsilon/h\rceil} \br{ \frac{1}{n}\sum_{j\in[n]}  \indic{\sigma \abs{W_{j\cdot} z_{s_k}^{(-j)}} > \br{1-\epsilon-4\delta - \frac{3C_0\sigma}{\sqrt{n}}}n-h} }  +  \frac{72h^2}{\delta^2 n^2}\indic{h>\delta\sqrt{n}}. \numberthis\label{eqn:thm4_proof_3}
\end{align*}

Take expectation on both sides of (\ref{eqn:thm4_proof_3}). Then, we have
\begin{align*}
&\E \ell_m(\hat V^{\BM,m},\hat z^\MLE)\indic{\norm{W}\leq C_0\sqrt{n}}\\
 &\leq  72 \sum_{0\leq k\leq \lceil n\epsilon/h\rceil} \br{ \frac{1}{n}\sum_{j\in[n]}  \pbr{\sigma \abs{W_{j\cdot} z_{s_k}^{(-j)}} > \br{1-\epsilon-4\delta - \frac{3C_0\sigma}{\sqrt{n}}}n-h} }  +  \frac{72h^2}{\delta^2 n^2}\indic{h>\delta\sqrt{n}}\\
 &\leq  72 \sum_{0\leq k\leq \lceil n\epsilon/h\rceil} \br{ \frac{1}{n}\sum_{j\in[n]}  \pbr{\sigma \abs{W_{j\cdot} z_{s_k}^{(-j)}} > \br{1-\epsilon-4\delta - \frac{3C_0\sigma}{\sqrt{n}}}n-h} }  +  \frac{72h^2}{\delta^2 n^2}.
\end{align*}
For each $j$ and $k$, due to the independence between $W_{j\cdot}$ and $z_{s_k}^{(-j)}$, $W_{j\cdot}z_{s_k}^{(-j)}$ is complex Gaussian with zero mean and variance $\|{z_{s_k}^{(-j)}}\|^2\leq n$. Then $|{W_{j\cdot}z_{s_k}^{(-j)}}|$ is Gaussian with zero mean and variance $\|{z_{s_k}^{(-j)}}\|^2$. Let $1-\Phi(x)$ be cumulative distribution function of the standard normal. That is, $\Phi(x) = \int_{u\geq x}^\infty 1/\sqrt{2\pi} \ebr{-u^2/2}du$. Then
\begin{align*}
 \pbr{\sigma \abs{W_{j\cdot} z_{s_k}^{(-j)}} > \br{1-\epsilon-4\delta - \frac{3C_0\sigma}{\sqrt{n}}}n-h} &= \Phi\br{\frac{1}{\sigma \norm{z_{s_k}^{(-j)}}} \br{1-\epsilon-4\delta - \frac{3C_0\sigma}{\sqrt{n}} - \frac{h}{n}}n}\\
 &\leq \Phi\br{ \br{1-\epsilon-4\delta - \frac{3C_0\sigma}{\sqrt{n}} - \frac{h}{n}}\frac{\sqrt{n}}{\sigma }},
\end{align*} 
which is invariant of $j$ or $k$. As a result,
\begin{align*}
&\E \ell_m(\hat V^{\BM,m},\hat z^\MLE)\indic{\norm{W}\leq C_0\sqrt{n}}\\
 &\leq  72 \sum_{0\leq k\leq \lceil n\epsilon/h\rceil}  \Phi\br{ \br{1-\epsilon-4\delta - \frac{3C_0\sigma}{\sqrt{n}} - \frac{h}{n}}\frac{\sqrt{n}}{\sigma }}+  \frac{72h^2}{\delta^2 n^2}\\
 &\leq 72  \left\lceil \frac{n\epsilon}{h}\right\rceil\Phi\br{ \br{1-\epsilon-4\delta - \frac{3C_0\sigma}{\sqrt{n}} - \frac{h}{n}}\frac{\sqrt{n}}{\sigma }}+  \frac{72h^2}{\delta^2 n^2}.\numberthis\label{eqn:thm4_proof_4}
\end{align*}
Recall that $\epsilon =\br{\frac{8C_0\sigma}{\sqrt{n}}}^\frac{1}{2}$ and $\delta =49 \br{\frac{C_0\sigma}{\sqrt{n}}}^\frac{1}{2}$. 
Set $h= n\ebr{-\frac{n}{8\sigma^2}}$. 
Then (\ref{eqn:thm4_proof_4}) becomes
\begin{align*}
&\E \ell_m(\hat V^{\BM,m},\hat z^\MLE)\indic{\norm{W}\leq C_0\sqrt{n}}\\
&\leq 72 \ebr{\frac{n}{8\sigma^2}} \Phi\br{\br{1-\br{(196+2\sqrt{2})\br{\frac{C_0\sigma}{\sqrt{n}}}^\frac{1}{2} + \frac{3C_0\sigma}{\sqrt{n}} +  \ebr{-\frac{n}{8\sigma^2}}}}\frac{\sqrt{n}}{\sigma}}\\
&\quad  + \frac{1}{100C_0} \br{\frac{n}{\sigma^2}}^\frac{1}{2} \ebr{- \frac{n}{4\sigma^2}}.
\end{align*}
Note that $\Phi(x)\leq \frac{2}{\sqrt{\pi}}\ebr{-\frac{x^2}{2}}$ for any $x>0$. 
Then there exists some constant $C_1>0$ that only depends on $C_0$, such that if $\frac{n}{\sigma^2}\geq C_1$, we have
\begin{align}\label{eqn:thm4_proof_2}
(196+2\sqrt{2})\br{\frac{C_0\sigma}{\sqrt{n}}}^\frac{1}{2} + \frac{3C_0\sigma}{\sqrt{n}} +  \ebr{-\frac{n}{8\sigma^2}} \leq \frac{1}{4},
\end{align}
and consequently,
\begin{align*}
&\E \ell_m(\hat V^{\BM,m},\hat z^\MLE)\indic{\norm{W}\leq C_0\sqrt{n}}\\
&\leq 72 \ebr{\frac{n}{8\sigma^2}} \Phi\br{ \frac{3\sqrt{n}}{4\sigma}}+ \frac{1}{100C_0} \br{\frac{n}{\sigma^2}}^\frac{1}{2} \ebr{- \frac{n}{4\sigma^2}}\\
&\leq 72 \ebr{\frac{n}{8\sigma^2}}  \frac{2}{\sqrt{\pi}}\ebr{ -\frac{9n}{32\sigma^2}} +  \frac{1}{100C_0} \br{\frac{n}{\sigma^2}}^\frac{1}{2} \ebr{- \frac{n}{4\sigma^2}}\\
&= \frac{144}{\sqrt{\pi}} \ebr{-\frac{5n}{32\sigma^2}} + 2 \ebr{- \frac{n}{6\sigma^2}}\leq \ebr{-\frac{n}{8\sigma^2}}.
\end{align*}
The proof for (\ref{eqn:thm4_1}) is complete with (\ref{eqn:upper_bound_Gaussian_proof_1}).

To prove the second part of the theorem, note that from (\ref{eqn:thm4_proof_3}), we can also get a in-probability bound. Set $h= \delta\sqrt{n}$. Then  (\ref{eqn:thm4_proof_3}) becomes
\begin{align*}
 &\ell_m(\hat V^{\BM,m},\hat z^\MLE)\indic{\norm{W}\leq C_0\sqrt{n}}\\
 &\leq  72 \sum_{0\leq k\leq \lceil n\epsilon/h\rceil} \br{ \frac{1}{n}\sum_{j\in[n]}  \indic{\sigma \abs{W_{j\cdot} z_{s_k}^{(-j)}} > \br{1-\epsilon-4\delta - \frac{3C_0\sigma}{\sqrt{n}}}n-h} }. 
\end{align*}
Using Markov inequality, we have
\begin{align*}
&\pbr{\ell_m(\hat V^{\BM,m},\hat z^\MLE)\indic{\norm{W}\leq C_0\sqrt{n}} >0} \\
&\leq \pbr{72 \sum_{0\leq k\leq \lceil n\epsilon/h\rceil} \br{ \frac{1}{n}\sum_{j\in[n]}  \indic{\sigma \abs{W_{j\cdot} z_{s_k}^{(-j)}} > \br{1-\epsilon-4\delta - \frac{3C_0\sigma}{\sqrt{n}}}n-h} }>0}\\
&= \pbr{ \sum_{0\leq k\leq \lceil n\epsilon/h\rceil} \br{ \frac{1}{n}\sum_{j\in[n]}  \indic{\sigma \abs{W_{j\cdot} z_{s_k}^{(-j)}} > \br{1-\epsilon-4\delta - \frac{3C_0\sigma}{\sqrt{n}}}n-h} }>0}\\
&= \pbr{ \sum_{0\leq k\leq \lceil n\epsilon/h\rceil} \br{ \frac{1}{n}\sum_{j\in[n]}  \indic{\sigma \abs{W_{j\cdot} z_{s_k}^{(-j)}} > \br{1-\epsilon-4\delta - \frac{3C_0\sigma}{\sqrt{n}}}n-h} } \geq \frac{1}{n}}\\
&\leq n\E \br{ \sum_{0\leq k\leq \lceil n\epsilon/h\rceil} \br{ \frac{1}{n}\sum_{j\in[n]}  \indic{\sigma \abs{W_{j\cdot} z_{s_k}^{(-j)}} > \br{1-\epsilon-4\delta - \frac{3C_0\sigma}{\sqrt{n}}}n-h} }}\\
& =  \sum_{0\leq k\leq \lceil n\epsilon/h\rceil} \br{ \sum_{j\in[n]}  \pbr{\sigma \abs{W_{j\cdot} z_{s_k}^{(-j)}} > \br{1-\epsilon-4\delta - \frac{3C_0\sigma}{\sqrt{n}}}n-h} },
\end{align*}
 Then by the same simplification as used in the derivation of (\ref{eqn:thm4_proof_4}), we have
\begin{align*}
&\pbr{\ell_m(\hat V^{\BM,m},\hat z^\MLE)\indic{\norm{W}\leq C_0\sqrt{n}} >0} \leq  n  \left\lceil \frac{n\epsilon}{h}\right\rceil\Phi\br{ \br{1-\epsilon-4\delta - \frac{3C_0\sigma}{\sqrt{n}} - \frac{h}{n}}\frac{\sqrt{n}}{\sigma }}.
\end{align*}
Recall that $\epsilon =\br{\frac{8C_0\sigma}{\sqrt{n}}}^\frac{1}{2}$, $\delta =49 \br{\frac{C_0\sigma}{\sqrt{n}}}^\frac{1}{2}$, and $h=\delta\sqrt{n}$, we have
\begin{align*}
&\pbr{\ell_m(\hat V^{\BM,m},\hat z^\MLE)\indic{\norm{W}\leq C_0\sqrt{n}} >0} \\
&\leq  n  \left\lceil \frac{2\sqrt{2n}}{49}\right\rceil\Phi\br{ \br{1-(196+2\sqrt{2})\br{\frac{C_0\sigma}{\sqrt{n}}}^\frac{1}{2}  - \frac{3C_0\sigma}{\sqrt{n}} - \frac{49}{\sqrt{n}} \br{\frac{C_0\sigma}{\sqrt{n}}}^\frac{1}{2}}\frac{\sqrt{n}}{\sigma }}\\
 &\leq n  \left\lceil \frac{2\sqrt{2n}}{49}\right\rceil\Phi\br{ \br{1-\br{196+2\sqrt{2} + \frac{49}{\sqrt{n}}}\br{\frac{C_0\sigma}{\sqrt{n}}}^\frac{1}{2} - \frac{3C_0\sigma}{\sqrt{n}} }\frac{\sqrt{n}}{\sigma }}.
\end{align*}
Similar to (\ref{eqn:thm4_proof_2}), there exists some constant $C_2>0$ that only depends on $C_0$ such that if $\frac{n}{\sigma^2}\geq C_2$, we have
\begin{align*}
\br{196+2\sqrt{2} + \frac{49}{\sqrt{n}}}\br{\frac{C_0\sigma}{\sqrt{n}}}^\frac{1}{2} + \frac{3C_0\sigma}{\sqrt{n}} \leq \frac{1}{4}.
\end{align*}
Then we have
\begin{align*}
&\pbr{\ell_m(\hat V^{\BM,m},\hat z^\MLE)\indic{\norm{W}\leq C_0\sqrt{n}} >0} \leq  n  \left\lceil \frac{2\sqrt{2n}}{49}\right\rceil\Phi\br{\frac{3\sqrt{n}}{4\sigma}} \\
&\leq n  \left\lceil \frac{2\sqrt{2n}}{49}\right\rceil \frac{2}{\sqrt{\pi}}\ebr{-\frac{9n}{32\sigma^2}}\leq \frac{1}{2}n^\frac{3}{2}\ebr{-\frac{9n}{32\sigma^2}}\leq \frac{1}{2}n^{-1},
\end{align*}
where the last inequality holds under the assumption $\frac{n}{\sigma^2}\geq 9\log n$.
Hence,
\begin{align*}
&\pbr{\ell_m(\hat V^{\BM,m},\hat z^\MLE) >0} \\
 &\leq \pbr{\ell_m(\hat V^{\BM,m},\hat z^\MLE)\indic{\norm{W}\leq C_0\sqrt{n}} >0}  +\pbr{\ell_m(\hat V^{\BM,m},\hat z^\MLE)\indic{\norm{W}> C_0\sqrt{n}} >0}  \\
  &\leq \pbr{\ell_m(\hat V^{\BM,m},\hat z^\MLE)\indic{\norm{W}\leq C_0\sqrt{n}} >0}  +\pbr{\norm{W}> C_0\sqrt{n}}\\
  &\leq  \frac{1}{2}n^{-1} +n^{-10}\leq n^{-1}.
\end{align*}

\end{proof}

\bibliographystyle{plain}
\bibliography{tightness}

\newpage
\thispagestyle{empty}
\setcounter{page}{1}
\begin{center}
\uppercase{\large Supplementary Material: Tightness of SDP and Burer-Monteiro Factorization for Phase Synchronization in High-Noise Regime}
\medskip

{BY Anderson Ye Zhang}
\medskip

{University of Pennsylvania}
\end{center}

\setcounter{section}{0}
\renewcommand{\thesection}{SM\arabic{section}}

%

\section{Proofs of Lemmas in Section \ref{sec:2}}\label{sec:proof_2}
We defer the proof of Lemma  \ref{lem:lm_contraction} to Section \ref{sec:lemmas} as the lemma is a direct generalization of Lemma 12 of \cite{zhong2018near} and our proof follows theirs.

\begin{proof}[Proof of Lemma \ref{lem:normalization}]
To prove (\ref{eqn:lem_normalization_1}), let $\theta\in[0,\pi]$ be the angle between $x$ and $y$. By  the cosine formula of triangles,  we have $\|x - y\|^2=\|x\|^2 +\|y\|^2 - 2\|x\|\|y\|\cos(\theta)$ and  $ \|x/\|x\| - y/\|y\|\|^2= 2-2\cos(\theta)$.  Consider the following scenarios. 
\begin{itemize}
\item If $\|x\|,\|y\| \geq t$, since $\|x\|^2 + \|y\|^2 \geq 2\|x\|\|y\|$, we have
\begin{align*}
\|x - y\|^2 &\geq 2\|x\|\|y\| (1- \cos(\theta))\geq 2t^2(1- \cos(\theta)) = t^2 \|x/\|x\| - y/\|y\|\|^2.
\end{align*}
Hence, $\|x/\|x\| - y/\|y\|\| \leq \|x-y\|/t$.
\item If $\|y\|\geq t >\|x\|$ and $\cos(\theta)\geq 0$, define a function $f(a,b)=a^2 + b^2 -2ab\cos(\theta)$ for $a,b\in\mathr$. Note that for any $1\geq a>0,b\geq 1$, we have $f(a,b)\geq 1-\cos^2(\theta)$. This is because $f(a,b)\geq  \min_{b'\geq 1} f(a,b') = f(a,1) = a^2 + 1 -2a\cos(\theta) \geq \min_{1\geq a' >0} f(a',1) =f(\cos(\theta),1)= 1-\cos^2(\theta)$. Hence, 
\begin{align*}
\frac{2\norm{x-y}^2}{t^2} &= 2\br{\br{\frac{\norm{x}}{t}}^2 + \br{\frac{\norm{y}}{t}}^2 - \frac{\norm{x}}{t}\frac{\norm{y}}{t}\cos(\theta)} \\
& \geq 2(1-\cos^2(\theta))\\
&\geq 2(1-\cos(\theta)) \\
& = \norm{\frac{x}{\norm{x}} - \frac{y}{\norm{y}}}^2.
\end{align*}
Hence, $\|x/\|x\| - y/\|y\|\| \leq \sqrt{2}\norm{x-y} /t$.
\item If $\|y\|\geq t >\|x\|$ and $\cos(\theta)< 0$, we have $\norm{x-y}^2 \geq \norm{y}^2 \geq t^2$ and $\|x/\|x\| - y/\|y\|\| \leq 2$. Hence, $\|x/\|x\| - y/\|y\|\| \leq 2\norm{x-y}/t$.
\item If $\norm{y}< t$, we have $\|x/\|x\| - y/\|y\|\| \leq 2= 2\indic{\norm{y}< t}$.
\end{itemize}
The proof of (\ref{eqn:lem_normalization_1}) is complete.

To prove (\ref{eqn:lem_normalization_2}), we only need to consider scenarios $x=0$ or $y=0$, as otherwise (\ref{eqn:lem_normalization_2}) is reduced to (\ref{eqn:lem_normalization_1}). If $y=0$, we have
\begin{align*}
&\norm{\br{\frac{x}{\norm{x}}\indic{x\neq 0} + u\indic{x=0}} - \br{ \frac{y}{\norm{y}}\indic{y\neq 0} + v\indic{y=0}}  } \\
&= \norm{\br{\frac{x}{\norm{x}}\indic{x\neq 0} + u\indic{x=0}}  - v}\leq 2=2\indic{\norm{y}<t}.
\end{align*}
If $x=0$ and $y\neq 0$, we have 
\begin{align*}
&\norm{\br{\frac{x}{\norm{x}}\indic{x\neq 0} + u\indic{x=0}} - \br{ \frac{y}{\norm{y}}\indic{y\neq 0} + v\indic{y=0}}  } \\
& = \norm{u - \frac{y}{\norm{y}}} \leq 2 = 2\indic{\norm{y}\geq t} + 2\indic{\norm{y}<t} = 2\indic{\norm{x-y}\geq t} + 2\indic{\norm{y}<t}\\
&\leq \frac{2\norm{x-y}}{t} + 2\indic{\norm{y}<t}.
\end{align*}
The proof of (\ref{eqn:lem_normalization_2}) is complete.
\end{proof}

\begin{proof}[Proof of Lemma \ref{lem:MLE_SDP_Loose_bound}]
Consider any $m\in\mathbb{N}\setminus\{1\}$. For simplicity, we write $\hat Z^{\BM,m}$ as $\wh{Z}$ so that $\wh{Z} =(\hat V^{\BM,m})^{\H}\hat V^{\BM,m}$. 

First, we are going to show
\begin{align}\label{eqn:MLE_SDP_Loose_bound_proof_1}
\ell(\hat V^{\BM,m},z^*) \leq \frac{4}{n^2}\Tr(z^*z^{*\H}(z^*z^{*\H}-\wh{Z})).
\end{align}
Define $b=n^{-1}\sum_{j=1}^n\hat V^{\BM,m}_j{z}_j^* = n^{-1} \hat V^{\BM,m} z^*\in\mathc^m$. If $b=0$, we have
\begin{align*}
\Tr(z^*z^{*\H}(z^*z^{*\H}-\wh{Z})) &= \Tr(z^* (z^*)^\H z^* (z^*)^\H) - \Tr(z^*z^{*\H}(\hat V^{\BM,m})^{\H}\hat V^{\BM,m})\\
& = n\Tr(z^*z^{*\H}) - \Tr(z^*(nb)^\H \hat V^{\BM,m}) \\
& = n^2.
\end{align*}
Note that $\ell(\hat V^{\BM,m},z^*) \leq n^{-1}\sum_{j\in[n]}4 = 4$. Then (\ref{eqn:MLE_SDP_Loose_bound_proof_1}) holds. In the following, we assume $b\neq 0.$
From Lemma \ref{lem:normalization}, we have for any $x,y\in\mathc^m$ such that $x\neq 0$ and $\norm{y}=1$, $\norm{x/\norm{x}-y}\leq 2\norm{x-y}$. 
Hence, we have
\begin{eqnarray*}
\ell(\hat V^{\BM,m},z^*) &=& \min_{a\in\mathbb{C}^n:\|a\|^2=1}\frac{1}{n}\sum_{j=1}^n\|\hat V^{\BM,m}_j{z}_j^*-a\|^2 \\
&=& \min_{a\in\mathbb{C}^n\backslash\{0\}}\frac{1}{n}\sum_{j=1}^n\|\hat V^{\BM,m}_j{z}_j^*-a/\|a\|\|^2 \\
&\leq& \min_{a\in\mathbb{C}^n\backslash\{0\}}\frac{4}{n}\sum_{j=1}^n\|\hat V^{\BM,m}_j{z}_j^*-a\|^2.
\end{eqnarray*}
Since the minimum of the above display is achieved when $a$ is the arithmetic mean of $\{\hat V^{\BM,m}_j{z}_j^*\}_{j\in[n]}$, i.e., $b$, we have
\begin{eqnarray*} 
\ell(\hat V^{\BM,m},z^*) &\leq & \frac{4}{n}\sum_{j=1}^n\|\hat V^{\BM,m}_j{z}_j^*-b\|^2 \\
&=& \frac{2}{n^2}\sum_{j=1}^n\sum_{l=1}^n\left(\|\hat V^{\BM,m}_j{z}_j^*-b\|^2+\|\hat V^{\BM,m}_l{z}_l^*-b\|^2\right) \\
&=& \frac{2}{n^2}\sum_{j=1}^n\sum_{l=1}^n\|\hat V^{\BM,m}_j{z}_j^*-\hat V^{\BM,m}_l{z}_l^*\|^2 \\
&=& \frac{4}{n^2}\sum_{j=1}^n\sum_{l=1}^n(1-\bar{z}_j^*{z}_l^*(\hat V^{\BM,m}_j)^{\H}\hat V^{\BM,m}_l) \\
&=& \frac{4}{n^2}\Tr(z^*z^{*\H}(z^*z^{*\H}-\wh{Z})).
\end{eqnarray*}
Therefore, (\ref{eqn:MLE_SDP_Loose_bound_proof_1}) holds.

Now it remains to upper bound $\Tr(z^*z^{*\H}(z^*z^{*\H}-\wh{Z}))$. By the definition (\ref{eqn:BM_m_def}), we have $\Tr(Y\wh{Z})\geq \Tr(Yz^*z^{*\H})$. Rearranging this inequality, we obtain $\Tr(Y(\wh{Z}-z^*z^{*\H}))\geq 0$. With (\ref{eqn:phase_model_matrix}), we have
\begin{align*}
\Tr(z^*z^{*\H}(z^*z^{*\H}-\wh{Z})) &\leq \Tr\left((Y -z^*z^{*\H})(\wh{Z}-z^*z^{*\H})\right)\\
&= \sigma\Tr\left( W(\wh{Z}-z^*z^{*\H})\right)\\
&\leq \sigma \abs{\Tr\left( W\wh{Z}\right)} +  \sigma \abs{\Tr\left( Wz^*z^{*\H}\right)} \\
& \leq  \sigma \norm{W} \Tr\left(\wh{Z}\right) + \sigma \norm{W} \Tr\left(z^*z^{*\H}\right)\\
&= 2n\sigma \norm{W}.
\end{align*}
Here, the last inequality is due to the following facts. For any two matrices $A,B\in\mathc^{n\times n}$, $\Tr(AB) \leq \norm{A} \norm{B}_*$, where $\norm{B}_*$ is the nuclear norm of $B$ that is equal to the summation of all its singular values. If $B$ is further assumed to be positive semi-definite, we have $\norm{B}_* = \Tr(B)$. In our setting, $\hat Z$ is positive semi-definite as $\min_{u\in\mathc^n} u^H \hat Z u =\min_{u\in\mathc^n} u^\H (\hat V^{\BM,m})^{\H}\hat V^{\BM,m} u \geq 0$, and so is $z^* (z^*)^\H$.

Consequently, we have $\ell(\hat V^{\BM,m},z^*)\leq \frac{8\sigma \norm{W}}{n}$. The upper bound for $\ell_1(\hat z^\MLE, z^*)$ can be established following the same steps as above and hence its proof is omitted.
\end{proof}

\section{Proofs of Lemmas in Section \ref{sec:step1}}
\begin{proof}[Proof of Lemma \ref{lem:gt}]
Consider the following scenarios. If $|x|,|y| \leq t$, we have $\abs{g_t(x) - g_t(y)}  = \frac{|x-y|}{t}$ by definition.
 If $|x|,|y| \geq t$, then 
 \begin{align*}
\abs{g_t(x) - g_t(y)}  &= \abs{\frac{x}{|x|} - \frac{y}{|y|}},
\end{align*}
Let $\theta\in[0,\pi]$ be the angle between $x$ and $y$ on the complex plane. By  the cosine formula of triangles,  we have $|x - y|^2=|x|^2 +|y|^2 - 2|x||y|\cos(\theta)$ and  $ \abs{g_t(x) - g_t(y)}^2= 2-2\cos(\theta)$. Since $|x|^2 + |y|^2 \geq 2|x||y|$, we have
\begin{align*}
|x - y|^2 &\geq 2|x||y| (1- \cos(\theta))\geq 2t^2(1- \cos(\theta)) = t^2\abs{g_t(x) - g_t(y)}^2,
\end{align*}
which yields the desired result.
If $|x|\geq t >|y|$, then
\begin{align*}
\abs{g_t(x) - g_t(y)}  &= \abs{\frac{x}{|x|} - \frac{y}{t}}.%
\end{align*}
By using the cosine formula  again, we have $\abs{g_t(x) - g_t(y)}^2 = 1+\frac{|y|^2}{t^2} - 2\frac{|y|}{t}\cos(\theta)$ and $\abs{\frac{x}{t} - \frac{y}{t}}^2 = \frac{|x|^2}{t^2} + \frac{|y|^2}{t^2} - 2\frac{|x||y|}{t^2}\cos(\theta)$. Then,
\begin{align*}
\frac{|x-y|^2}{t^2} - \abs{g_t(x) - g_t(y)}^2 & = \abs{\frac{x}{t} - \frac{y}{t}}^2  - \abs{g_t(x) - g_t(y)}^2\\
&= \frac{|x|^2}{t^2} - 1 - 2\frac{|x||y|}{t^2}\cos(\theta) + 2\frac{|y|}{t}\cos(\theta)\\
&= \br{\frac{|x|}{t}-1}\br{\frac{|x|}{t}+1} - 2 \br{\frac{|x|}{t}-1}\frac{|y|}{t}\cos(\theta)\\
&= \br{\frac{|x|}{t}-1} \br{\frac{|x|}{t}+1 - 2\frac{|y|}{t}\cos(\theta)} \\
&\geq 0,
\end{align*}
where the last inequality is due to that $\frac{|x|}{t}\geq 1 >\frac{|y|}{t}\geq 0$ and $\cos(\theta)\leq 1$. The scenario $|y|\geq t >|x|$ can be proved similarly.
\end{proof}

\begin{proof}[Proof of Lemma \ref{lem:G_propoerties}]
We prove the properties sequentially.
\begin{enumerate}
\item Recall the definition of $G$ in (\ref{eqn:G_def}).
For any $j\in[n]$, by Lemma \ref{lem:gt}, we have
\begin{align*}
\abs{[G(x,s,t)]_j - [G(y,s,t)]_j}  &= \abs{g_t(z_j^* s + \sigma[Wx]_j)  - g_t(z_j^* s + \sigma[Wy]_j)} \\
&\leq t^{-1} \abs{\br{z_j^* s + \sigma[Wx]_j} - \br{z_j^* s + \sigma[Wx]_j}} \\
& = t^{-1} \sigma\abs{[W(x-y)]_j}.
\end{align*}
Summing over all $j\in[n]$, we have
\begin{align*}
\norm{G(x,s,t) - G(y,s,t)}^2 & \leq \sum_{j\in[n]}\abs{[G(x,s,t)]_j - [G(y,s,t)]_j}^2\\
&\leq t^{-2}\sigma^2 \sum_{j\in[n]} \abs{[W(x-y)]_j}^2 \\
& = t^{-2}\sigma^2 \norm{W(x-y)}^2\\
&\leq t^{-2}\sigma^2 \norm{W}^2 \norm{x-y}^2.
\end{align*}
\item Using the first property, for any $T\in\mathbb{N}$, we have
\begin{align*}
\norm{z^{(T+1)} - z^{(T)}} &= \norm{G(z^{(T)},s,t)-G(z^{(T-1)},s,t)} \\
&\leq t^{-1}\sigma\norm{W} \norm{z^{(T)} - z^{(T-1)}}\\
&\leq \frac{1}{2}\norm{z^{(T)} - z^{(T-1)}},
\end{align*}
where the last inequality is due to the assumption $t\geq 2\sigma\norm{W}$.
\item Consider the sequence $z^{(0)}=z^*$ and  $z^{(T)} = G(z^{(T-1)},s,t)$ for all $T\in\mathbb{N}$. By the second property, we have $\norm{z^{(T+1)} - z^{(T)}} \leq \frac{1}{2}\norm{z^{(T)} - z^{(T-1)}}$ for all $T\in\mathbb{N}$. Note that $\{z^{(T)}\}$ is a sequence in $\mathcleq^n$, a complete metric space under $\norm{\cdot}$. Hence, the sequence converges to a limit $z^{(\infty)}\in\mathcleq^n$ which satisfies $z^{(\infty)} = G(z^{(\infty)},s,t)$. Hence, $z^{(\infty)}$ is a fixed point of $G(\cdot,s,t)$. Now we have proved the existence of the fixed point. To prove the uniqueness, note that if there exists another $z'\in\mathcleq^n$ such that $z' = G(z',s,t)$, we have
\begin{align*}
\norm{z^{(\infty)}-z'} = \norm{G(z^{(\infty)},s,t)- G(z',s,t)}\leq t^{-1}\sigma \norm{z^{(\infty)}-z'} \leq \norm{z^{(\infty)}-z'}/2,
\end{align*}
by the first property. Hence, $\norm{z^{(\infty)}-z'} =0$ which means $z^{(\infty)}=z'$.
\item For any $j\in[n]$, we have
\begin{align*}
\abs{[z^* s + \sigma Wz]_j - [z^* s' + \sigma Wz']_j} &\leq \abs{z^*_j s- z^*_j s'} + \sigma \abs{[W(z-z')]_j} \\
&\leq \abs{s-s'} + \sigma \abs{[W(z-z')]_j}.
\end{align*}
Summing over all $j\in[n]$, we have
\begin{align*}
\norm{\br{z^* s + \sigma Wz} - \br{z^* s' + \sigma Wz'}}^2 &\leq \sum_{j\in[n]}\br{\abs{s-s'} + \sigma \abs{[W(z-z')]_j}}^2\\
&\leq \sum_{j\in[n]} \br{2\abs{s-s'}^2 + 2 \sigma^2 \abs{[W(z-z')]_j}^2}\\
&\leq 2n\abs{s-s'}^2 + 2 \sigma^2  \norm{W}^2 \norm{z-z'}^2. \numberthis\label{lem:G_propoerties_proof_1}
\end{align*}

Note that for any $j\in[n]$, we have $z_j = [G(z,s,t)]_j=g_t([z^* s + \sigma Wz]_j)$ and similarly $z'_j = g_t([z^* s' + \sigma Wz']_j)$. Hence, by Lemma \ref{lem:gt}, we have
\begin{align*}
\abs{z_j-z'_j} &\leq t^{-1}\abs{[z^* s + \sigma Wz]_j - [z^* s' + \sigma Wz']_j}.
\end{align*}
Summing over all $j\in[n]$, by (\ref{lem:G_propoerties_proof_1}), we have
\begin{align*}
\norm{z-z'}^2&\leq t^{-2}\norm{\br{z^* s + \sigma Wz} - \br{z^* s' + \sigma Wz'}}^2 \\
&\leq 2nt^{-2}\abs{s-s'}^2 + 2 \sigma^2 t^{-2}  \norm{W}^2 \norm{z-z'}^2\\
&\leq 2nt^{-2}\abs{s-s'}^2+ \frac{1}{2}\norm{z-z'}^2,
\end{align*}
where the last inequality is due to the assumption $t\geq 2\sigma\norm{W}$. After rearrangement, we have $\norm{z-z'}^2\leq 4nt^{-2}\abs{s-s'}^2$. From (\ref{lem:G_propoerties_proof_1}), we have
\begin{align*}
\norm{\br{z^* s + \sigma Wz} - \br{z^* s' + \sigma Wz'}}^2 &\leq  2n\abs{s-s'}^2 + 2 \sigma^2  \norm{W}^2 \br{4nt^{-2}\abs{s-s'}^2}\\
&\leq 4n\abs{s-s'}^2,
\end{align*}
where the last inequality is by $t\geq 2\sigma\norm{W}$.

\end{enumerate}
\end{proof}

\begin{proof}[Proof of Lemma \ref{lem:G_approximation_error}]
Consider any $j\in[n]$. If $|z^*_j s + \sigma [Wz]_j|\geq t$, we have $[G(z,s,t)]_{j} =g_t(z^*_j s + \sigma [Wz]_j) = (z^*_j s + \sigma [Wz]_j)/|z^*_j s + \sigma [Wz]_j| = [F_1'(z,s)]_j$. If $|z^*_j s + \sigma [Wz]_j|\geq t$ is not satisfied, we have $|[F_1'(z,s)]_j| =1$ and $|[G(z,s,t)]_{j} |\leq 1$. Hence,
\begin{align*}
\abs{[F_1'(z,s)]_j - [G(z,s,t)]_{j}} &= \abs{[F_1'(z,s)]_j - [G(z,s,t)]_{j}} \indic{|z^*_j s + \sigma [Wz]_j| < t}\\
&\leq 2 \indic{|z^*_j s + \sigma [Wz]_j| < t}.
\end{align*}
Summing over all $j\in[n]$, we have
\begin{align*}
\norm{F_1'(z, s) - G(z,s,t)}^2 &= \sum_{j\in[n]}\abs{[F_1'(z,s)]_j - [G(z,s,t)]_{j}} ^2 \leq  4 \sum_{j\in[n]}\indic{|z^*_j s + \sigma [Wz]_j| < t}.
\end{align*}
\end{proof}

\begin{proof}[Proof of Lemma \ref{lem:MLE_G_fixed_points}]
Consider any $t>0$. From (\ref{eqn:MLE_F_1_prime_fixed}), we have $\hat z^\MLE = F'_1(\hat z^\MLE,\hat s)$.
Then
\begin{align*}
\norm{\hat z^\MLE - z} &= \norm{F_1'(\hat z^\MLE,\hat s) - G(z,\hat s,t)} \\
&\leq   \norm{F_1'(\hat z^\MLE,\hat s) -  F_1'(z,\hat s)} + \norm{ F_1'(z,\hat s) - G(z,\hat s,t)}\\
&\leq \norm{F_1'(\hat z^\MLE,\hat s) -  F_1'(z,\hat s)} + \sqrt{4 \sum_{j\in[n]}\indic{|z^*_j \hat s + \sigma [Wz]_j| < t}},
\end{align*}
where the last inequality is due to Lemma \ref{lem:G_approximation_error}. Hence,
\begin{align*}
\norm{\hat z^\MLE - z}^2  \leq  2\norm{F_1'(\hat z^\MLE,\hat s) -  F_1'(z,\hat s)}^2 + 8 \sum_{j\in[n]}\indic{|z^*_j \hat s + \sigma [Wz]_j| < t}.
\end{align*}

Recall the definition of $F_1'$ in (\ref{eqn:F_1_prime_def}). For any $j\in[n]$, note that $[F_1'(\hat z^\MLE,\hat s)]_j$ and $[F_1'(z,\hat s)]_j $ can be written as
\begin{align*}
[F_1'(\hat z^\MLE,\hat s)]_j &= \frac{ z^*_j \hat s + \sigma [W \hat z^\MLE]_j}{| z^*_j \hat s + \sigma [W\hat z^\MLE]_j|} \indic{ z^*_j \hat s + \sigma [W\hat z^\MLE]_j \neq 0} + \hat z^\MLE_j \indic{ z^*_j \hat s + \sigma [W\hat z^\MLE]_j = 0},\\
[F_1'(z,\hat s)]_j &= \frac{ z^*_j \hat s + \sigma [Wz]_j}{| z^*_j \hat s + \sigma [Wz]_j|} \indic{ z^*_j \hat s + \sigma [Wz]_j \neq 0} + z_j \indic{ z^*_j \hat s + \sigma [Wz]_j = 0}.
\end{align*}
By applying (\ref{eqn:lem_normalization_2}) of Lemma \ref{lem:normalization},
\begin{align*}
 \abs{[F_1'(\hat z^\MLE,\hat s)]_j -  [F_1'(z,\hat s)]_j}
 &\leq \frac{2 \abs{\br{ z^*_j \hat s + \sigma [W \hat z^\MLE]_j} - \br{ z^*_j \hat s + \sigma [Wz]_j}}}{t} + 2\indic{\abs{ z^*_j \hat s + \sigma [Wz]_j}<t} \\
 &\leq \frac{2\sigma\abs{[W( \hat z^\MLE-z)]_j}}{t}+ 2\indic{\abs{ z^*_j \hat s + \sigma [Wz]_j}<t}.
\end{align*}
Summing over all $j\in[n]$, we have 
\begin{align*}
\norm{F_1'(\hat z^\MLE,\hat s) -  F_1'(z,\hat s)}^2 &= \sum_{j\in[n]} \abs{[F_1'(\hat z^\MLE,\hat s)]_j -  [F_1'(z,\hat s)]_j}^2\\
&= \sum_{j\in[n]} \br{\frac{4\sigma^2\abs{[W( \hat z^\MLE-z)]_j}^2}{t^2} + 4\indic{\abs{ z^*_j \hat s + \sigma [Wz]_j}<t}}\\
& =  \frac{4\sigma^2}{t^2}  \norm{W ( \hat z^\MLE-z)}^2  + 4  \sum_{j\in[n]}  \indic{\abs{ z^*_j \hat s + \sigma [Wz]_j}<t}\\
&\leq  \frac{4\sigma^2}{t^2} \norm{W}^2 \norm{ \hat z^\MLE-z}^2 + 4  \sum_{j\in[n]}  \indic{\abs{ z^*_j \hat s + \sigma [Wz]_j}<t}.
\end{align*}
Hence,
\begin{align*}
\norm{\hat z^\MLE - z}^2 &\leq 2\br{\frac{4\sigma^2}{t^2} \norm{W}^2 \norm{ \hat z^\MLE-z}^2 + 4  \sum_{j\in[n]}  \indic{\abs{ z^*_j \hat s + \sigma [Wz]_j}<t}}  \\
&\quad +  8 \sum_{j\in[n]}\indic{|z^*_j \hat s + \sigma [Wz]_j| < t}\\
& =  \frac{8\sigma^2}{t^2} \norm{W}^2 \norm{ \hat z^\MLE-z}^2 + 16  \sum_{j\in[n]}  \indic{\abs{ z^*_j \hat s + \sigma [Wz]_j}<t}.
\end{align*}
When $t\geq 4\sigma \norm{W}$, we have $ \frac{8\sigma^2}{t^2} \norm{W}^2 \leq 1/2$ and the above display leads to
$\norm{\hat z^\MLE - z}^2 \leq 32\sum_{j\in[n]}  \indic{\abs{ z^*_j \hat s + \sigma [Wz]_j}<t}.$
\end{proof}

\begin{proof}[Proof of Lemma \ref{lem:MLE_extreme_points_upper_bound_2}]
Recall the definitions of $G$ in (\ref{eqn:G_def}) and $g_t$ in (\ref{eqn:gt_def}). Note that for any $t>0,a\in\mathc_1,x\in\mathc$, we have $ag_t(x) = g_t(ax)$. Hence, for any $z\in\mathc_{\leq 1}^n,s\in\mathc,t>0$, $a\in\mathc_1$, and $j\in[n]$, we have $a[G(z,s,t)]_j = ag_t([z^*s+\sigma Wz]_j) = g_t(a[z^*s+\sigma Wz]_j) = g_t([z^*(as)+\sigma W(az)]_j)$. As a result, 
\begin{align*}
\text{if $z=G(z,s,t)$, then $az=G(az,as,t)$.}
\end{align*}
This means that a fixed point of $G(\cdot,s,t)$ is also a fixed point of $G(\cdot,as,t)$.

Recall the definition of $\hat s$ in (\ref{eqn:hat_s_def}). We only need to study the case that $\hat s\neq 0$ as otherwise $G(\cdot,|\hat s|,\cdot) = G(\cdot, \hat s,\cdot )$ and Lemma \ref{lem:MLE_extreme_points_upper_bound_2} is identical to Lemma \ref{lem:MLE_extreme_points_upper_bound}. Since $\hat s\neq 0$, $\hat s/|\hat s|\in\mathc_1$ is well-defined.
For any $\delta\geq \frac{2\sigma \norm{W}}{n}$, let $z\in\mathcleq^n$ be the fixed point of $G(\cdot,|\hat s|,2\delta n)$. Then we have $\frac{\hat s }{|\hat s|} z\in\mathcleq^n$ and 
\begin{align*}
\frac{\hat s}{|\hat s|} z = G\br{\frac{\hat s}{|\hat s|} z, \frac{\hat s}{|\hat s|}|\hat s| ,2\delta n} = G\br{\frac{\hat s}{|\hat s|} z, \hat s ,2\delta n}.
\end{align*}
That is, $\frac{\hat s}{|\hat s|} z$ is the fixed point of $G(\cdot,\hat s,2\delta n)$.
By Lemma \ref{lem:MLE_extreme_points_upper_bound}, we have
\begin{align*}
\frac{1}{n}\sum_{j\in[n]}\indic{\abs{[Y\hat z^\MLE]_j}<\delta n} &\leq    \frac{9}{n}\sum_{j\in[n]}  \indic{\abs{z_j^*\hat s + \sigma \left[ W\frac{\hat s}{|\hat s|} z \right]_j} < 2 \delta n}\\
&=   \frac{9}{n}\sum_{j\in[n]}  \indic{\abs{\frac{\hat s}{|\hat s|} \br{z_j^*|\hat s| + \sigma \left[ W z \right]_j}} < 2 \delta n}\\
&=   \frac{9}{n}\sum_{j\in[n]}  \indic{\abs{z_j^*|\hat s| + \sigma \left[ W z \right]_j} < 2 \delta n}.
\end{align*}
\end{proof}

\section{Proofs of Lemmas in Section \ref{sec:step2}}
The following lemma is a counterpart of Lemma \ref{lem:G_propoerties} but for $G^{(-j)}$ instead of $G$. Then Lemma \ref{lem:G_G_j_fixed_points} is the direct consequence of the third properties of Lemmas \ref{lem:G_propoerties} and \ref{lem:G_j_properties}.
\begin{lemma}\label{lem:G_j_properties}
Consider any $j\in[n]$. The function $G^{(-j)}(\cdot,\cdot,\cdot)$ has the following properties:
\begin{enumerate}
\item For any $x,y\in\mathc^n$ and for any $s\in\mathc,t>0$, we have
\begin{align*}
\norm{G^{(-j)}(x,s,t) - G^{(-j)}(y,s,t)} &\leq  t^{-1}\sigma \norm{W} \norm{x-y}^2.
\end{align*}
\item For any $s \in\mathc,t\geq 2\sigma\norm{W}$, and for any $z^{(0,-j)}\in\mathcleq^n$, define $z^{(T,-j)} = G^{(-j)}(z^{(T-1,-j)},s,t)$ for all $T\in\mathbb{N}$. Then
\begin{align*}
\norm{z^{(T+1,-j)} - z^{(T,-j)}} &\leq \frac{1}{2}\norm{z^{(T,-j)} - z^{(T-1,-j)}}, \forall T\in\mathbb{N}.
\end{align*}
\item For any $s \in\mathc,t\geq 2\sigma\norm{W}$, $G(\cdot,s,t)$ has exactly one fixed point. That is, there exists one and only one $z\in\mathcleq^n$ such that $z= G^{(-j)}(z,s,t)$. In addition, $z$ can be achieved by iteratively applying $G^{(-j)}(\cdot,s,t)$ starting from $z^*$. That is, let $z^{(0,-j)}=z^*$ and define $z^{(T,-j)} = G^{(-j)}(z^{(T-1,-j)},s,t)$ for all $T\in\mathbb{N}$. We have $z=\lim_{T\rightarrow\infty}G^{(-j)}(z^{(T,-j)},s,t)$.
\end{enumerate} 
\end{lemma}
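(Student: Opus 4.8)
The plan is to transcribe the proof of Lemma \ref{lem:G_propoerties} almost line for line, since $G^{(-j)}$ differs from $G$ only in that the data matrix $W$ is replaced by its leave-one-out version $W^{(-j)}$. The one new ingredient needed is the operator-norm comparison $\norm{W^{(-j)}} \leq \norm{W}$: writing $P$ for the diagonal projection that keeps every coordinate except the $j$th, we have $W^{(-j)} = P W P$, hence $\norm{W^{(-j)}} \leq \norm{P}\,\norm{W}\,\norm{P} \leq \norm{W}$. This is exactly what lets the threshold $t \geq 2\sigma\norm{W}$, stated in terms of $W$ rather than $W^{(-j)}$, still force a contraction.

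For the first property I would apply Lemma \ref{lem:gt} coordinatewise to $[G^{(-j)}(x,s,t)]_k = g_t([z^* s + \sigma W^{(-j)} x]_k)$ and to $[G^{(-j)}(y,s,t)]_k$, obtaining $\abs{[G^{(-j)}(x,s,t)]_k - [G^{(-j)}(y,s,t)]_k} \leq t^{-1}\sigma\,\abs{[W^{(-j)}(x-y)]_k}$ for each $k\in[n]$; squaring, summing over $k$, and using $\norm{W^{(-j)}} \leq \norm{W}$ gives $\norm{G^{(-j)}(x,s,t) - G^{(-j)}(y,s,t)} \leq t^{-1}\sigma\norm{W}\norm{x-y}$ (the squared norm on the right-hand side of the statement should read $\norm{x-y}$). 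The second property is then immediate: when $t \geq 2\sigma\norm{W}$ the Lipschitz constant $t^{-1}\sigma\norm{W}$ is at most $1/2$, so taking $x = z^{(T,-j)}$ and $y = z^{(T-1,-j)}$ in property 1 yields $\norm{z^{(T+1,-j)} - z^{(T,-j)}} \leq \tfrac12\norm{z^{(T,-j)} - z^{(T-1,-j)}}$ for all $T\in\mathbb{N}$.

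For the third property I would run the standard Banach fixed-point argument: iterating property 2 from $z^{(0,-j)} = z^*$ gives $\norm{z^{(T+1,-j)} - z^{(T,-j)}} \leq 2^{-T}\norm{z^{(1,-j)} - z^*}$, so $\{z^{(T,-j)}\}_{T\geq 0}$ is Cauchy in $\mathcleq^n$, which is complete and closed under $\norm{\cdot}$; the limit $z$ lies in $\mathcleq^n$ and satisfies $z = G^{(-j)}(z,s,t)$ by the continuity supplied by property 1, and uniqueness follows because any two fixed points $z,z'$ satisfy $\norm{z-z'} \leq \tfrac12\norm{z-z'}$ (the ``$G(\cdot,s,t)$ has exactly one fixed point'' in item 3 should of course read $G^{(-j)}(\cdot,s,t)$). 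I do not anticipate any genuine obstacle here; the argument is a direct mirror of Lemma \ref{lem:G_propoerties}, and the only point requiring a moment's care is the operator-norm comparison $\norm{W^{(-j)}} \leq \norm{W}$ above, which is what keeps every constant identical to those in Lemma \ref{lem:G_propoerties}.
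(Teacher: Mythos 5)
Your proposal is correct and matches the paper's own argument: the paper likewise reduces the lemma to the observation $\norm{W^{(-j)}}\leq\norm{W}$ and then says the rest is verbatim Lemma \ref{lem:G_propoerties}. You also correctly flag the two typos in the statement ($\norm{x-y}^2$ should be $\norm{x-y}$, and ``$G(\cdot,s,t)$'' in item 3 should read $G^{(-j)}(\cdot,s,t)$).
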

\begin{proof}
Note that $\norm{W^{(-j)}}\leq \norm{W}$ since $W^{(-j)}$ is obtained from $W$ by zeroing out the $j$th row and column. With this, the lemma can be proved following the exact same argument as in the proof of Lemma \ref{lem:G_propoerties}, and hence is omitted here.
\end{proof}

\begin{proof}[Proof of Lemma \ref{lem:leave_one_out_close}]
Consider any $T\in\mathbb{N}$. For any $k\in[n]$, by Lemma \ref{lem:gt}, we have
\begin{align*}
&\abs{z^{(T)}_k - z^{(T,-j)}_k}\\
&=\abs{[G(z^{(T-1)},s,t)]_k - [G^{(-j)}(z^{(T-1,-j)},s,t)]_k} \\
&= \abs{g_t([z^*s+\sigma Wz^{(T-1)}]_k) - g_t([z^*s +\sigma W^{(-j)} z^{(T-1,-j)}]_k)}\\
&\leq t^{-1} \abs{[z^*s+\sigma Wz^{(T-1)}]_k - [z^*s +\sigma W^{(-j)} z^{(T-1,-j)}]_k}\\
& = t^{-1} \sigma \abs{[ Wz^{(T-1)}]_k - [W^{(-j)} z^{(T-1,-j)}]_k} \\
& = t^{-1} \sigma \abs{[ Wz^{(T-1)}]_k - [ W^{(-j)}z^{(T-1)}]_k  + [ W^{(-j)}z^{(T-1)}]_k  - [W^{(-j)} z^{(T-1,-j)}]_k} \\
& =  t^{-1} \sigma \abs{[(W-W^{(-j)})z^{(T-1)}]_k  + [ W^{(-j)}(z^{(T-1)}-z^{(T-1,-j)})]_k }  \\
&\leq  t^{-1} \sigma\abs{[(W-W^{(-j)})z^{(T-1)}]_k} +  t^{-1} \sigma \abs{ [ W^{(-j)}(z^{(T-1)}-z^{(T-1,-j)})]_k }.
\end{align*}
If $k\neq j$, we have $[(W-W^{(-j)})z^{(T-1)}]_k = W_{kj}z^{(j)}_j$. Then the above display becomes
\begin{align*}
\abs{z^{(T)}_k - z^{(T,-j)}_k}  &\leq   t^{-1} \sigma \abs{W_{kj}z^{(j)}_j} + t^{-1} \sigma \abs{ [ W^{(-j)}(z^{(T-1)}-z^{(T-1,-j)})]_k }\\
&\leq  t^{-1} \sigma \abs{W_{kj}}+ t^{-1} \sigma \abs{ [ W^{(-j)}(z^{(T-1)}-z^{(T-1,-j)})]_k },
\end{align*}
where in the last inequality we use $|z^{(j)}_j|\leq 1$ as $z^{(j)}\in\mathc_{\leq 1}^n$.
Summing over all $k\in[n]$ such that $k\neq j$, we have
\begin{align*}
&\sum_{k\in[n]:k\neq j}\abs{z^{(T)}_k - z^{(T,-j)}_k}^2\\
&\leq \sum_{k\in[n]:k\neq j} \br{2t^{-2}\sigma^2 \abs{W_{kj}}^2 +2t^{-2}\sigma^2\abs{ [ W^{(-j)}(z^{(T-1)}-z^{(T-1,-j)})]_k ^2  }}\\
&\leq  \sum_{k\in[n]} \br{2t^{-2}\sigma^2 \abs{W_{kj}}^2 +2t^{-2}\sigma^2\abs{ [ W^{(-j)}(z^{(T-1)}-z^{(T-1,-j)})]_k ^2  }}\\
& = 2t^{-2}\sigma^2 \norm{W_j}^2 +  2t^{-2}\sigma^2 \norm{W^{(-j)}(z^{(T-1)}-z^{(T-1,-j)}}^2\\
&\leq 2t^{-2}\sigma^2 \norm{W_j}^2 +  2t^{-2}\sigma^2  \norm{W^{(-j)}}^2 \norm{z^{(T-1)}-z^{(T-1,-j)}}^2 \\
&\leq 2t^{-2}\sigma^2 \norm{W_j}^2 +  2t^{-2}\sigma^2  \norm{W}^2 \norm{z^{(T-1)}-z^{(T-1,-j)}}^2,
\end{align*}
where in  the last inequality, $\norm{W^{(-j)}} \leq \norm{W}$  due to that $W^{(-j)}$ is obtained from $W$ by zeroing out its $j$th row and column. On the other hand, $\abs{z^{(T)}_j - z^{(T,-j)}_j} \leq 2$. Hence,
\begin{align*}
\norm{z^{(T)} - z^{(T,-j)}}^2 &\leq 4 + \sum_{k\in[n]:k\neq j}\abs{z^{(T)}_k - z^{(T,-j)}_k}^2\\
&\leq 4 + 2t^{-2}\sigma^2 \norm{W_j}^2 +  2t^{-2}\sigma^2  \norm{W}^2 \norm{z^{(T-1)}-z^{(T-1,-j)}}^2\\
&\leq 4 + 2t^{-2}\sigma^2\norm{W}^2 +  2t^{-2}\sigma^2  \norm{W}^2 \norm{z^{(T-1)}-z^{(T-1,-j)}}^2,
\end{align*}
where  in the last inequality we use a fact that the operator norm of matrix is greater or equal to the norm of each column. 
When $t\geq 2\sigma \norm{W}$, we have $2t^{-2}\sigma^2  \norm{W}^2 \leq 1/2$ and
\begin{align*}
\norm{z^{(T)} - z^{(T,-j)}}^2&\leq \frac{9}{2} + \frac{1}{2}\norm{z^{(T-1)}-z^{(T-1,-j)}}^2.
\end{align*}
Note that $\norm{z^{(0)} - z^{(0,-j)}}^2=0$, by mathematical induction, it is easy to verify
$\norm{z^{(T)} - z^{(T,-j)}}^2 \leq 9,\forall T\in\mathbb{N}.$
Let $T\rightarrow\infty$, we have $\norm{z - z^{(-j)}}^2 \leq 9$.
\end{proof}

\section{Proofs of Lemmas in Section \ref{sec:exponential}}
\begin{proof}[Proof of Lemma \ref{lem:deterministic}]
From Corollary \ref{cor:fixed_points_MLE}, we have 
\begin{align*}
 \ell_m(\hat V^{\BM,m},\hat z^\MLE)\leq   \frac{8}{n}  \sum_{j\in[n]}\indic{\abs{[Y\hat z^\MLE]_j}<\delta n}.
\end{align*}
For each $k=0,1,2,\ldots, \lceil n\epsilon/h\rceil$, let $z_{s_k}\in\mathcleq^n$ be the fixed point of $G(\cdot,s_k,2\delta n)$.
Then by Corollary \ref{cor:MLE_extreme_points_upper_bound_grid}, we have
\begin{align*}
& \ell_m(\hat V^{\BM,m},\hat z^\MLE)\\
 &\leq   72\sum_{0\leq k\leq \lceil n\epsilon/h\rceil} \br{ \frac{1}{n}\sum_{j\in[n]}  \indic{\sigma \abs{  \left[ W z_{s_k} \right]_j} > s_k- 4 \delta n} }  +  \frac{72h^2}{\delta^2 n^2}\indic{h>\delta\sqrt{n}}.
\end{align*}
Since $2\delta n >2\sigma \norm{W}$, for each $k=0,1,2,\ldots, \lceil n\epsilon/h\rceil$, Proposition \ref{prop:extreme_points_leave_one_out} can be applied, leading to
\begin{align*}
 \frac{1}{n}\sum_{j\in[n]}  \indic{\sigma \abs{  \left[ W z_{s_k} \right]_j} > s_k- 4 \delta n}&\leq  \frac{1}{n}\sum_{j\in[n]}  \indic{\sigma \abs{ W_{j\cdot} z_{s_k}^{(-j)} } > s_k- 4 \delta n - 3\sigma \norm{W}}\\
 &\leq   \frac{1}{n}\sum_{j\in[n]}  \indic{\sigma \abs{ W_{j\cdot} z_{s_k}^{(-j)} } > (1-\epsilon)n - h- 4 \delta n - 3\sigma \norm{W}}\\
 & =  \frac{1}{n}\sum_{j\in[n]}  \indic{\sigma \abs{ W_{j\cdot} z_{s_k}^{(-j)} } > \br{1-\epsilon -4\delta - \frac{3\sigma \norm{W}}{n} }n - h},
\end{align*}
where in the last inequality, we use $\min_{0\leq k\leq  \lceil n\epsilon/h\rceil} s_k \geq n-(n\epsilon/h + 1)h = (1-\epsilon)n -h$.
Hence, we have
\begin{align*}
& \ell_m(\hat V^{\BM,m},\hat z^\MLE)\\
 &\leq 72\sum_{0\leq k\leq \lceil n\epsilon/h\rceil} \br{ \frac{1}{n}\sum_{j\in[n]}  \indic{\sigma \abs{ W_{j\cdot} z_{s_k}^{(-j)} } > \br{1-\epsilon -4\delta - \frac{3\sigma \norm{W}}{n} }n - h} }  +  \frac{72h^2}{\delta^2 n^2}\indic{h>\delta\sqrt{n}}.
\end{align*}
\end{proof}

\section{Auxiliary Lemmas and Proofs}\label{sec:lemmas}
The following lemma is a generalization of Lemma 11 of \cite{gao2022sdp}.
\begin{lemma}\label{lem:loss}
Consider any $m\in\mathbb{N}\setminus\{1\}$. For any $V\in\mathcal{V}_m$ and any $z\in\mathc_1^n$, we have 
\begin{align*}
\frac{1}{n^2}\fnorm{V^\H V - zz^\H}^2 \leq 2\ell_m(V,z).
\end{align*}
\end{lemma}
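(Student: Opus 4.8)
The plan is to express both sides of the inequality in terms of the single nonnegative scalar $\norm{Vz}$, which collapses the statement to elementary algebra. First I would derive an exact formula for the loss $\ell_m(V,z)$. Since every column of $V$ has unit norm, $\fnorm{V}^2=n$; since $z\in\mathc_1^n$ we have $\norm{z}^2=n$, hence $\fnorm{az^\H}^2=\norm{a}^2\norm{z}^2=n$ for every unit vector $a\in\mathc^m$. Expanding the square in (\ref{eqn:lm_def}) and using $\iprod{V}{az^\H}=(Vz)^\H a$ gives $\fnorm{V-az^\H}^2 = 2n - 2\re\big((Vz)^\H a\big)$. Because $\norm{Vz}\le \sum_{j\in[n]}\abs{z_j}\norm{V_j}=n$, the minimum over $\norm{a}=1$ is attained (at $a=Vz/\norm{Vz}$ when $Vz\neq 0$, with value $2n$ when $Vz=0$), so in all cases
\begin{align*}
\ell_m(V,z)\;=\;2-\frac{2\norm{Vz}}{n},\qquad\text{i.e.}\qquad \norm{Vz}\;=\;n\Big(1-\tfrac12\,\ell_m(V,z)\Big).
\end{align*}

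Next I would expand the left-hand side directly. Since $V^\H V$ and $zz^\H$ are both Hermitian,
\begin{align*}
\fnorm{V^\H V-zz^\H}^2 &= \fnorm{V^\H V}^2 - 2\,\Tr\big(V^\H V\, zz^\H\big) + \fnorm{zz^\H}^2\\
&= \fnorm{V^\H V}^2 - 2\norm{Vz}^2 + n^2,
\end{align*}
using $\Tr(V^\H V zz^\H)=z^\H V^\H V z=\norm{Vz}^2$ and $\fnorm{zz^\H}^2=(z^\H z)^2=n^2$. The only remaining ingredient is an upper bound on $\fnorm{V^\H V}^2$: denoting by $\lambda_1,\dots,\lambda_n\ge 0$ the eigenvalues of $V^\H V\succeq 0$, one has $\sum_i\lambda_i=\Tr(V^\H V)=\sum_{j\in[n]}\norm{V_j}^2=n$, whence $\fnorm{V^\H V}^2=\sum_i\lambda_i^2\le\big(\sum_i\lambda_i\big)^2=n^2$ (alternatively $\fnorm{V^\H V}^2=\sum_{j,k}\abs{\iprod{V_j}{V_k}}^2\le\sum_{j,k}\norm{V_j}^2\norm{V_k}^2=n^2$ by Cauchy--Schwarz). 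Substituting this bound and then $\norm{Vz}=n(1-\tfrac12\ell_m(V,z))$ yields
\begin{align*}
\fnorm{V^\H V-zz^\H}^2 &\le 2n^2 - 2\norm{Vz}^2 = 2n^2\Big(1-\big(1-\tfrac12\ell_m(V,z)\big)^2\Big)\\
&= 2n^2\Big(\ell_m(V,z)-\tfrac14\ell_m(V,z)^2\Big)\le 2n^2\,\ell_m(V,z),
\end{align*}
since $\ell_m(V,z)\ge 0$; dividing by $n^2$ finishes the proof.

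I do not expect a genuine obstacle here. The only point needing a moment's care is the degenerate case $Vz=0$, but the formula $\ell_m(V,z)=2-2\norm{Vz}/n$ and every subsequent inequality remain valid there, so no separate treatment is required. It is worth noting why the argument should pass through the exact identities above rather than a triangle-inequality estimate: bounding $\fnorm{V^\H V-zz^\H}\le \opnorm{V}\fnorm{V-az^\H}+\fnorm{V-az^\H}\,\opnorm{az^\H}\le 2\sqrt n\,\fnorm{V-az^\H}$ would only deliver the constant $4$, whereas expanding the squares and inserting $\ell_m(V,z)=2-2\norm{Vz}/n$ recovers the sharp constant $2$ in the statement.
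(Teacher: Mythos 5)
Your proof is correct and follows essentially the same route as the paper's: both derive the exact identity $\ell_m(V,z)=2\bigl(1-\tfrac{1}{n}\norm{Vz}\bigr)$, both show $n^{-2}\fnorm{V^\H V - zz^\H}^2 \le 2\bigl(1-\tfrac{1}{n^2}\norm{Vz}^2\bigr)$ by replacing $\fnorm{V^\H V}^2$ (equivalently, $\sum_{j,l}\abs{V_j^\H V_l}^2$) with $n^2$, and both finish with the same algebraic step $2(1-w)(1+w)\le 4(1-w)$. The only difference is presentational — you work in aggregate matrix form while the paper expands the Frobenius norm entrywise — and you have already noted that the two bounds on $\fnorm{V^\H V}^2$ coincide.
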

\begin{proof}
Lemma 11 of \cite{gao2022sdp} only considers the case where $m=n$. However, its proof holds for any $m\geq 2$, which we include here for completeness. By definition, we have
\begin{align*}
&\ell_m(V,z) = 2-\max_{a\in\mathbb{C}^n:\|a\|^2=1}\Bigg(a^{\H}\left(\frac{1}{n}\sum_{j=1}^n{z}_jV_j\right)  +\left(\frac{1}{n}\sum_{j=1}^n{z}_jV_j\right)^{\H}a\Bigg)= 2\left(1-\left\|\frac{1}{n}\sum_{j=1}^n{z}_jV_j\right\|\right).
\end{align*}
In addition, we have
\begin{eqnarray*}
n^{-2}\fnorm{{V}^{\H}V-zz^{\H}}^2 &=& \frac{1}{n^2}\sum_{j=1}^n\sum_{l=1}^n|{V}_j^{\H}{V}_l-z_j\bar{z}_l|^2 \\
& \leq  & \frac{1}{n^2}\sum_{j=1}^n\sum_{l=1}^n\left(2-{V}_j^{\H}{V}_l\bar{z}_jz_l-{V}_l^{\H}{V}_jz_j\bar{z}_l\right) \\
&=& 2\left(1-\left\|\frac{1}{n}\sum_{j=1}^n{z}_jV_j\right\|^2\right).
\end{eqnarray*}
Therefore, $n^{-2}\fnorm{{V}^{\H}V-zz^{\H}}^2\leq \ell_m(V,z)\left(2-\frac{1}{2}\ell_m(V,z)\right)\leq 2\ell_m(V,z)$, and the proof is complete.
\end{proof}

\begin{proof}[Proof of Lemma \ref{lem:lm_contraction}] We follow the proof of Lemma 12 of \cite{zhong2018near}.
We first decompose $V$ and $z$ into orthogonal components:
\begin{align}\label{eqn:lm_contraction_proof_0}
V = a(z^*)^\H + \sqrt{n}A \text{ and }z=bz^* + \sqrt{n}\beta,
\end{align}
where $a\in\mathc^m, A\in\mathc^{m\times n},b\in\mathc,\beta\in\mathc^n$ and $Az^* = 0,\beta^\H z^*=0$. Note the decomposition on $V$ is always possible as $V = V z^* (z^*)^\H + V(I_n - z^* (z^*)^\H)$ and $a= V z^*, \sqrt{n}A = V(I_n - z^* (z^*)^\H)$. By the definition of the loss $\ell_m$ in (\ref{eqn:lm_def}), there exists some $d\in\mathc^m$ such $\norm{d}=1$ and $\ell_m(V,z) =n^{-1}\fnorm{V - dz^\H}^2$. 
With the decomposition (\ref{eqn:lm_contraction_proof_0}), it means
\begin{align*}
n\ell_m(V,z)  &=\fnorm{V - dz^\H}^2\\
&= \fnorm{\br{a(z^*)^\H + \sqrt{n}A} - d\br{bz^* + \sqrt{n}\beta}^\H}^2\\
&= \fnorm{\br{a - d\bar{b}}(z^*)^\H + \sqrt{n}(A - d\beta^\H)}^2\\
&=  \fnorm{\br{a - d\bar{b}}(z^*)^\H}^2 + \fnorm{\sqrt{n}(A - d\beta^\H)}^2\\
&= n \norm{a-d\bar{b}}^2 + n \fnorm{A - d\beta^\H}^2. \numberthis \label{eqn:lm_contraction_proof_3}
\end{align*}
where the third equation is due to the orthogonality $(A - d\beta^H) z^* =0$. Then
\begin{align}\label{eqn:lm_contraction_proof_4}
 \fnorm{A - d\beta^\H} &\leq \sqrt{\ell_m(V,z)}.
\end{align}
We also have 
\begin{align*}
 \fnorm{V Y^\H - d(Yz)^\H}&= \fnorm{V(z^*(z^*)^\H + \sigma W)^\H - dz^\H (z^*(z^*)^\H + \sigma W)^\H}\\
 &\leq \fnorm{(V - dz^\H)z^*(z^*)^\H} + \fnorm{\sigma (V - dz^\H) W}\\
 &\leq  \fnorm{(a(z^*)^\H - d\bar{b}(z^*)^\H)z^*(z^*)^\H} + \sigma \norm{W}\fnorm{V - dz^\H}\\
 &\leq n\sqrt{n}\norm{a - d\bar{b}}+ \sigma \norm{W}\sqrt{n}\sqrt{\ell_m(V,z)}, \numberthis \label{eqn:lm_contraction_proof_2}
\end{align*}
where the second inequality is due to the fact that $\fnorm{B_1B_2}\leq \fnorm{B_1}\opnorm{B_2}$ for any two matrices $B_1,B_2$.
If
\begin{align}\label{eqn:lm_contraction_proof_1}
\norm{a - d\bar{b}}\leq 6\epsilon\fnorm{A - d\beta^\H}
\end{align}
holds, (\ref{eqn:lm_contraction_proof_2}) and (\ref{eqn:lm_contraction_proof_4}) leads to
\begin{align*}
\ell_m(V Y^\H, Yz) &\leq \frac{1}{n} \fnorm{V Y^\H - d(Yz)^\H}^2\\
&\leq \frac{1}{n}\br{ 6\epsilon n \sqrt{n} \fnorm{A - d\beta^\H} +  \sigma \norm{W}\sqrt{n}\sqrt{\ell_m(V,z)}}^2\\
&\leq  \frac{1}{n}\br{ 6\epsilon n\sqrt{n} \sqrt{\ell_m(V,z)} +  \sigma \norm{W}\sqrt{n}\sqrt{\ell_m(V,z)}}^2\\
&= n^2\br{6\epsilon  + \frac{\sigma \norm{W}}{n}}^2 \ell_m(V,z),
\end{align*}
which yields the desired result. The remaining proof is devoted to establishing (\ref{eqn:lm_contraction_proof_1}).

Note that
\begin{align*}
\ell_m(V,z^*)& =\min_{u\in\mathc^m:\norm{u}=1}n^{-1}\fnorm{a(z^*)^\H + \sqrt{n}A - u(z^*)^\H}^2 \\
& = \min_{u\in\mathc^m:\norm{u}=1}n^{-1} \br{\fnorm{(a-u)(z^*)^\H}^2 +  \fnorm{ \sqrt{n}A}^2} \\
&=  \min_{u\in\mathc^m:\norm{u}=1} \norm{a-u}^2 + \fnorm{A}^2.
\end{align*}
Since $\ell_m(V,z^*)\leq \epsilon^2<1/4$, we have $ \fnorm{A}^2 \leq \epsilon^2$, $\norm{a}\neq 0$ and $\min_{u\in\mathc^m:\norm{u}=1} \norm{a-u}^2 = \norm{a - a/\norm{a}}^2 = (1-\norm{a})^2$. Together with $1 = n^{-1}\fnorm{V}^2  =  n^{-1}\norm{a(z^*)^\H}^2 +  n^{-1}\fnorm{\sqrt{n}A}^2 = \norm{a}^2 + \fnorm{A}^2$, we have 
\begin{align*}
\ell_m(V,z^*)& = (1-\norm{a})^2 + 1 - \norm{a}^2 = 2- 2\norm{a}.
\end{align*}
Then $\ell_m(V,z^*)\leq \epsilon^2$ leads to $1\geq \norm{a}\geq 1-\epsilon^2/2$. Similarly for $z$, we have $\norm{\beta}^2\leq \epsilon^2$, $1\geq \abs{b}\geq 1-\epsilon^2/2$ and $1=|b|^2 + \norm{\beta}^2$. Since $\epsilon<1/2$, we have $\norm{a}+|b|>1$, and consequently $\abs{\norm{a} - |b|} \leq \abs{\norm{a}- |b|}(\norm{a}+|b|) = |{\norm{a}^2- |b|^2}|$. Since $\norm{a}^2 + \fnorm{A}^2 = |b|^2 + \norm{\beta}^2$, we have $|\norm{a}^2 - |b|^2| =  |\norm{\beta}^2 - \fnorm{A}^2| $. Together with  $\fnorm{A}^2,\norm{\beta}^2\leq \epsilon^2$, we have
\begin{align*}
\abs{\norm{a} - |b|} &\leq  |\norm{\beta}^2 - \fnorm{A}^2| =  |\norm{\beta} - \fnorm{A}|\br{\norm{\beta} + \fnorm{A}} \\
& \leq 2\epsilon |\norm{\beta} - \fnorm{A}|\leq 2\epsilon \fnorm{A - d\beta^\H}. \numberthis\label{eqn:lm_contraction_proof_5}
\end{align*}
Note that
\begin{align*}
\norm{a-d\bar{b}} &= \norm{a - \frac{a}{\norm{a}} |b| +  \frac{a}{\norm{a}} \frac{b}{|b|} \bar{b} - d \bar{b}}\\
&\leq \norm{a - \frac{a}{\norm{a}} |b|} +\norm{\br{ \frac{a}{\norm{a}} \frac{b}{|b|} - d }\bar{b}}\\
&= \abs{\norm{a} - |b|} + \norm{\frac{a}{\norm{a}} \frac{b}{|b|}-d} |b|\\
&\leq 2\epsilon \fnorm{A - d\beta^\H} +  \norm{\frac{a}{\norm{a}} \frac{b}{|b|}-d},
\end{align*}
where in the last inequality we use $|b|\leq 1$. Hence, to establish (\ref{eqn:lm_contraction_proof_1}), we only need to show
\begin{align}\label{eqn:lm_contraction_proof_6}
\norm{\frac{a}{\norm{a}} \frac{b}{|b|}-d} \leq 4\epsilon \fnorm{A - d\beta^\H}.
\end{align}

To prove (\ref{eqn:lm_contraction_proof_6}), define $d_0 = \frac{a}{\norm{a}}\frac{{b}}{|b|}\in\mathc^m$. Then $\norm{d_0}=1$.  Similar to (\ref{eqn:lm_contraction_proof_3}), we have $\fnorm{V -d_0z^\H}^2 = n\norm{a-d_0\bar b}^2 + n\fnorm{A - d_0\beta^\H}^2$. By the definition of $d$, $\fnorm{V -d z^\H}^2\leq \fnorm{V -d_0 z^\H}^2$, which leads to
 \begin{align*}
\norm{a-d\bar{b}}^2 +  \fnorm{A - d\beta^\H}^2 \leq  \norm{a-d_0\bar b}^2 + \fnorm{A - d_0\beta^\H}^2.
 \end{align*}
 Note that $d_0 \bar b = a \frac{|b|}{\norm{a}}$ is proportional to $a$ and $\norm{d_0\bar b} =\norm{d\bar b} = |b|$. Let $\theta\in[0,\pi]$ be the angle between $a$ and $d\bar b$ in $\mathc^m$. By the cosine formula of triangles, we have 
\begin{align*}
&\norm{a -d \bar{b}}^2 = \norm{a}^2 + \norm{d\bar b}^2 - 2\norm{a}|d\bar b|\cos(\theta) = \norm{a}^2  +|b|^2 -2\norm{a}|b|\cos(\theta)  \\
&\norm{a - d_0\bar b}^2  = \norm{a - a \frac{|b|}{\norm{a}}}^2 = \norm{a}^2 +  |b|^2 - 2\norm{a}|b| \\
\text{and }&\norm{d - d_0} ^2 = \norm{d}^2 + \norm{d_0}^2 - 2\norm{d}\norm{d_0} \cos(\theta)= 2 (1-\cos(\theta)). \numberthis \label{eqn:lm_contraction_proof_7}
\end{align*}
 Hence, $\norm{a -d \bar{b}}^2 - \norm{a - d_0\bar b}^2= 2\norm{a}|b|(1 -\cos(\theta))$. By the triangle inequality,  $\fnorm{A - d_0\beta^\H} -  \fnorm{A - d\beta^\H} \leq \fnorm{(d_0 - d)\beta^\H} = \norm{d_0 -d } \norm{\beta}\leq \epsilon \norm{d_0 -d }$ where in the last inequality we use $\norm{\beta}\leq \epsilon$. Then,
 \begin{align*}
 2\norm{a}|b|(1 -\cos(\theta)) &\leq  \fnorm{A - d_0\beta^\H}^2 -   \fnorm{A - d\beta^\H}^2 \\
 &=  \br{\fnorm{A - d_0\beta^\H} -   \fnorm{A - d\beta^\H}}\br{\fnorm{A - d_0\beta^\H} - \fnorm{A - d\beta^\H} + 2  \fnorm{A - d\beta^\H}}\\
 & \leq  \epsilon \norm{d_0 -d } \br{\epsilon \norm{d_0 -d } + 2  \fnorm{A - d\beta^\H}}.
 \end{align*}
 By (\ref{eqn:lm_contraction_proof_7}), it becomes $\norm{a}|b| \norm{d_0-d}^2 \leq \epsilon \norm{d_0 -d } \br{\epsilon \norm{d_0 -d } + 2  \fnorm{A - d\beta^\H}}$, which further leads to
\begin{align*}
(\epsilon^{-1}\norm{a}|b| - \epsilon)\norm{d_0 -d } \leq 2\fnorm{A - d\beta^\H}.
\end{align*}
Since $\norm{a},|b|\geq 1-\epsilon^2/2$, we have $\epsilon^{-1}\norm{a}|b| - \epsilon \geq \epsilon^{-1}(1-\epsilon^2/2)^2 -\epsilon \geq \epsilon^{-1}(1-\epsilon^2)-\epsilon = \epsilon^{-1}(1-2\epsilon^2) >(2\epsilon)^{-1}$ where the last inequality is due to $\epsilon<1/2$. Hence, $(2\epsilon)^{-1}\norm{d_0 -d } \leq 2\fnorm{A - d\beta^\H}$, which establishes (\ref{eqn:lm_contraction_proof_6}). The proof of the lemma is complete.
\end{proof}

\end{document}